\theoremstyle{plain}
\newtheorem{theorem}{Theorem}
\newtheorem{lemma}{Lemma}
\newtheorem{proposition}{Proposition}
\newtheorem{corollary}{Corollary}
\theoremstyle{definition}
\newtheorem{assumption}{Assumption}
\newtheorem{remark}[theorem]{Remark}
\newcommand\naturals{\mathbb{N}}
\newcommand\reals{\mathbb{R}}
\newcommand\complexes{\mathbb{C}}
\newcommand\norm[1]{\left\lVert#1\right\rVert}
\newcommand\ev[2]{\left<#1,#2\right>}
\DeclareMathOperator{\im}{Im}
\DeclareMathOperator{\id}{id}
\newcommand\es{\mathcal{E}}
\newcommand\sss{\mathcal{S}}
\newcommand\des{\es^{\prime}}
\newcommand\despaceomega[1]{\testspaceomega{\des}}
\newcommand\beurou{\ast}
\numberwithin{equation}{section}
\begin{document}

\title[On the space of ultradistributions vanishing at infinity]{On the space of ultradistributions vanishing at infinity}
\author[A. Debrouwere]{Andreas Debrouwere}
\thanks{A. Debrouwere was supported by  FWO-Vlaanderen via the postdoctoral grant 12T0519N}

\author[L. Neyt]{Lenny Neyt}
\thanks{L. Neyt gratefully acknowledges support by Ghent University through the BOF-grant 01J11615.}

\author[J. Vindas]{Jasson Vindas}
\thanks {The work of J. Vindas was supported by Ghent University through the BOF-grants 01J11615 and 01J04017.}

\address{Department of Mathematics: Analysis, Logic and Discrete Mathematics\\ Ghent University\\ Krijgslaan 281\\ 9000 Gent\\ Belgium}
\email{andreas.debrouwere@UGent.be}
\email{lenny.neyt@UGent.be}
\email{jasson.vindas@UGent.be}

\subjclass[2010]{\emph{Primary.} 46F05. \emph{Secondary.} 42B10,  46F12, 81S30.}
\keywords{The space of  ultradistributions vanishing at infinity; the first structure theorem; S-asymptotics; the short-time Fourier transform}

\begin{abstract}

We study the structural and linear topological properties of the space $\dot{\mathcal{B}}^{\prime \beurou}_{\omega}$ of ultradistributions vanishing at infinity (with respect to a weight function $\omega$). Particularly, we show the first structure theorem for $\dot{\mathcal{B}}^{\prime \beurou}_{\omega}$ under weaker hypotheses than were known so far. As an application, we determine the structure of the S-asymptotic behavior of ultradistributions.
\end{abstract}

\maketitle

\section{Introduction}
The space $\mathcal{B}^{\prime}$ of bounded distributions and its subspace $\dot{\mathcal{B}}^{\prime}$ of distributions vanishing at infinity, introduced by Schwartz \cite{Schwartz}, play an important role in the convolution theory for distributions \cite{Ortner-1, Ortner, N-O} and the asymptotic analysis of generalized functions \cite{P-S-V}. 
Their analogues in the setting of ultradistributions were first considered in \cite{cioranescu1992,P-CharBoundedUltradistr} and further studied in  \cite{B-F-G-ultradistrLpgrowth, carmichael2007boundary, dimovski2016, D-P-V2015, N-V-asympboundMAE}. In \cite{dimovski2016}, the second structure theorem for these spaces (and their weighted variants) was shown by means of the parametrix method. This technique imposes heavy restrictions on the defining weight sequence, namely, the assumptions \cite{ultradistributions1} $(M.1)$, $(M.2)$, and $(M.3)$. The last two named authors have recently provided in \cite{N-V-asympboundMAE} the first structure theorem for the space of bounded ultradistributions (with respect to a weight function $\omega$)  under the weaker assumptions $(M.1)$, $(M.2)'$, and $(M.3)'$.

 The main goal of this article is to show the first structure theorem for the space of ultradistributions vanishing at infinity (with respect to a weight function $\omega$). More precisely, we shall prove the following result; we refer to Sections \ref{sect-prelim} and \ref{sect-struct} for the definition of unexplained notions.

\begin{theorem}
	\label{t:structuraltheorem}
	Let $M_{p}$ be a weight sequence satisfying $(M.1)$ and $(M.2)'$, and let $\omega$ be a weight function such that Assumption \ref{assumption} holds (cf. Section \ref{sect-struct}). Then, for every $f \in \dot{\mathcal{B}}^{\prime \ast}_\omega$ there exist $f_{\alpha} \in C(\reals^{d})$, $\alpha \in \naturals^{d}$, such that
	$$
					f = \sum_{\alpha \in \naturals^{d}} f_{\alpha}^{(\alpha)},
	$$			
 the limits
	$$
	\lim_{|x| \rightarrow \infty} \frac{f_{\alpha}(x)}{\omega(x)} = 0 , \qquad \forall \alpha \in \naturals^{d} , 
$$			
	 hold, and  for some $\ell > 0$ (for all $\ell > 0$) we have that
	 $$
					\sup_{\alpha \in \naturals^d} \sup_{x \in \reals^d} \frac{\ell^{|\alpha|}{M_{\alpha} |f_{\alpha}(x)|}}{\omega(x)} < \infty.
$$
				\end{theorem}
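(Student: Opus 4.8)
The plan is to obtain the structure theorem by transporting a known growth-type characterization of $\dot{\mathcal{B}}^{\prime \ast}_\omega$ through the short-time Fourier transform (STFT), following the philosophy already used by the last two authors in \cite{N-V-asympboundMAE} for the bounded case. Concretely, one first characterizes membership $f \in \dot{\mathcal{B}}^{\prime \ast}_\omega$ by a decay condition on the STFT $V_\psi f(x,\xi)$: the bounded space $\mathcal{B}^{\prime \ast}_\omega$ should correspond to $|V_\psi f(x,\xi)| \leq C \omega(x) e^{-M(\ell |\xi|)}$ (with the usual $\ast$-convention on the quantifiers over $\ell$), and the subspace $\dot{\mathcal{B}}^{\prime \ast}_\omega$ of ultradistributions vanishing at infinity to the additional requirement that, after dividing by $\omega(x)$, this quantity tends to $0$ as $|x| \to \infty$, locally uniformly in $\xi$ against the weight $e^{-M(\ell|\xi|)}$. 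This STFT characterization is exactly the bridge that replaces the parametrix method and allows one to drop $(M.3)$ in favor of $(M.2)'$.

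First I would fix a suitable window $\psi$ in the test space and use the inversion/desingularization formula $f = \iint V_\psi f(x,\xi)\, M_{x,\xi}T_x\psi \, dx\, d\xi$ (the reconstruction identity for the STFT) to write $f$ as a superposition of time-frequency shifts of $\psi$. The key technical device is then to discretize the frequency integral and expand the modulation factor, turning the $\xi$-integration into a sum over $\alpha \in \naturals^d$ with the weight $e^{-M(\ell|\xi|)}$ converting, via the definition of the associated function $M$ and the sequence $(M_p)$, into the factors $\ell^{|\alpha|} M_\alpha$ appearing in the statement. Each resulting coefficient function $f_\alpha(x)$ is built by integrating $V_\psi f(x,\xi)$ against an appropriate kernel in $\xi$; the continuity $f_\alpha \in C(\reals^d)$ is immediate from the continuity of the STFT and dominated convergence.

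The growth and vanishing conditions on the $f_\alpha$ then follow by inserting the two parts of the STFT characterization. The uniform bound $\sup_\alpha \sup_x \ell^{|\alpha|} M_\alpha |f_\alpha(x)| / \omega(x) < \infty$ comes from the estimate $|V_\psi f(x,\xi)| \leq C\omega(x) e^{-M(\ell|\xi|)}$ after summing the geometric-type series produced by the expansion, where $(M.1)$ (logarithmic convexity) and $(M.2)'$ are used to control the interplay between $M_\alpha$, the window derivatives, and the associated function; the $\ast$-convention on $\ell$ is preserved throughout. The limits $f_\alpha(x)/\omega(x) \to 0$ are obtained from the corresponding vanishing part of the STFT characterization, by splitting the defining $\xi$-integral into a large-frequency tail (small uniformly in $x$ by the decay in $\xi$) and a bounded-frequency part (small for $|x|$ large by the vanishing at infinity of $V_\psi f(x,\xi)/\omega(x)$), and combining the two; the uniformity in $\alpha$ is not needed for the pointwise limits but is compatible with the summability already established.

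The hard part will be establishing the STFT characterization of $\dot{\mathcal{B}}^{\prime \ast}_\omega$ itself — in particular the \emph{vanishing at infinity} clause — under the weakened hypotheses $(M.1)$ and $(M.2)'$ without recourse to $(M.3)$; this is where the absence of a parametrix must be compensated by careful time-frequency estimates, and where one must verify that the weight $\omega$ (subject to Assumption \ref{assumption}) interacts correctly with the translations $T_x$ appearing in the reconstruction formula. Once that characterization is in hand, the passage to the series $\sum_\alpha f_\alpha^{(\alpha)}$ and the verification of the three displayed properties is essentially a bookkeeping argument organized around the expansion of the modulation factor and the convexity estimates for $(M_p)$.
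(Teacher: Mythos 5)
Your overall strategy---reconstruct $f$ from $V_\psi f$ and convert the frequency integral into the series $\sum_\alpha f_\alpha^{(\alpha)}$---is not the route the paper takes, and the step you dismiss as ``essentially a bookkeeping argument'' is precisely where the argument breaks down under the stated hypotheses. Any direct expansion of the modulation factor (by an ultrapolynomial $\sum_\alpha c_\alpha\xi^\alpha$ dominating $e^{M(\ell|\xi|)}$, by a decomposition of $\reals^d_\xi$ into the regions where a given $\alpha$ realizes the supremum defining $M$, or by a parametrix) forces you, after moving the derivatives $\partial_t^{\alpha}$ off the product $e^{2\pi i\xi\cdot t}\gamma(t-x)$ by Leibniz's rule, to control sums of terms involving products such as $M_{\alpha-\delta}M_{\alpha}$ or $M_{2\beta}$ against $M_\beta^{2}$ and the target bound $\ell^{|\delta|}M_\delta$. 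Making those sums converge requires an estimate of the form $M_{p+q}\leq CH^{p+q}M_pM_q$, i.e.\ condition $(M.2)$, which does not follow from $(M.1)$ and $(M.2)'$: for $M_p=2^{p^2}$, which satisfies both and falls under Assumption \ref{assumption}, one has $M_{2p}/M_p^2=2^{2p^2}$, so the series produced by your expansion diverges. This is exactly the obstruction the introduction refers to when it says that none of the existing methods applies; your phrase ``$(M.1)$ and $(M.2)'$ are used to control the interplay between $M_\alpha$, the window derivatives, and the associated function'' conceals rather than closes this gap.

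The paper's actual proof is non-constructive. It restates the theorem as the surjectivity of $S:\Lambda_{\beurou}(C_{0,\omega}(\reals^d))\to\dot{\mathcal{B}}^{\prime\beurou}_\omega$, $(f_\alpha)_{\alpha}\mapsto\sum_\alpha f_\alpha^{(\alpha)}$ (Theorem \ref{t:structureasmap}) and verifies the transpose criterion of Lemma \ref{t:surjectivity}: one computes $(\dot{\mathcal{B}}^{\prime\beurou}_\omega)'_b\cong\mathcal{D}^{\beurou}_{L^1_\omega}$ (Proposition \ref{p:dualBdot}), identifies $S^t$ with $\varphi\mapsto((-1)^{|\alpha|}\varphi^{(\alpha)})_{\alpha}$, and checks injectivity and weak closedness of its range together with the Mackey property of $\im S$. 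The STFT enters only where you expected things to be routine, namely in characterizing $\mathcal{D}^{\beurou}_{L^1_\omega}$ and $\dot{\mathcal{B}}^{\prime\beurou}_\omega$ (Proposition \ref{p:Dl1STFTchar} and Theorem \ref{t:Bdotequiv}) so as to establish that duality; it is never used to build the $f_\alpha$ explicitly. So the part you single out as hard is indeed proved and needed, but as an ingredient of a duality argument, not as the input to a constructive decomposition; to salvage your approach you would have to produce a frequency-side expansion whose coefficients avoid the $M_{p+q}$ versus $M_pM_q$ comparison altogether, and no such expansion is currently known.
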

If $\omega$ is a weight function satisfying 
$$
\operatorname*{ess \: sup}_{x \in \reals^d} \frac{\omega(\:\cdot\: + x)}{ \omega(x)} \in L^\infty_{\operatorname{loc}},
$$
then Assumption \ref{assumption} holds for $M_p$ and $\omega$, where $M_p$ is any log-convex weight sequence  satisfying $\lim_{p\to\infty} M_p^{-1/p}\log p=0$; see Remark \ref{remark-1}. In particular, in this case the Assumption \ref{assumption} is fulfilled when $p!^{\sigma}\subset M_p$ for some $\sigma>0$.

It is important to point out that none of the methods available in the literature applies to deliver a proof for Theorem \ref{t:structuraltheorem}. We develop here a new approach to the problem whose core consists in combining a criterion for the surjectivity of a continuous linear mapping in terms of its transpose (Lemma \ref{t:surjectivity}) with the computation of the dual of $\dot{\mathcal{B}}'^*_{\omega}$.  The latter computation is achieved in this article by exploiting the mapping properties of the short-time Fourier transform (STFT). In fact, we shall show that the strong dual of $\dot{\mathcal{B}}^{\prime \ast}_{\omega}$ is given by $\mathcal{D}^{\beurou}_{L^{1}_{\omega}}$.  We mention that the STFT has recently proved to be a powerful tool in the study of the structural and linear topological properties of (generalized) function spaces; see \cite{B-O2014, D-V-indlimultra, D-V-ConvSTFT, K-P-S-V2016,  N-VMultDimTaub}.

As an application of Theorem \ref{t:structuraltheorem}, we determine the structure of the S-asymptotic behavior of ultradistributions. Theorem \ref{t:structSasymp} below may be interpreted as the first structure theorem for S-asymptotics, whereas \cite[Theorem 1.10, p.\ 46]{P-S-V} may be seen as the second structure theorem for S-asymptotics. As a consequence, we obtain that all results from \cite{structquasiultra} essentially hold under the weaker assumptions $(M.1)$, $(M.2)'$, and $(M.3)'$ on the defining weight sequence; see Remark \ref{remark-2}.

This paper is organized as follows. In the preliminary Section \ref{sect-prelim}, we  first present a criterion for the surjectivity of a continuous linear mapping in terms of its transpose, after which we introduce Gelfand-Shilov spaces and their duals, and briefly discuss the mapping properties of the STFT on these spaces. In Section \ref{sect-struct}, we define $\mathcal{D}^{\beurou}_{L^{1}_{\omega}}$ and $\dot{\mathcal{B}}^{\prime \beurou}_{\omega}$ and characterize these spaces via the STFT. The equality $(\dot{\mathcal{B}}^{\prime \ast}_\omega)'_b = \mathcal{D}^{\beurou}_{L^{1}_{\omega}}$  and Theorem \ref{t:structuraltheorem} are shown in Section \ref{sect-main}. Finally, in Section \ref{sect-strucS}, we present our results about the S-asymptotic behavior of ultradistributions. 
\section{Preliminaries}\label{sect-prelim}
Given a lcHs (= locally convex Hausdorff space) $E$, we denote its dual by $E'$. Unless explicitly stated otherwise, we endow $E'$ with the strong topology. 
\subsection{Surjections in locally convex spaces} A continuous linear mapping between Fr\'echet spaces is surjective if and only if its transpose is injective and has weakly closed range  \cite[Theorem 37.2, p.\ 382]{Treves}. In the proof of Theorem \ref{t:structuraltheorem}, we will make use of the following generalization of this criterion.

\begin{lemma}
	\label{t:surjectivity}
	Let $E$ and $F$ be lcHs and let $S: E \rightarrow F$ be a continuous linear mapping. Suppose that $E$ is Mackey, $E / \ker S$ is complete, and $\im S$ is Mackey for the topology induced by $F$. Then, $S$ is surjective if the following two conditions are satisfied:
		\begin{enumerate}
			\item $S^{t} : F' \rightarrow E'$ is injective;
			\item $\im S^{t}$ is weakly closed in $E'$.
		\end{enumerate}
\end{lemma}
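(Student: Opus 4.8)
The plan is to prove surjectivity by showing that $\im S$ is simultaneously dense and closed in $F$. Density is immediate from (1): since $\ker S^{t}$ is exactly the annihilator of $\im S$ in $F'$, the hypothesis $\ker S^{t} = \{0\}$ gives $\overline{\im S} = F$ by the bipolar theorem. The substantial part is to show that $\im S$ is \emph{closed}, and I would obtain this by proving that $\im S$ is \emph{complete}: a complete subspace of the Hausdorff space $F$ is closed, and combined with density this forces $\im S = F$.

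To get a grip on the subspace topology of $\im S$, I would use the canonical factorization $S = j \circ \widehat{S} \circ q$, where $q : E \to E/\ker S$ is the quotient map, $\widehat{S} : E/\ker S \to \im S$ is the induced continuous bijection (with $\im S$ carrying the topology induced by $F$), and $j : \im S \hookrightarrow F$ is the inclusion. The whole proof then reduces to showing that $\widehat{S}$ is a \emph{topological isomorphism}: granting this, the completeness of $E/\ker S$ assumed in the hypotheses transfers across $\widehat{S}$ to $\im S$, which completes the argument. The conditions (2) and the two Mackey hypotheses are precisely what is needed to establish this isomorphism property.

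The first step towards the isomorphism property is to translate (2) into a statement about transposes. Since the weak closure of $\im S^{t}$ always equals $(\ker S)^{\perp}$, condition (2) is equivalent to $\im S^{t} = (\ker S)^{\perp}$. Now transpose the factorization: under the identification $(E/\ker S)' = (\ker S)^{\perp}$, the map $q^{t}$ is an isomorphism onto $(\ker S)^{\perp}$, while $j^{t} : F' \to (\im S)'$ is \emph{surjective} by the Hahn--Banach theorem, because every continuous functional on the subspace $\im S$ extends to $F$. Hence $\im S^{t} = q^{t}(\im \widehat{S}^{t})$, and so (2) is equivalent to the surjectivity of $\widehat{S}^{t} : (\im S)' \to (E/\ker S)'$. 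As $\widehat{S}$ is a bijection, $\widehat{S}^{t}$ is also injective, so $\widehat{S}^{t}$ is bijective; writing each $\phi \in (E/\ker S)'$ as $\phi = \psi \circ \widehat{S}$ with $\psi \in (\im S)'$ gives $\phi \circ (\widehat{S})^{-1} = \psi \in (\im S)'$, which shows that $(\widehat{S})^{-1}$ is weakly continuous. Thus $\widehat{S}$ is an isomorphism for the weak topologies.

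Finally I would upgrade this weak isomorphism to a topological one via the Mackey hypotheses. A quotient of a Mackey space is Mackey, so $E/\ker S$ is Mackey; and $\im S$ is Mackey by assumption. Since a weakly continuous linear map between lcHs is automatically continuous for the Mackey topologies of its domain and codomain, the weakly continuous inverse $(\widehat{S})^{-1}$ is continuous (both spaces carrying their Mackey topologies), while $\widehat{S}$ is continuous by construction; hence $\widehat{S}$ is a topological isomorphism, and the proof concludes as described. I expect the main obstacle to lie in the transpose bookkeeping of the third paragraph --- correctly identifying $\im S^{t}$ with $(\ker S)^{\perp}$ and extracting the weak continuity of $(\widehat{S})^{-1}$ --- rather than in the concluding Mackey upgrade, which is a routine application of the Mackey--Arens machinery.
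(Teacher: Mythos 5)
Your proof is correct and follows essentially the same route as the paper: density from injectivity of $S^{t}$, then closedness of $\im S$ via completeness, obtained by showing the induced bijection $E/\ker S \to \im S$ is first a weak isomorphism (using condition (2)) and then a topological isomorphism via the Mackey hypotheses. The only difference is cosmetic: where the paper cites Tr\`eves' Lemma 37.4 to conclude that $S$ is a weak homomorphism, you carry out the transpose bookkeeping ($\im S^{t} = (\ker S)^{\perp}$ iff $\widehat{S}^{t}$ is surjective, whence $(\widehat{S})^{-1}$ is weakly continuous) explicitly, which is a valid and self-contained substitute.
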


\begin{proof}
If $S^{t}$ is injective, then $\im S$ is dense in $F$. Hence, it suffices to show that $\im S$ is closed in $F$. As $\im S^{t}$ is weakly closed,  $S$ is a weak  homomorphism \cite[Lemma 37.4]{Treves}. Since $\sigma(E / \ker S, (E / \ker S)')$ coincides with the quotient of $\sigma(E,E')$ modulo $\ker S$ \cite[p.\ 385]{Treves} and $\sigma(\im S, (\im S)')$ coincides with the topology induced by $\sigma(F,F')$, we obtain that $\widetilde{S} : E / \ker S \rightarrow \im S$ is a weak isomorphism. Consequently, $\widetilde{S}$ is also an isomorphism if we equip  $E / \ker S$  and $\im S$ with their Mackey topology \cite[p.\ 158]{S-TVS}. From this we may infer that $S$ is a homomorphism because $E / \ker S$ is Mackey as $E$ is so \cite[p.\ 136]{S-TVS} and $\im S$ is Mackey by assumption. Finally, since $E / \ker S$ is complete, we have that $\im S \cong E / \ker S$ is complete and, thus, closed in $F$.

\end{proof}

\subsection{Gelfand-Shilov spaces and the short-time Fourier transform} A sequence $(M_{p})_{p \in \naturals}$ of positive real numbers is called a \emph{weight sequence} if $M_p/M_{p-1} \to \infty$ as $p \to \infty$. We will make use of some of the following conditions on weight sequences:
	\begin{description}
		\item[$(M.1)$] $M_{p}^{2} \leq M_{p - 1} M_{p + 1}$ , $p \geq 1$; 
		\item[$(M.2)'$] $M_{p + 1} \leq C_0 H^{p} M_{p}$, $p \in \naturals$, for some $C_0,H \geq 1$;
		 \item[$(M.3)'$] $\sum_{p = 1}^{\infty} M_{p-1}/M_p < \infty$.
	\end{description}
The reader is referred to \cite{ultradistributions1} for the meaning of these conditions. For a multi-index $\alpha \in \naturals^{d}$, we simply write $M_{\alpha} = M_{|\alpha|}$. As usual, the relation $M_{p} \subset N_{p}$ between two weight sequences means that there exist $C, \varepsilon > 0$ such that $M_{p} \leq C \varepsilon^{p} N_{p}$ for all $p \in \naturals$. The stronger relation $M_{p} \prec N_{p}$ means that the latter inequality remains valid for all $\varepsilon > 0$ and suitable $C = C_{\varepsilon} > 0$. The \emph{associated function} of $M_{p}$ is defined as
	\[ M(t) = \sup_{p \in \naturals} \log \frac{t^{p} M_{0}}{M_{p}} , \qquad t > 0 , \]
and $M(0) = 0$. We define $M$ on $\reals^{d}$ as the radial function $M(x) = M(|x|)$, $x \in \reals^{d}$. We will often use the following result \cite[Proposition 3.4]{ultradistributions1}: If $M_{p}$ satisfies $(M.1)$ and $(M.2)'$, then, for any $k > 0$,
	\begin{equation}
		\label{eq:M2'}
		M(t) - M(kt) \leq - \frac{\log(t / C_0) \log k}{\log H} , \qquad t > 0 . 
	\end{equation}
	
Let $M_{p}$ and $A_{p}$ be two weight sequences. We denote by $A$ the associated function of $A_{p}$. For $\ell, q > 0$ we denote by $\sss^{M_{p}, \ell}_{A_{p}, q}(\reals^{d})$  the Banach space consisting of all $\varphi \in C^{\infty}(\reals^{d})$ such that
	\[ \norm{\varphi}_{\sss^{M_{p}, \ell}_{A_{p}, q}} := \sup_{\alpha \in \naturals^{d}} \sup_{x \in \reals^{d}} \frac{|\varphi^{(\alpha)}(x)| e^{A(q x)}}{\ell^{|\alpha|} M_{\alpha}} < \infty . \]
We define
	\[ \sss^{(M_{p})}_{(A_{p})}(\reals^{d}) := \varprojlim_{\ell \rightarrow 0^{+}} \sss^{M_{p}, \ell}_{A_{p}, 1 / \ell}(\reals^{d}) , \qquad \sss^{\{M_{p}\}}_{\{A_{p}\}}(\reals^{d}) := \varinjlim_{\ell \rightarrow \infty} \sss^{M_{p}, \ell}_{A_{p}, 1 / \ell}(\reals^{d}) . \]

We shall write $\beurou$ instead of $(M_{p})$ or $\{M_{p}\}$ and $\dagger$ instead of $(A_{p})$ or $\{A_{p}\}$ if we want to treat both cases simultaneously. In addition, we shall often first state assertions for the Beurling case followed in parenthesis by the corresponding statements for the Roumieu case. 

Following \cite{ultradistributions1}, we denote by $\mathcal{E}^\ast(\reals^d)$  the space of ultradifferentiable functions of class $\ast$  and by $\mathcal{D}^\ast(\reals^d)$ the space of compactly supported ultradifferentiable functions of class $\ast$, each of them endowed with their natural locally convex topology. If $M_p$ satisfies $(M.1)$,  $\mathcal{D}^\ast(\reals^d)$ is non-trivial if and only if $M_p$ satisfies $(M.3)'$, as follows from the Denjoy-Carleman theorem. 

Next, we discuss the \emph{short-time Fourier transform} (STFT); see \cite{Grochenig} for an extensive overview. We denote the translation and modulation operators by $T_{x} f(t) = f(t - x)$ and $M_{\xi}f(t)= e^{2 \pi i \xi \cdot t} f(t)$ for $x, \xi \in \reals^{d}$. We also write $\check{f}(t) = f(-t)$ for reflection about the origin. The STFT of a function $f \in L^{2}(\reals^{d})$ with respect to the window $\psi \in L^{2}(\reals^{d})$ is given by
	\[ V_{\psi} f(x, \xi) := (f, M_{\xi} T_{x} \psi)_{L^{2}} = \int_{\reals^{d}} f(t) \overline{\psi(t - x)} e^{- 2 \pi i \xi \cdot t} dt , \qquad (x, \xi) \in \reals^{2d} . \]
It holds that $\norm{V_{\psi}f}_{L^{2}(\reals^{2d})} = \norm{\psi}_{L^{2}} \norm{f}_{L^{2}}$. In particular, $V_{\psi} : L^{2}(\reals^{d}) \rightarrow L^{2}(\reals^{2d})$ is continuous. The adjoint of $V_{\psi}$ is given by the weak integral
	\[ V_{\psi}^{*} F := \int \int_{\reals^{2d}} F(x, \xi) M_{\xi} T_{x} \psi dx d\xi , \qquad F \in L^{2}(\reals^{2d}) . \]
If $\psi \neq 0$ and $\gamma \in L^{2}(\reals^{d})$ is a synthesis window for $\psi$, i.e. $(\gamma, \psi)_{L^{2}} \neq 0$, then
	\begin{equation} 
		\label{eq:reconstructSTFT}
		\frac{1}{(\gamma, \psi)_{L^{2}}} V_{\gamma}^{*} \circ V_{\psi} = \id_{L^{2}(\reals^{d})} . 
	\end{equation}

We now study the mapping properties of the STFT on the spaces $\sss^{(M_{p})}_{(A_{p})}(\reals^{d})$ and $\sss^{\prime (M_{p})}_{(A_{p})}(\reals^{d})$ (cf.\ \cite{D-V-indlimultra}). We need some preparation. 
Given two lcHs $E$ and $F$, we write $E \widehat{\otimes}_{\pi} F$ and $E \widehat{\otimes}_{\varepsilon} F$ for the completion of the tensor product $E \otimes F$ with respect to the projective topology and the $\varepsilon$-topology, respectively. If either $E$ or $F$ is nuclear, we simply write $E \widehat{\otimes} F = E \widehat{\otimes}_{\pi} F = E \widehat{\otimes}_{\varepsilon} F$. Let $N_{p}$ be a weight sequence and denote by $N$ its associated function. We define $\sss_{(N_{p})}(\reals^{d})$ as the Fr\'echet space consisting of all $\varphi \in C^{\infty}(\reals^{d})$ such that
	\[ \norm{\varphi}_{\sss_{N_{p}, k}} := \sup_{|\alpha| \leq k} \sup_{x \in \reals^{d}} |\varphi^{(\alpha)}(x)| e^{N(k x)} < \infty , \qquad \forall k \in \naturals . \]
If $N_p$ satisfies $(M.1)$ and $(M.2)'$, then $\sss_{(N_{p})}(\reals^{d})$ is nuclear, as follows from \cite[p.\ 181]{Gelfand-Shilov3} and \eqref{eq:M2'}. 

Let $M_{p}$ and $A_{p}$ be two weight sequences satisfying $(M.1)$ and $(M.2)'$. We may identify $\sss_{(A_{p})}(\reals^{d}_{x}) \widehat{\otimes} \sss_{(M_{p})}(\reals^{d}_{\xi})$ with the Fr\'echet space consisting of all $\Phi \in C^{\infty}(\reals^{2d}_{x, \xi})$ such that
	\[ |\Phi|_{k} := \max_{|\alpha| \leq k} \max_{|\beta| \leq k} \sup_{(x, \xi) \in \reals^{2d}} \left| \partial^{\alpha}_{x} \partial^{\beta}_{\xi} \Phi(x, \xi) \right| \exp[A(k x) + M(k \xi)] < \infty , \qquad \forall k \in \naturals . \]
The following canonical isomorphism of lcHs holds
	\[ (\sss_{(A_{p})}(\reals^{d}_{x}) \widehat{\otimes} \sss_{(M_{p})}(\reals^{d}_{\xi}))^{\prime} \cong \sss^{\prime}_{(A_{p})}(\reals^{d}_{x}) \widehat{\otimes} \sss^{\prime}_{(M_{p})}(\reals^{d}_{\xi}) . \]
We then have:
	\begin{proposition}
		\label{p:contSTFT}
		Let  $\psi \in \sss^{(M_{p})}_{(A_{p})}(\reals^{d})$. Assume that both weight sequences satisfy $(M.1)$ and $(M.2)'$. The following mappings are continuous,
			\[ V_{\psi} : \sss^{(M_{p})}_{(A_{p})}(\reals^{d}) \rightarrow \sss_{(A_{p})}(\reals^{d}_{x}) \widehat{\otimes} \sss_{(M_{p})}(\reals^{d}_{\xi}) \]
		and
			\[ V_{\psi}^{*} : \sss_{(A_{p})}(\reals^{d}_{x}) \widehat{\otimes} \sss_{(M_{p})}(\reals^{d}_{\xi}) \rightarrow \sss^{(M_{p})}_{(A_{p})}(\reals^{d}) . \]
	\end{proposition}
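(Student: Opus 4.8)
The plan is to establish both continuity assertions by direct estimation against the seminorm descriptions of the spaces. Since $\sss_{(A_{p})}(\reals^{d}_{x}) \widehat{\otimes} \sss_{(M_{p})}(\reals^{d}_{\xi})$ carries the seminorms $|\cdot|_{k}$ and $\sss^{(M_{p})}_{(A_{p})}(\reals^{d}) = \varprojlim_{\ell \to 0^{+}} \sss^{M_{p}, \ell}_{A_{p}, 1/\ell}(\reals^{d})$, continuity of $V_{\psi}$ amounts to bounding, for each $k \in \naturals$, the seminorm $|V_{\psi} f|_{k}$ by some $\norm{f}_{\sss^{M_{p}, \ell}_{A_{p}, 1/\ell}}$, while continuity of $V_{\psi}^{*}$ amounts to bounding each $\norm{V_{\psi}^{*} F}_{\sss^{M_{p}, \ell}_{A_{p}, 1/\ell}}$ by some $|F|_{k}$. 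Throughout I will use that both $f$ and the fixed window $\psi$ satisfy the Gelfand-Shilov bounds $|\varphi^{(\gamma)}(t)| \leq C_{\lambda} \lambda^{|\gamma|} M_{\gamma} e^{-A(t/\lambda)}$ for every $\lambda > 0$, and that $e^{-M(t)} = \inf_{p \in \naturals} M_{p}/(t^{p} M_{0})$ for $t > 0$, directly from the definition of the associated function.

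For $V_{\psi}$, differentiation under the integral sign (legitimate by the rapid decay of the integrand) gives
\[ \partial_{x}^{\alpha} \partial_{\xi}^{\beta} V_{\psi} f(x, \xi) = (-1)^{|\alpha|} (-2\pi i)^{|\beta|} \int_{\reals^{d}} t^{\beta} f(t) \overline{(\partial^{\alpha} \psi)(t - x)} e^{-2\pi i \xi \cdot t}\, dt . \]
To generate the weight $e^{M(k\xi)}$ I would integrate by parts in $t$: for every $\gamma \in \naturals^{d}$, multiplying by $(2\pi i\xi)^{\gamma}$ turns the exponential derivative into $(-\partial_{t})^{\gamma}$ acting on the amplitude $t^{\beta} f(t) \overline{(\partial^{\alpha} \psi)(t - x)}$. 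Expanding by the Leibniz rule and inserting the Gelfand-Shilov bounds for $f$ and $\psi$, one estimates $|\xi|^{|\gamma|} |\partial_{x}^{\alpha} \partial_{\xi}^{\beta} V_{\psi} f(x, \xi)|$ by $C K^{|\gamma|} M_{\gamma}$ uniformly in $\gamma$, where $K$ can be made $\leq 1/k$ by shrinking the source scale (the $(M.2)'$ losses only rescale $K$) and the residual exponential decay in $t$ keeps the integral finite; passing to the infimum over $|\gamma| = p \in \naturals$ then produces the factor $e^{-M(k\xi)}$. The complementary weight $e^{A(kx)}$ comes solely from the decay of the amplitude: from $|x| \leq |t| + |t - x|$ and the elementary inequality $A(k|x|) \leq A(2k|t|) + A(2k|t - x|)$ (valid since $A$ is nonnegative and nondecreasing), the two decays $e^{-A(t/\lambda)}$ and $e^{-A((t-x)/\lambda)}$ absorb $e^{A(kx)}$ while leaving an integrable remainder. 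Collecting the Leibniz contributions by means of $(M.1)$ and $(M.2)'$ yields $|V_{\psi} f|_{k} \leq C \norm{f}_{\sss^{M_{p}, \ell}_{A_{p}, 1/\ell}}$ for a suitable $\ell$.

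For $V_{\psi}^{*}$, from $(V_{\psi}^{*} F)(t) = \int \int_{\reals^{2d}} F(x, \xi) e^{2\pi i \xi \cdot t} \psi(t - x)\, dx\, d\xi$ I would differentiate under the integral and apply the Leibniz rule,
\[ (V_{\psi}^{*} F)^{(\alpha)}(t) = \sum_{\alpha_{1} + \alpha_{2} = \alpha} \binom{\alpha}{\alpha_{1}} (2\pi i)^{|\alpha_{1}|} \int \int_{\reals^{2d}} \xi^{\alpha_{1}} F(x, \xi) e^{2\pi i \xi \cdot t} (\partial^{\alpha_{2}} \psi)(t - x)\, dx\, d\xi . \]
The decay $|F(x, \xi)| \leq |F|_{k}\, e^{-A(kx) - M(k\xi)}$ lets me trade $|\xi^{\alpha_{1}}|$ for $M_{\alpha_{1}}$ through $|\xi|^{|\alpha_{1}|} \leq (M_{\alpha_{1}}/(M_{0} k^{|\alpha_{1}|})) e^{M(k\xi)}$, while the Gelfand-Shilov bound on $\psi$ supplies the factor $M_{\alpha_{2}}$ together with the decay $e^{-A((t-x)/\lambda)}$. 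Using $M_{\alpha_{1}} M_{\alpha_{2}} \leq M_{0} M_{\alpha}$, a consequence of log-convexity $(M.1)$, and absorbing the target weight $e^{A(t/\ell)}$ via $A(|t|/\ell) \leq A(2|x|/\ell) + A(2|t - x|/\ell)$ together with the $x$-decay of $F$, the binomial and scale factors recombine into $\ell^{|\alpha|} M_{\alpha}$ once $k$ is large and $\lambda$ is small relative to $\ell$, giving $\norm{V_{\psi}^{*} F}_{\sss^{M_{p}, \ell}_{A_{p}, 1/\ell}} \leq C |F|_{k}$.

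The main obstacle will be the bookkeeping that turns polynomial factors into the exponential weights of the associated functions while keeping all scale parameters consistent. The delicate point in the estimate for $V_{\psi}$ is to show that the $t$-integration by parts yields a bound of the clean form $C K^{|\gamma|} M_{\gamma}$ \emph{uniformly} in the order $\gamma$, so that passing to the infimum reproduces $e^{-M(k\xi)}$; this is where the products $M_{\gamma_{2}} M_{\alpha + \gamma_{3}}$ arising from Leibniz must be recombined into a single $K^{|\gamma|} M_{\gamma}$, for which $(M.2)'$ (to strip the bounded block $\alpha$, using $|\alpha| \leq k$) and the inequality \eqref{eq:M2'} are indispensable. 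Once these uniform-in-order estimates are secured, the twofold exponential decay of $f$ and $\psi$ (respectively of $F$ and $\psi$) makes the remaining $t$- and $(x,\xi)$-integrations converge routinely, and matching the source scale to the target index completes the argument in both directions.
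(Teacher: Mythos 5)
Your proposal is correct and follows essentially the same route as the paper's proof: for $V_{\psi}$ one differentiates under the integral, multiplies by $\xi^{\gamma}$ and integrates by parts in $t$, applies the Leibniz rule together with the sub-additivity $e^{A(x+y)}\leq e^{A(2x)+A(2y)}$, and recovers $e^{-M(k\xi)}$ from the uniform-in-$\gamma$ bound via the infimum defining the associated function; for $V_{\psi}^{*}$ one likewise differentiates under the integral and trades $|\xi|^{|\beta|}$ for $M_{\beta}e^{M(k\xi)}$ using the decay of $\Phi$. The delicate points you flag (recombining the Leibniz products via $(M.1)$ and stripping the bounded block $\alpha$ via $(M.2)'$) are exactly the steps the paper handles with the norms $\norm{t^{\beta}\varphi}_{\sss^{M_{p},\pi/(k\sqrt{d})}_{A_{p},2k}}$ and $\norm{\psi^{(\alpha)}}_{\sss^{M_{p},\pi/(k\sqrt{d})}_{A_{p},2kH^{d+1}}}$, so nothing further is needed.
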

\begin{proof}
	We first consider $V_{\psi}$. Let $\varphi \in \sss^{(M_{p})}_{(A_{p})}(\reals^{d})$ be arbitrary and fix $k \in \naturals$.  For all $\gamma \in \naturals^d$ we have that
			\begin{align*}
			&\max_{|\alpha|, |\beta| \leq k} \left| \xi^{\gamma} \partial^{\alpha}_{x} \partial^{\beta}_{\xi} V_{\psi} \varphi(x, \xi) \right|e^{A(kx)} \\
			&\leq (2\pi)^{k - |\gamma|} \max_{|\alpha|, |\beta| \leq k} \sum_{\delta \leq \gamma} {\gamma \choose \delta} \int_{\reals^{d}} |(t^{\beta} \varphi(t))^{(\delta)}|e^{A(2kt)} |\psi^{(\alpha + \gamma - \delta)}(x - t)|e^{A(2k(x-t))} dt \\
			&\leq C \max_{|\alpha|, |\beta| \leq k} \norm{t^{\beta} \varphi(t)}_{\sss^{M_{p}, \pi/(k\sqrt{d})}_{A_{p}, 2k}} \norm{\psi^{(\alpha)}}_{\sss^{M_{p}, \pi/(k\sqrt{d})}_{A_{p}, 2kH^{d+1}}}\frac{M_{\gamma}}{(k\sqrt{d})^{|\gamma|}}  \\
			&\leq C' \norm{\varphi}_{\sss^{M_{p}, \pi/(k\sqrt{d})}_{A_{p}, 2kH^k}}  \frac{M_{\gamma}}{(k\sqrt{d})^{|\gamma|}}, 					\end{align*}
so that $|V_{\psi} \varphi|_{k} \leq C'M_0  \norm{\varphi}_{\sss^{M_{p}, \pi/(k\sqrt{d})}_{A_{p}, 2kH^k}}$. Next, we treat $V_{\psi}^{*}$. Take an arbitrary function $\Phi \in \sss_{(A_{p})}(\reals^{d}_{x}) \widehat{\otimes} \sss_{(M_{p})}(\reals^{d}_{\xi})$ and fix $\ell > 0$. Pick $k \in \naturals$ such that $k \ell \geq 4 \pi H^{d + 1}$. For all $\alpha \in \naturals^{d}$ we have that
		\begin{align*}
			& \left| \partial^{\alpha} V_{\psi}^{*} \Phi(t) \right|e^{A(t / \ell)} \\
			&\leq \sum_{\beta \leq \alpha} {\alpha \choose \beta} \int \int_{\reals^{2d}} |\Phi(x, \xi)| |2 \pi \xi|^{|\beta|} e^{A(2x / \ell)} |\psi^{(\alpha - \beta)}(t - x)| e^{A(2(t - x) / \ell)} dx d\xi \\
			&\leq C|\Phi|_k \ell^{|\alpha|} M_{\alpha},
		\end{align*}
whence		 $\| V_{\psi}^{*} \Phi\|_{\mathcal{S}^{M_p,\ell}_{A_p,1/\ell}} \leq C |\Phi|_k$.
\end{proof}	
The STFT of an ultradistribution $f \in \sss^{\prime \beurou}_{\dagger}(\reals^{d})$ with respect to a window function $\psi \in \sss^{(M_{p})}_{(A_{p})}(\reals^{d})$ is defined as
	\[ V_{\psi} f(x, \xi) := \ev{f}{\overline{M_{\xi} T_{x} \psi}} = e^{-2\pi i \xi \cdot x} (f * \check{\overline{\psi}})(x) , \qquad (x, \xi) \in \reals^{2d} . \]
Clearly, $V_{\psi} f$ is a smooth function on $\reals^{2d}$. We define the adjoint STFT of $F \in \sss^{\prime}_{(A_{p})}(\reals^{d}_{x}) \widehat{\otimes} \sss^{\prime}_{(M_{p})}(\reals^{d}_{\xi})$ as
	\[ \ev{V_{\psi}^{*} F}{\varphi} := \ev{F}{\overline{V_{\psi} \overline\varphi}} , \qquad \varphi \in \sss^{(M_{p})}_{(A_{p})}(\reals^{d}) . \]
Notice that $V_{\psi}^{*} F \in \sss^{\prime (M_{p})}_{(A_{p})}(\reals^{d})$ by Proposition \ref{p:contSTFT}. In view of Proposition \ref{p:contSTFT}, similar arguments as in \cite[Section 3]{K-P-S-V2016} yield the following result.

	\begin{proposition}
	Let $\psi \in \sss^{(M_{p})}_{(A_{p})}(\reals^{d})$. Assume that both weight sequences satisfy $(M.1)$ and $(M.2)'$. The following mappings are continuous,
			\[ V_{\psi} : \sss^{\prime (M_{p})}_{(A_{p})}(\reals^{d}) \rightarrow \sss^{\prime}_{(A_{p})}(\reals^{d}_{x}) \widehat{\otimes} \sss^{\prime}_{(M_{p})}(\reals^{d}_{\xi}) \]
		and
			\[ V_{\psi}^{*} : \sss^{\prime}_{(A_{p})}(\reals^{d}_{x}) \widehat{\otimes} \sss^{\prime}_{(M_{p})}(\reals^{d}_{\xi}) \rightarrow \sss^{\prime (M_{p})}_{(A_{p})}(\reals^{d}). \]
		Moreover, if $\psi \neq 0$ and $\gamma \in \sss^{(M_{p})}_{(A_{p})}(\reals^{d})$ is a synthesis window for $\psi$, then the reconstruction formula
			\begin{equation}	
			\label{eq:reconstructSTFT-1}
		 \frac{1}{(\gamma, \psi)_{L^{2}}} V_{\gamma}^{*} \circ V_{\psi} = \id_{\sss^{\prime (M_{p})}_{(A_{p})}(\reals^{d}) } 
		 	\end{equation}
		is valid and the desingularization formula
			\begin{equation}
				\label{eq:desing} 
				\ev{f}{\varphi} = \frac{1}{(\gamma, \psi)_{L^{2}}} \int \int_{\reals^{2d}} V_{\psi} f(x, \xi) V_{\overline{\gamma}} \varphi(x, - \xi) dx d\xi 
			\end{equation}
		holds for all $f \in  \sss^{\prime (M_{p})}_{(A_{p})}(\reals^{d})$ and $\varphi \in  \sss^{(M_{p})}_{(A_{p})}(\reals^{d})$. 
	\end{proposition}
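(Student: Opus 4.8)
The plan is to deduce all four assertions from the continuity statements of Proposition~\ref{p:contSTFT} by transposition, using the nuclearity of the test spaces and the identification $(\sss_{(A_p)}(\reals^d_x) \widehat{\otimes} \sss_{(M_p)}(\reals^d_\xi))' \cong \sss'_{(A_p)}(\reals^d_x) \widehat{\otimes} \sss'_{(M_p)}(\reals^d_\xi)$ recorded above. The guiding principle is that the distributional STFT $V_\psi$ and its adjoint $V_\psi^*$ are, up to complex conjugations, the transposes of $V_\psi^*$ and $V_\psi$ acting on the test function spaces, so that continuity becomes automatic once the latter is known.

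For the continuity, I would first record the \emph{interchange identity}: for $f \in \sss'^{(M_p)}_{(A_p)}(\reals^d)$ and $F \in \sss_{(A_p)}(\reals^d_x) \widehat{\otimes} \sss_{(M_p)}(\reals^d_\xi)$,
\[
\ev{V_\psi f}{F} = \int\int_{\reals^{2d}} V_\psi f(x,\xi)\, F(x,\xi)\, dx\, d\xi = \ev{f}{\overline{V_\psi^* \overline{F}}},
\]
obtained from $V_\psi f(x,\xi) = \ev{f}{\overline{M_\xi T_x \psi}}$ by moving the weak integral $\int\int F(x,\xi) \overline{M_\xi T_x \psi}\, dx\, d\xi = \overline{V_\psi^* \overline{F}} \in \sss^{(M_p)}_{(A_p)}(\reals^d)$ inside the pairing. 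Since $F \mapsto \overline{V_\psi^* \overline{F}}$ is linear (the two conjugations cancel) and continuous by Proposition~\ref{p:contSTFT}, its transpose is continuous, and the identity shows that this transpose is exactly $V_\psi$; hence $V_\psi$ is continuous into $\sss'_{(A_p)}(\reals^d_x) \widehat{\otimes} \sss'_{(M_p)}(\reals^d_\xi)$. Dually, the defining relation $\ev{V_\psi^* F}{\varphi} = \ev{F}{\overline{V_\psi \overline{\varphi}}}$ exhibits $V_\psi^*$ as the transpose of the continuous map $\varphi \mapsto \overline{V_\psi \overline{\varphi}}$ on $\sss^{(M_p)}_{(A_p)}(\reals^d)$, so $V_\psi^*$ is continuous as well.

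For the reconstruction formula \eqref{eq:reconstructSTFT-1}, I would note that the $L^2$-STFT and its adjoint restrict on $\sss^{(M_p)}_{(A_p)}(\reals^d) \hookrightarrow L^2(\reals^d)$ to the maps of Proposition~\ref{p:contSTFT}, so \eqref{eq:reconstructSTFT} (with the roles of $\psi$ and $\gamma$ exchanged) gives $V_\psi^* \circ V_\gamma = (\psi,\gamma)_{L^2}\, \id$ on $\sss^{(M_p)}_{(A_p)}(\reals^d)$. Conjugating, the map $\varphi \mapsto \overline{V_\psi^* (V_\gamma \overline{\varphi})}$ equals $(\gamma,\psi)_{L^2}\, \id$ there; by the two transpose descriptions of the previous paragraph this is exactly the composite whose transpose is $V_\gamma^* \circ V_\psi$, so transposing the identity yields $V_\gamma^* \circ V_\psi = (\gamma,\psi)_{L^2}\, \id$ on $\sss'^{(M_p)}_{(A_p)}(\reals^d)$, which is \eqref{eq:reconstructSTFT-1}.

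Finally, \eqref{eq:desing} follows by pairing \eqref{eq:reconstructSTFT-1} with $\varphi \in \sss^{(M_p)}_{(A_p)}(\reals^d)$: unwinding $\ev{V_\gamma^* V_\psi f}{\varphi} = \ev{V_\psi f}{\overline{V_\gamma \overline{\varphi}}}$ through the definition of $V_\gamma^*$ and using the elementary identity $\overline{V_\gamma \overline{\varphi}(x,\xi)} = V_{\overline{\gamma}} \varphi(x,-\xi)$ rewrites the pairing as the stated double integral. The step I expect to demand the most care is the justification of the interchange identity in the second paragraph, namely that $\ev{f}{\cdot}$ commutes with the weak integral defining $\overline{V_\psi^* \overline{F}}$ and that this integral converges in $\sss^{(M_p)}_{(A_p)}(\reals^d)$; this is precisely where the nuclearity of the spaces and the quantitative estimates of Proposition~\ref{p:contSTFT} enter, exactly as in \cite[Section~3]{K-P-S-V2016}. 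Everything else is a formal transposition or a routine change of variables.
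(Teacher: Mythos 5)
Your proposal is correct and follows essentially the same route as the paper, which proves this proposition by simply invoking Proposition \ref{p:contSTFT} together with ``similar arguments as in \cite[Section 3]{K-P-S-V2016}'' --- those arguments being precisely the transposition/duality scheme you spell out. The interchange identity, the identification of $V_{\psi}$ and $V_{\psi}^{*}$ on ultradistributions as (conjugated) transposes of the test-function maps, and the derivation of \eqref{eq:reconstructSTFT-1} and \eqref{eq:desing} by transposing the $L^{2}$ reconstruction formula all check out.
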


\section{The spaces $\mathcal{D}_{L^{1}_{\omega}}^{\beurou}$ and $\dot{\mathcal{B}}_{\omega}^{\prime \beurou}$}\label{sect-struct}

 A measurable function $\omega: \reals^{d} \rightarrow (0, \infty)$ is called a \emph{weight function} if $\omega$ and $\omega^{-1}$ are locally bounded. Given a weight sequence $A_{p}$,  a weight function $\omega$ is said to be \textit{$(A_{p})$-admissible} (\textit{$\{A_{p}\}$-admissible}) if
	\[ \exists \lambda > 0 ~ (\forall \lambda > 0) ~ \exists C > 0 ~  \forall x, t \in \reals^{d} ~ : ~ \omega(x + t) \leq C \omega(x) e^{A(\lambda t)} . \]
Next, we introduce various function and ultradistribution spaces associated to a weight function $\omega$ (cf.\ \cite{D-P-V2015}). We define $L^{1}_{\omega}(\reals^d)$ as the Banach space consisting of all measurable functions $\varphi$ on $\reals^d$ such that
$$
 \norm{\varphi}_{L^{1}_{\omega}} := \int_{\reals^{d}} |\varphi(x)| \omega(x) dx < \infty.
 $$
Its dual is given by the space $L^{\infty}_{\omega}(\reals^d)$ of all those measurable function $\varphi$ on $\reals^d$ such that 
$$
 \norm{\varphi}_{L^{\infty}_{\omega}} := \operatorname*{ess \: sup}_{x \in \reals^d} \frac{|\varphi(x)|}{ \omega(x)} < \infty.
 $$
We write $\mathcal{D}_{L^{1}_{\omega}}(\reals^d)$  for the space consisting of all $\varphi \in C^\infty(\reals^d)$ such that $\varphi^{(\alpha)} \in L^1_\omega(\reals^d)$ for all $\alpha \in \naturals^d$. Let $M_p$ be a weight sequence.  For  $\ell > 0$ we denote by $\mathcal{D}^{M_{p}, \ell}_{L^{1}_{\omega}}$ the Banach space consisting of all $\varphi \in \mathcal{D}_{L^{1}_{\omega}}(\reals^d)$ such that
	\[ \norm{\varphi}_{\mathcal{D}^{M_{p}, \ell}_{L^{1}_{\omega}}} := \sup_{\alpha \in \naturals^{d}} \frac{\norm{\varphi^{(\alpha)}}_{L^{1}_{\omega}}}{\ell^{|\alpha|} M_{\alpha}} < \infty . \]
We define
	\[ \mathcal{D}^{(M_{p})}_{L^{1}_{\omega}} := \varprojlim_{\ell \rightarrow 0^{+}} \mathcal{D}^{M_{p}, \ell}_{L^{1}_{\omega}} , \qquad \mathcal{D}^{\{M_{p}\}}_{L^{1}_{\omega}} := \varinjlim_{\ell \rightarrow \infty} \mathcal{D}^{M_{p}, \ell}_{L^{1}_{\omega}} . \]
If $M_p$ satisfies $(M.2)'$, a standard argument shows that $\mathcal{D}^{\beurou}_{L^{1}_{\omega}} \subset \mathcal{E}^\ast(\reals^d)$ with continuous inclusion. We introduce the following set of assumptions on a weight sequence $M_p$ and a weight function $\omega$.
\begin{assumption} \label{assumption}
	\textit{There exists a weight sequence $A_p$ satisfying $(M.1)$ and $(M.2)'$ such that  $\omega$ is $(A_{p})$-admissible ($\{A_{p}\}$-admissible) and  $\sss^{(M_{p})}_{(A_{p})}(\reals^{d})$ is non-trivial.}
\end{assumption}

\begin{remark}\label{remark-1}  A sufficient condition for the non-triviality of $\mathcal{S}^{(M_p)}_{(A_p)}(\reals^d)$ is $p!^\sigma \subset M_p$ and $p!^\tau \subset A_p$ for some $\sigma, \tau > 0$ with $\sigma + \tau > 1$ \cite[p.\ 235]{GelfandShilov}. Other non-triviality conditions can be found in \cite{D-V-Nontrivialanalyfunc}. If a weight function $\omega$ is $(p!)$-admissible, then Assumption \ref{assumption} is fulfilled for $M_p$ and $\omega$, whenever $M_p$ is a weight sequence that satisfies $(M.1)$ and $(\log p)^{p} \prec M_p$, as follows from \cite[Proposition 2.7 and Theorem 5.9]{D-V-Nontrivialanalyfunc}. We point out that \cite[Remark 5.3]{D-V-ConvSTFT} $\omega$ is $(p!)$-admissible if and only if $L^1_\omega$ is translation-invariant if and only if
\begin{equation}\label{TIWFeq}
\operatorname*{ess \: sup}_{x \in \reals^d} \frac{\omega(\:\cdot\: + x)}{ \omega(x)} \in L^\infty_{\operatorname{loc}}.
\end{equation}
\end{remark}

In the rest of this section, we fix a weight sequence $M_p$ satisfying $(M.1)$ and $(M.2)'$, and a weight function $\omega$ such that Assumption \ref{assumption} holds. 

\begin{lemma}
	\label{l:topolDL1}
	$\mathcal{D}^{(M_{p})}_{L^{1}_{\omega}}$ is a quasinormable and thus distinguished Fr\'{e}chet space, and $\mathcal{D}^{\{M_{p}\}}_{L^{1}_{\omega}}$ is a complete and thus regular (LB)-space.
\end{lemma}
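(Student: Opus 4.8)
The plan is to route everything through the short-time Fourier transform (STFT), which converts the \emph{smoothness} of an element (encoded in the derivative-bounds defining $\mathcal{D}^{\beurou}_{L^{1}_{\omega}}$) into genuine \emph{decay in the frequency variable} $\xi$; this is decisive because it replaces cut-off/partition arguments, which are unavailable here since $(M.3)'$ is not assumed and $\mathcal{D}^{\ast}$ may be trivial. First I record the coarse structure: $\mathcal{D}^{(M_{p})}_{L^{1}_{\omega}}=\varprojlim_{n}\mathcal{D}^{M_{p},1/n}_{L^{1}_{\omega}}$ is a reduced countable projective limit of Banach spaces, hence Fréchet, while $\mathcal{D}^{\{M_{p}\}}_{L^{1}_{\omega}}=\varinjlim_{n}\mathcal{D}^{M_{p},n}_{L^{1}_{\omega}}$ is an (LB)-space. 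Since every quasinormable space is distinguished (Grothendieck) and every complete (LB)-space is regular, it suffices to prove that the Beurling space is \emph{quasinormable} and that the Roumieu space is \emph{complete}.

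The core step is to realize $\mathcal{D}^{\beurou}_{L^{1}_{\omega}}$ as a \emph{complemented} subspace of a weighted mixed-norm space. Fix a nonzero window $\psi\in\mathcal{S}^{(M_{p})}_{(A_{p})}(\reals^{d})$ (which exists by Assumption \ref{assumption}, using only $(M.1)$ and $(M.2)'$) together with a synthesis window $\gamma$. Establishing for $\mathcal{D}^{\beurou}_{L^{1}_{\omega}}$ the analogue of Proposition \ref{p:contSTFT}, the analysis map $V_{\psi}:\mathcal{D}^{\beurou}_{L^{1}_{\omega}}\to\mathcal{Y}^{\beurou}$ and synthesis map $V_{\gamma}^{*}:\mathcal{Y}^{\beurou}\to\mathcal{D}^{\beurou}_{L^{1}_{\omega}}$ are continuous, where $\mathcal{Y}^{\beurou}$ is the corresponding limit of the Banach spaces of measurable $\Phi(x,\xi)$ on $\reals^{2d}$ with $\norm{\Phi}_{\lambda}:=\sup_{\xi}e^{M(\xi/\lambda)}\int_{\reals^{d}}|\Phi(x,\xi)|\,\omega(x)\,dx<\infty$ (weighted $L^{1}_{\omega}$-integrability in $x$, ultradecay in $\xi$). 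The reconstruction formula \eqref{eq:reconstructSTFT-1} then shows that $P:=(\gamma,\psi)_{L^{2}}^{-1}V_{\psi}\circ V_{\gamma}^{*}$ is a continuous projection of $\mathcal{Y}^{\beurou}$ onto $V_{\psi}(\mathcal{D}^{\beurou}_{L^{1}_{\omega}})\cong\mathcal{D}^{\beurou}_{L^{1}_{\omega}}$. As a complemented subspace is simultaneously a quotient and a closed subspace of the ambient space, it inherits quasinormability (stable under quotients) and completeness (stable under closed subspaces); the problem thus reduces to the two facts for $\mathcal{Y}^{\beurou}$.

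For the Beurling target $\mathcal{Y}^{(M_{p})}$ quasinormability follows from a head/tail split in $\xi$ alone, the point being that the $x$-direction is a fixed Banach factor that never has to be subdivided. Given $\lambda$, choose $\mu<\lambda$; for $\Phi$ with $\norm{\Phi}_{\mu}\le1$ write $\Phi=\Phi\mathbf{1}_{|\xi|\le R}+\Phi\mathbf{1}_{|\xi|>R}$. On the tail, $e^{M(\xi/\lambda)}\norm{\Phi(\cdot,\xi)}_{L^{1}_{\omega}}\le e^{M(\xi/\lambda)-M(\xi/\mu)}$, and by the $(M.2)'$-estimate \eqref{eq:M2'} (applied with $k=\lambda/\mu>1$) this tends to $0$ as $|\xi|\to\infty$, so the tail has $\norm{\cdot}_{\lambda}\le\varepsilon$ for $R$ large; the head lies in the set $\{\Phi\mathbf{1}_{|\xi|\le R}:\norm{\Phi}_{\mu}\le1\}$, which is bounded in every $\norm{\cdot}_{\nu}$ (bound $e^{M(R/\nu)}$) and hence bounded in $\mathcal{Y}^{(M_{p})}$. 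This verifies quasinormability.

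The step I expect to be the genuine obstacle is \emph{completeness} of the Roumieu target $\mathcal{Y}^{\{M_{p}\}}$. Here the linking maps are not compact — the $x$-variable carries the infinite-dimensional factor $L^{1}_{\omega}$ and provides no compactness — so $\mathcal{Y}^{\{M_{p}\}}$ is not a (DFS)-space and Montel-type arguments fail. Instead I would show that the inductive limit is boundedly retractive by verifying Retakh's condition (M): the same frequency gain from \eqref{eq:M2'} shows that on the unit ball of each step $\mathcal{Y}^{\ell_{n}}$ the topologies of all finer steps $\mathcal{Y}^{\ell_{m}}$ coincide (a tail estimate as above controls $|\xi|>R$, while on $|\xi|\le R$ one compares $\norm{\cdot}_{m}$ and $\norm{\cdot}_{k}$ up to the finite factor $e^{M(R/\ell_{m})}$). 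Bounded retractivity yields that $\mathcal{Y}^{\{M_{p}\}}$ is complete and regular, and completeness transfers to $\mathcal{D}^{\{M_{p}\}}_{L^{1}_{\omega}}$ through the projection $P$; regularity is then automatic since a complete (LB)-space is regular.
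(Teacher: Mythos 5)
Your proposal is correct, but it proves the lemma by a genuinely different route than the paper does. The paper's own proof of quasinormability in the Beurling case is a direct, physical-space argument: it verifies the Meise--Vogt criterion by regularizing, i.e.\ it takes $\chi \in \sss^{(M_{p})}_{(A_{p})}(\reals^{d})$ with $\int \chi = 1$, shows that the ``heads'' $\varphi \ast \chi_{\varepsilon}$ form a bounded set in every norm $\| \cdot \|_{\mathcal{D}^{M_{p}, r}_{L^{1}_{\omega}}}$, and controls the ``tail'' $\varphi - \varphi \ast \chi_{\varepsilon}$ by the mean value theorem; the Roumieu completeness is then simply delegated to \cite[Proposition 5.1]{dimovski2016}. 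You instead exhibit $\mathcal{D}^{\beurou}_{L^{1}_{\omega}}$ as a complemented subspace of a weighted mixed-norm space via $V_{\psi}$, $V_{\gamma}^{*}$ and the reconstruction formula, and transfer quasinormability (stable under quotients) and completeness (stable under closed subspaces) from the ambient space, where head/tail splits in the frequency variable together with \eqref{eq:M2'} do all the work. This is exactly the device the paper itself deploys later for $\dot{\mathcal{B}}^{\prime \beurou}_{\omega}$ in Corollary \ref{lct}, so your argument is very much in the spirit of the article; it has the advantage of treating both halves of the lemma uniformly, of replacing the external reference for Roumieu completeness by a transparent condition-(M)/weighted-inductive-limit argument (essentially \cite[Theorem 2.6]{B-M-S}), and of yielding bounded retractivity of $\mathcal{D}^{\{M_{p}\}}_{L^{1}_{\omega}}$ as a by-product. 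The price is that you must front-load the mapping estimates of Lemmas \ref{l:Dl1STFTgrowth} and \ref{l:Dl1STFTConj}; this is legitimate since those are direct computations independent of the present lemma, but you should take care to use only the stepwise continuity statements and not the ``uniformly over bounded sets'' clause of Proposition \ref{p:Dl1STFTchar}, whose Roumieu case already invokes the regularity you are trying to establish. The paper's mollification proof, by contrast, is shorter and needs no STFT machinery at this stage of the exposition.
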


\begin{proof}
 To verify that  $\mathcal{D}^{(M_{p})}_{L^{1}_{\omega}}$ is quasinormable, it suffices to show that \cite[Lemma 26.14]{M-V-FuncAnal} 
		\begin{gather*} 
			\forall \ell > 0 ~ \forall r > 0 ~ \forall \varepsilon \in (0,1] ~\exists R > 0 ~ \forall \varphi \in \mathcal{D}^{(M_{p})}_{L^{1}_{\omega}} \text{ with } \| \varphi \|_{\mathcal{D}^{M_{p}, \ell / H}_{L^{1}_{\omega}}} \leq 1 \\ \nonumber
			 \exists \psi \in \mathcal{D}^{(M_{p})}_{L^{1}_{\omega}} \text{ with } \| \psi \|_{\mathcal{D}^{M_{p}, r}_{L^{1}_{\omega}}} \leq R \text{ such that } \| \varphi - \psi \|_{\mathcal{D}^{M_{p}, \ell}_{L^{1}_{\omega}}} \leq \varepsilon.
		\end{gather*}
Let $\ell > 0$ and $\varphi \in \mathcal{D}^{(M_{p})}_{L^{1}_{\omega}} \text{ with } \| \varphi \|_{\mathcal{D}^{M_{p}, \ell / H}_{L^{1}_{\omega}}} \leq 1$ be arbitrary. Choose $\chi \in \mathcal{S}^{(M_p)}_{(A_p)}(\reals^d)$ with $\int_{\reals^d} \chi(x) dx =1$ and  put $\chi_{\varepsilon} =  \varepsilon^{-d} \chi(\cdot / \varepsilon)$ for $\varepsilon \in (0,1]$. For any $r > 0$ we have that
\begin{align*} 
\| \varphi * \chi_{\varepsilon} \|_{\mathcal{D}^{M_{p}, r}_{L^{1}_{\omega}}} &= \sup_{\alpha \in \naturals^{d}} \frac{1}{r^{|\alpha|} M_{\alpha}} \int_{\reals^{d}} |\varphi * \chi_{\varepsilon}^{(\alpha)} (x)| \omega(x) dx \\
&\leq C \| \varphi \|_{L^{1}_{\omega}} \sup_{\alpha \in \naturals^{d}} \frac{1}{(\varepsilon r)^{|\alpha|} M_{\alpha}} \int_{ \reals^d} |\chi^{(\alpha)}(x)| e^{A(\lambda \varepsilon x)} dx \\  
&\leq C \|\chi\|_{\mathcal{S}^{M_p,\varepsilon r}_{A_p,\lambda H^{d+1}}}\int_{\reals^d} e^{A(\lambda x)-A( \lambda H^{d+1}x)} dx
.
\end{align*}
 On the other hand, applying the mean-value theorem, we obtain that
 		\begin{align*}
		&\| \varphi- \varphi * \chi_{\varepsilon} \|_{\mathcal{D}^{M_{p}, l}_{L^{1}_{\omega}}}\\
			&=  \sup_{\alpha \in \naturals^{d}} \frac{1}{\ell^{|\alpha|} M_{\alpha}} \| \varphi^{(\alpha)} - \varphi^{(\alpha)} * \chi_{\varepsilon} \|_{L^{1}_{\omega}} \\
			& \leq \varepsilon \sup_{\alpha \in \naturals^d}\frac{1}{\ell^{|\alpha|} M_{\alpha}}  \int_{\reals^{d}} \omega(x) \left( \int_{\reals^{d}} |\chi(t)| |t| \sum_{j = 1}^{d} \int_{0}^{1} | \varphi^{(\alpha + e_{j})}(x - \gamma \varepsilon t)| d\gamma dt \right) dx \\
			& \leq \left(  C C_{0} \frac{\ell}{H} d \int_{ \reals^d} \chi(t)|t| e^{A(\lambda t)} dt \right) \varepsilon ,
		\end{align*}
	from which the result easily follows. The completeness of $\mathcal{D}^{\{M_{p}\}}_{L^{1}_{\omega}}$ can be shown in a similar way as in \cite[Proposition 5.1]{dimovski2016}.
\end{proof}

We will need the ensuing basic density property.

	\begin{proposition}\label{proposition density}
		We have the following dense continuous inclusions,
		$$
		\sss^{\beurou}_{\dagger}(\reals^{d}) \hookrightarrow \mathcal{D}^{\beurou}_{L^{1}_{\omega}} \hookrightarrow \sss^{\prime \beurou}_{\dagger}(\reals^{d}).
		$$
	\end{proposition}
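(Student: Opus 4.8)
The plan is to establish each of the two arrows separately, proving continuity and density in turn.

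\emph{The first inclusion $\sss^{\beurou}_{\dagger}(\reals^{d}) \hookrightarrow \mathcal{D}^{\beurou}_{L^{1}_{\omega}}$.} For continuity I would estimate, for $\varphi \in \sss^{M_{p}, \ell}_{A_{p}, q}$ and any $\alpha$, using the pointwise bound built into the definition of $\sss^{M_{p}, \ell}_{A_{p}, q}$,
\[
\norm{\varphi^{(\alpha)}}_{L^{1}_{\omega}} \leq \norm{\varphi}_{\sss^{M_{p}, \ell}_{A_{p}, q}} \, \ell^{|\alpha|} M_{\alpha} \int_{\reals^{d}} e^{-A(qx)} \omega(x) \, dx .
\]
Since $\omega$ is $(A_{p})$-admissible ($\{A_{p}\}$-admissible), evaluating admissibility at $x = 0$ gives $\omega(x) \leq C e^{A(\lambda x)}$, so the integral is dominated by $\int_{\reals^{d}} e^{A(\lambda x) - A(qx)}\,dx$, which converges once $q/\lambda$ is large by \eqref{eq:M2'}. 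Tracking the indices ($q = 1/\ell$ in both limits): in the Beurling case one picks $\ell$ small for each target seminorm so that $q$ is large, while in the Roumieu case admissibility for all $\lambda$ lets one take $\lambda$ small for each fixed source step. This yields the continuous inclusion of the Banach building blocks, hence of the full Fr\'echet and (LB) spaces.

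For density I would truncate. Fix $\chi \in \sss^{(M_{p})}_{(A_{p})}(\reals^{d})$ (nonzero by Assumption \ref{assumption}, and after a translation and scaling normalized so that $\chi(0) = 1$) and set $\psi_{n} := \varphi \cdot \chi(\,\cdot\,/n)$ for $\varphi \in \mathcal{D}^{\beurou}_{L^{1}_{\omega}}$. The crucial preliminary step, which I expect to be the main obstacle, is a \emph{global} pointwise bound $|\varphi^{(\beta)}(x)| \leq C \tilde{\ell}^{|\beta|} M_{\beta} / \omega(x)$: this follows by writing $\varphi^{(\beta)}(x)$ via the fundamental theorem of calculus as an average over the cube $x + [0,1]^{d}$ of $\varphi^{(\beta)}$ and $\nabla \varphi^{(\beta)}$, bounding $\inf_{x + [0,1]^{d}} \omega \geq c\, \omega(x)$ by admissibility, and absorbing the extra derivative with $(M.2)'$. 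Together with $1/\omega(x) \leq C e^{A(\lambda x)}$ and the rapid decay of $\chi$ (which, $\chi$ being of Beurling type, holds at every rate), a Leibniz expansion then shows $\psi_{n} \in \sss^{\beurou}_{\dagger}(\reals^{d})$ for each fixed $n$. It remains to prove $\psi_{n} \to \varphi$ in $\mathcal{D}^{\beurou}_{L^{1}_{\omega}}$: the term $\varphi^{(\alpha)}(1 - \chi(\,\cdot\,/n))$ is controlled using $|1 - \chi(x/n)| \leq C \min(1, |x|/n)$ and a split of the integral over $\{|x| \leq \sqrt{n}\}$ and its complement, while the cross terms carry factors $n^{-|\beta|}$ with $|\beta| \geq 1$. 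In both the Beurling and the regular (LB) Roumieu setting (Lemma \ref{l:topolDL1}), uniformity in $\alpha$ comes from the fact that $\varphi$ lies in a Banach step $\mathcal{D}^{M_{p}, \ell'}_{L^{1}_{\omega}}$ with $\ell' < \ell$, so that the decisive factor $(\ell'/\ell)^{|\alpha|}$ tends to $0$; the finitely many remaining $\alpha$ are handled by dominated convergence.

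\emph{The second inclusion $\mathcal{D}^{\beurou}_{L^{1}_{\omega}} \hookrightarrow \sss^{\prime \beurou}_{\dagger}(\reals^{d})$.} Here I would realize $\varphi$ as the functional $\phi \mapsto \int_{\reals^{d}} \varphi \phi$. Continuity is immediate from $|\ev{\varphi}{\phi}| \leq \norm{\varphi}_{L^{1}_{\omega}} \norm{\phi}_{L^{\infty}_{\omega}}$ together with the continuous inclusion $\sss^{\beurou}_{\dagger}(\reals^{d}) \hookrightarrow L^{\infty}_{\omega}$ (via $\omega(x) \geq c\, e^{-A(\lambda x)}$, again from admissibility), which moreover shows that bounded subsets of $\sss^{\beurou}_{\dagger}(\reals^{d})$ are $L^{\infty}_{\omega}$-bounded and hence gives continuity into the strong dual; injectivity holds since $\sss^{\beurou}_{\dagger}(\reals^{d})$ is total. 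For density I would invoke that $\sss^{\beurou}_{\dagger}(\reals^{d})$ is dense in its own strong dual $\sss^{\prime \beurou}_{\dagger}(\reals^{d})$ — a standard consequence of reflexivity of these nuclear spaces and nondegeneracy of the $L^{2}$-pairing — and combine it with the density from the first part: since $\sss^{\beurou}_{\dagger}(\reals^{d}) \subseteq \mathcal{D}^{\beurou}_{L^{1}_{\omega}} \subseteq \sss^{\prime \beurou}_{\dagger}(\reals^{d})$ continuously, the closure of $\mathcal{D}^{\beurou}_{L^{1}_{\omega}}$ in $\sss^{\prime \beurou}_{\dagger}(\reals^{d})$ contains the closure of $\sss^{\beurou}_{\dagger}(\reals^{d})$, namely all of $\sss^{\prime \beurou}_{\dagger}(\reals^{d})$.
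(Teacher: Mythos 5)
Your argument is correct in outline but takes a genuinely different route from the paper at the one point where real work is needed, namely the density of $\sss^{\beurou}_{\dagger}(\reals^{d})$ in $\mathcal{D}^{\beurou}_{L^{1}_{\omega}}$. You approximate by pure multiplicative truncation $\varphi\,\chi(\cdot/n)$ with $\chi\in\sss^{(M_{p})}_{(A_{p})}(\reals^{d})$, which forces you to first prove the global pointwise bounds $|\varphi^{(\beta)}(x)|\leq C\tilde\ell^{\,|\beta|}M_{\beta}/\omega(x)$ (a Sobolev-type embedding of $\mathcal{D}^{\beurou}_{L^{1}_{\omega}}$ into a weighted $L^{\infty}$ class) so that the Leibniz expansion of $(\varphi\chi(\cdot/n))^{(\alpha)}$ can be controlled; you then also need the superadditivity $A(s)+A(t)\leq A(s+t)$, which follows from $(M.1)$, to see that $e^{A(\lambda x)}|\chi^{(\gamma)}(x/n)|\lesssim e^{-A(qx)}$ for suitable $q$. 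The paper instead sets $\varphi_{n,j}=\chi_{n}\ast(\psi_{j}\varphi)$ with $\chi_{n}$ a Gelfand--Shilov mollifier and $\psi_{j}$ an ordinary compactly supported cutoff: since all derivatives fall on $\chi_{n}$, both membership in $\sss^{(M_{p})}_{(A_{p})}(\reals^{d})$ and the error estimates use only the $L^{1}_{\omega}$-norms of $\varphi$ and its derivatives (plus the mean-value-theorem estimate already established in the proof of Lemma \ref{l:topolDL1}), so no pointwise information about $\varphi$ is required. Your route buys a reusable pointwise estimate; the paper's is shorter and avoids the Sobolev lemma altogether. The remaining parts of your proposal (continuity of both inclusions via admissibility and \eqref{eq:M2'}, and deducing the second density from the first by sandwiching together with the density of $\sss^{\beurou}_{\dagger}(\reals^{d})$ in $\sss^{\prime\beurou}_{\dagger}(\reals^{d})$) coincide with what the paper does or treats as clear.

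One step must be repaired before your version is complete: in dimension $d\geq 2$ you cannot bound $\sup_{Q}|g|$ by the $L^{1}(Q)$-norms of $g$ and $\nabla g$ alone ($W^{1,1}$ does not embed into $L^{\infty}$), so the fundamental-theorem-of-calculus argument on the cube $x+[0,1]^{d}$ must use all mixed derivatives $\partial^{\gamma}\varphi^{(\beta)}$ with $\gamma\leq(1,\dots,1)$, i.e.\ $d$ extra derivatives rather than one. This is harmless --- iterating $(M.2)'$ $d$ times merely replaces $\tilde\ell$ by a multiple of $\tilde\ell H^{d}$ --- but as written that step fails for $d\geq 2$.
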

	
	\begin{proof} We adapt the idea from \cite[Proof of Proposition 5.2]{dimovski2016}.  It is clear that $\sss^{\beurou}_{\dagger}(\reals^{d}) \subset \mathcal{D}^{\beurou}_{L^{1}_{\omega}} \subset \sss^{\prime \beurou}_{\dagger}(\reals^{d})$ with continuous inclusions. Since $\sss^{\beurou}_{\dagger}(\reals^{d})$ is dense in $\sss^{\prime \beurou}_{\dagger}(\reals^{d})$, it suffices to show that $\sss^{(M_p)}_{(A_p)}(\reals^{d})$ is dense in  $\mathcal{D}^{\beurou}_{L^{1}_{\omega}}$. Choose $\chi \in \sss^{(M_{p})}_{(A_{p})}(\reals^{d})$ and $\psi\in\mathcal{D}(\reals^{d})$ such that $\int_{\reals^{d}} \chi(x)dx = 1$ and $\psi(0) = 1$. Next, set $\chi_{n} = n^{d} \chi(n \:\cdot\:)$ and $\psi_{n} = \psi(\:\cdot\: / n)$ for $n \geq 1$. Let $\ell > 0$ and $\varphi \in \mathcal{D}^{M_{p}, \ell}_{L^{1}_{\omega}}$ be arbitrary. We also fix an arbitrary $\varepsilon>0$. In view of the  inequality $e^{A(x+y)}\leq e^{A(2x)+A(2y)}$, it is clear that $\varphi_{n, j} = \chi_{n} * (\psi_{j} \varphi) \in \sss^{(M_{p})}_{(A_{p})}(\reals^{d})$. We shall show that there are $n,j\in\mathbb{N}$ such that $\norm{\varphi-\varphi_{n,j}}_{\mathcal{D}^{M_{p}, H\ell}_{L^{1}_{\omega}}}\leq  \varepsilon$, which will complete the proof. Obviously,
			\begin{equation}
				\label{eq:densityStriangleineq}
				\norm{\varphi - \varphi_{n,j}}_{\mathcal{D}^{M_{p}, H\ell}_{L^{1}_{\omega}}} \leq \norm{\varphi - \chi_{n} \ast \varphi}_{\mathcal{D}^{M_{p}, H\ell}_{L^{1}_{\omega}}} + \norm{\chi_{n}\ast (\varphi-\psi_{j}\varphi)}_{\mathcal{D}^{M_{p}, H\ell}_{L^{1}_{\omega}}} . 
			\end{equation}
The last part of the proof of Lemma \ref{l:topolDL1} already gives an estimate  for the first term in the right-hand side of \eqref{eq:densityStriangleineq},
\[
			\norm{\varphi - \chi_{n} \ast \varphi}_{\mathcal{D}^{M_{p}, H\ell}_{L^{1}_{\omega}}} 
				\leq n^{-1}CC_0 \ell d  \int_{\reals^{d}} |\chi(t)| |t| e^{A(\lambda t)} dt \| \varphi\|_{\mathcal{D}^{M_{p}, \ell}_{L^{1}_{\omega}}}   \leq \frac{ \varepsilon}{2},
	\]
for sufficiently large $n$.
For such a fixed $n$, we now proceed to estimate the second term in the right-hand side of \eqref{eq:densityStriangleineq}. We have that	
			\begin{align*}
				\norm{\chi_{n}\ast (\varphi-\psi_{j}\varphi)}_{\mathcal{D}^{M_{p}, H\ell}_{L^{1}_{\omega}}} & \leq C\norm{\varphi-\psi_j\varphi}_{L^{1}_{\omega}} \sup_{\alpha\in\mathbb{N}^{d}}\frac{n^{|\alpha|}}{(H\ell)^{|\alpha|}M_{\alpha}} \int_{\reals^{d}}|\chi^{(\alpha)}(x)| e^{A(\lambda x)}d x\\
				&\leq C\norm{\varphi-\psi_j\varphi}_{L^{1}_{\omega}} \norm{\chi}_{\mathcal{S}^{M_p, H\ell/n}_{A_p, \lambda H^{d+1}}}\int_{\reals^{d}}e^{A(\lambda x)-A(\lambda H^{d+1}x)}d x
				\leq \frac{\varepsilon}{2},
			\end{align*}
for large enough $j$.
	\end{proof}
The strong dual of $\mathcal{D}^{\beurou}_{L^{1}_{\omega}}$ is denoted by $\mathcal{B}^{\prime \beurou}_{\omega}$. By the previous proposition, we may view  $\mathcal{B}^{\prime \beurou}_{\omega}$ as a subspace of $\sss^{\prime \beurou}_{\dagger}(\reals^{d})$. We define $\dot{\mathcal{B}}^{\prime \beurou}_{\omega}$ as the closure in $\mathcal{B}^{\prime \beurou}_{\omega}$ of the space  of compactly supported continuous functions on $\reals^d$. Notice that $\dot{\mathcal{B}}^{\prime \beurou}_{\omega}$ coincides with the closure in $\mathcal{B}^{\prime \beurou}_{\omega}$ of $\sss^{(M_p)}_{(A_p)}(\reals^{d})$.

\subsection{Characterization via the STFT}

The goal of this subsection is to characterize $\mathcal{D}^{\beurou}_{L^{1}_{\omega}}$ and $\dot{\mathcal{B}}^{\prime \beurou}_{\omega}$ in terms of the STFT. We  first consider $\mathcal{D}^{\beurou}_{L^{1}_{\omega}}$.  
The following two lemmas are needed in our analysis.

	\begin{lemma}
		\label{l:Dl1STFTgrowth}
		Let $\psi \in \sss^{(M_{p})}_{(A_{p})}(\reals^{d})$. Then, for any $\ell > 0$, there is $C' > 0$ such that
			\[ \norm{V_{\psi} \varphi(\:\cdot\:, \xi)}_{L^{1}_{\omega}} \leq C' \norm{\varphi}_{\mathcal{D}^{M_{p}, \ell}_{L^{1}_{\omega}}} e^{-M(\pi \xi / (\ell \sqrt{d}))}, \qquad \xi \in \reals^d,  \]
			for all $\varphi \in \mathcal{D}^{M_{p}, \ell}_{L^{1}_{\omega}}$. 
	\end{lemma}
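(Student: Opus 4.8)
The plan is to extract the decay in $\xi$ from the oscillatory factor $e^{-2\pi i \xi\cdot t}$ in the STFT by integration by parts, while the $L^1_\omega$-norm in $x$ is controlled by transferring the weight from $x$ to the variable $t$ via the admissibility of $\omega$ and then exploiting the rapid decay of the window $\psi$. Concretely, fix $\xi$ and $\gamma \in \naturals^d$. Since $\partial^\gamma_t e^{-2\pi i \xi\cdot t} = (-2\pi i\xi)^\gamma e^{-2\pi i\xi\cdot t}$, integration by parts in $t$ (the boundary terms vanish because every derivative of $\psi$ decays faster than any exponential) together with the Leibniz rule gives
\[
(2\pi \xi)^\gamma V_{\psi}\varphi(x,\xi) = (-1)^{|\gamma|}\sum_{\delta\leq\gamma}\binom{\gamma}{\delta}\int_{\reals^d}\varphi^{(\delta)}(t)\,\overline{\psi^{(\gamma-\delta)}(t-x)}\,e^{-2\pi i\xi\cdot t}\,dt.
\]

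Next I would take the $L^1_\omega$-norm in $x$, apply Fubini, and in the inner integral substitute $s = x - t$. The admissibility of $\omega$ yields $\omega(t+s)\leq C\omega(t)e^{A(\lambda s)}$ for a fixed $\lambda>0$, so each summand factors as
\[
\int_{\reals^d}|\varphi^{(\delta)}(t)|\,\omega(t)\,dt\cdot\int_{\reals^d}|\psi^{(\gamma-\delta)}(-s)|\,e^{A(\lambda s)}\,ds = \norm{\varphi^{(\delta)}}_{L^1_\omega}\int_{\reals^d}|\psi^{(\gamma-\delta)}(-s)|e^{A(\lambda s)}ds.
\]
Because $\psi\in\sss^{(M_p)}_{(A_p)}(\reals^d)=\varprojlim_{h\to 0^+}\sss^{M_p,h}_{A_p,1/h}$, I may, for any chosen $h>0$, bound $|\psi^{(\gamma-\delta)}(-s)|\leq \norm{\psi}_{\sss^{M_p,h}_{A_p,1/h}}h^{|\gamma-\delta|}M_{\gamma-\delta}e^{-A(s/h)}$; the remaining integral $\int e^{A(\lambda s)-A(s/h)}ds$ converges provided $h$ is small enough, which follows from \eqref{eq:M2'} once $1/h$ exceeds $\lambda$ by a fixed margin depending only on $d$, $H$, and $C_0$. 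Call this threshold $h_0$.

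To finish, I would insert $\norm{\varphi^{(\delta)}}_{L^1_\omega}\leq\norm{\varphi}_{\mathcal{D}^{M_p,\ell}_{L^1_\omega}}\ell^{|\delta|}M_\delta$, use log-convexity $(M.1)$ in the form $M_\delta M_{\gamma-\delta}\leq M_0 M_\gamma$, and evaluate the binomial sum $\sum_{\delta\leq\gamma}\binom{\gamma}{\delta}\ell^{|\delta|}h^{|\gamma-\delta|}=(\ell+h)^{|\gamma|}$, arriving at
\[
(2\pi)^{|\gamma|}|\xi^\gamma|\,\norm{V_{\psi}\varphi(\:\cdot\:,\xi)}_{L^1_\omega}\leq C\,\norm{\varphi}_{\mathcal{D}^{M_p,\ell}_{L^1_\omega}}(\ell+h)^{|\gamma|}M_\gamma,
\]
valid for every $\gamma$ (note that $(2\pi\xi)^\gamma$ is constant for the norm in $x$). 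Choosing $\gamma = p\,e_{j_0}$ in the direction of the largest coordinate of $\xi$, so that $|\xi^\gamma| = |\xi_{j_0}|^p\geq(|\xi|/\sqrt d)^p$, dividing through, and taking the infimum over $p\in\naturals$ recovers $e^{-M(\cdot)}$ directly from the definition $e^{-M(t)}=\inf_p M_p/(M_0 t^p)$, giving the bound with exponent $-M(2\pi|\xi|/((\ell+h)\sqrt d))$. The last step is to select $h = \min(\ell,h_0)$: this simultaneously guarantees convergence of the $\psi$-integral and, since then $\ell+h\leq 2\ell$ and $M$ is nondecreasing, upgrades the exponent to $-M(\pi|\xi|/(\ell\sqrt d))$, as required.

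The routine parts are the differentiation and Fubini; the points requiring care — which I regard as the crux — are the balancing of the single parameter $h$ against the two competing demands (integrability of the window factor versus matching the prescribed exponent $\pi/(\ell\sqrt d)$), and the passage from the monomial $\xi^\gamma$ to the radial associated function $M(\:\cdot\:|\xi|)$ via the dominant-coordinate choice $\gamma = p\,e_{j_0}$, which is precisely what produces the factor $\sqrt d$ in the exponent.
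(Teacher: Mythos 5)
Your argument is correct and is essentially the paper's own proof: integration by parts to produce $\xi^{\gamma}V_{\psi}\varphi$, Leibniz plus Fubini with the $(A_p)$-admissibility of $\omega$ to shift the weight from $x$ to $t$, the Gelfand--Shilov seminorm of $\psi$ together with \eqref{eq:M2'} to control the window integral, $(M.1)$ and the binomial sum, and finally the dominant-coordinate choice $\gamma=p\,e_{j_0}$ to pass to $e^{-M(\pi\xi/(\ell\sqrt d))}$. The only (inessential) blemish is the claim that derivatives of $\psi$ decay ``faster than any exponential'' --- they decay like $e^{-A(q\cdot)}$, which need not beat every exponential; the integration by parts is instead justified by the integrability of $\varphi^{(\delta)}(t)\psi^{(\gamma-\delta)}(t-x)$, which your admissibility estimate already provides.
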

	
	\begin{proof}
		Let $\varphi \in \mathcal{D}^{M_{p}, \ell}_{L^{1}_{\omega}}$ be arbitrary. For any $\alpha \in \naturals^{d}$ we have that
			\begin{align*}
				&\int_{\reals^{d}} |\xi^{\alpha} V_{\psi} \varphi(x, \xi)| \omega(x) dx \\
				&\leq (2 \pi)^{-|\alpha|}\sum_{\beta \leq \alpha} {\alpha \choose \beta} \int_{\reals^{d}} \omega(x) \left( \int_{\reals^{d}} |\varphi^{(\beta)}(t)| |\psi^{(\alpha - \beta)}(x - t)| dt \right) dx \\
				&\leq C(2 \pi)^{-|\alpha|} \sum_{\beta \leq \alpha} {\alpha \choose \beta} \int_{\reals^{d}} |\varphi^{(\beta)}(t)| \omega(t) \left( \int_{\reals^{d}} |\psi^{(\alpha - \beta)}(x - t)| e^{A(\lambda(x - t))} dx \right) dt \\
				&\leq   C' \norm{\varphi}_{\mathcal{D}^{M_{p}, \ell}_{L^{1}_{\omega}}} (\ell / \pi)^{|\alpha|} M_{\alpha},
			\end{align*}
		whence
	\begin{align*}
	\int_{\reals^{d}} |V_{\psi} \varphi(x, \xi)| \omega(x) dx &\leq M_0C' \norm{\varphi}_{\mathcal{D}^{M_{p}, \ell}_{L^{1}_{\omega}}} \inf_{\alpha \in \naturals^{d}} \left(\frac{\ell \sqrt{d}}{\pi |\xi|}\right)^{|\alpha|} \frac{M_{\alpha}}{M_{0}} \\
	&= M_0C' \norm{\varphi}_{\mathcal{D}^{M_{p}, \ell}_{L^{1}_{\omega}}} e^{-M(\pi \xi / (\ell \sqrt{d}))} . 
	\end{align*}
	\end{proof}
	
	\begin{lemma}
		\label{l:Dl1STFTConj}
		Let $\psi \in \sss^{(M_{p})}_{(A_{p})}(\reals^{d})$. Suppose that $F$ is a measurable function on $\reals^{2d}$ such that
			\[ \sup_{\xi \in \reals^{d}} e^{M(4 \pi H^{d + 1} \xi / \ell)} \int_{\reals^{d}} |F(x, \xi)| \omega(x) dx < \infty . \]
		Then, the function
			\[ t \mapsto \int \int_{\reals^{2d}} F(x, \xi) M_{\xi} T_{x} \psi(t) dx d\xi \]
		belongs to $\mathcal{D}^{M_{p}, \ell}_{L^{1}_{\omega}}$.
	\end{lemma}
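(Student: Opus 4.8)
The plan is to show directly that $g(t) = \int\int_{\reals^{2d}} F(x,\xi) M_\xi T_x\psi(t)\, dx\, d\xi = \int\int_{\reals^{2d}} F(x,\xi) e^{2\pi i \xi\cdot t}\psi(t-x)\, dx\, d\xi$ lies in the Banach space $\mathcal{D}^{M_p,\ell}_{L^1_\omega}$, i.e.\ to produce a bound $\norm{g^{(\alpha)}}_{L^1_\omega}\le C\ell^{|\alpha|}M_\alpha$ uniform in $\alpha\in\naturals^d$, with $C$ controlled by the finite supremum $K$ appearing in the hypothesis. First I would check that $g$ is well defined and smooth with differentiation permitted under the integral sign: since $\psi\in\sss^{(M_p)}_{(A_p)}(\reals^d)$ decays faster than $e^{-A(qs)}$ for every $q$, the $(A_p)$-admissibility $\omega(t)\le C\omega(x)e^{A(\lambda(t-x))}$ lets one dominate $|\psi(t-x)|$ and its derivatives by $\omega(x)^{-1}$ times a factor bounded locally in $t$, while the hypothesis $\int_{\reals^d}|F(x,\xi)|\omega(x)\,dx\le Ke^{-M(a\xi)}$ with $a=4\pi H^{d+1}/\ell$ furnishes an integrable majorant in $(x,\xi)$ uniformly for $t$ in compacts, using that $e^{-M(a\xi)}$ decays faster than any power of $|\xi|$. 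The Leibniz rule then yields $g^{(\alpha)}(t)=\sum_{\beta\le\alpha}\binom{\alpha}{\beta}(2\pi i)^{|\beta|}\int\int_{\reals^{2d}}F(x,\xi)\,\xi^\beta e^{2\pi i\xi\cdot t}\psi^{(\alpha-\beta)}(t-x)\,dx\,d\xi$.

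The core estimate is that of $\norm{g^{(\alpha)}}_{L^1_\omega}$. After moving absolute values inside and integrating $|g^{(\alpha)}(t)|\omega(t)$ in $t$, I would apply $(A_p)$-admissibility to replace $\omega(t)$ by $C\omega(x)e^{A(\lambda(t-x))}$ and use Fubini to carry out the $t$-integral first. This factors each summand into a spatial integral $\int_{\reals^d}|\psi^{(\alpha-\beta)}(s)|e^{A(\lambda s)}\,ds$ times a frequency integral $\int\int_{\reals^{2d}}|F(x,\xi)||\xi|^{|\beta|}\omega(x)\,dx\,d\xi$. For the first, $\psi\in\sss^{(M_p)}_{(A_p)}(\reals^d)$ gives $|\psi^{(\gamma)}(s)|\le \norm{\psi}_{\sss^{M_p,h}_{A_p,1/h}}h^{|\gamma|}M_\gamma e^{-A(s/h)}$, so the integral is bounded by $C_1 h^{|\alpha-\beta|}M_{\alpha-\beta}$ once $h$ is small enough that $\int_{\reals^d}e^{A(\lambda s)-A(s/h)}\,ds<\infty$; this convergence is a routine consequence of \eqref{eq:M2'} applied to $A$, valid for $1/h>\lambda H^d$. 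For the second, the hypothesis bounds the inner $x$-integral by $Ke^{-M(a\xi)}$, and the elementary inequality $|\xi|^{|\beta|}\le (M_\beta/M_0)(K_0/a)^{|\beta|}e^{M(a\xi/K_0)}$ (the defining supremum for $M$ with exponent $|\beta|$) reduces $\int_{\reals^d}|\xi|^{|\beta|}e^{-M(a\xi)}\,d\xi$ to $C_2 M_\beta (K_0/a)^{|\beta|}$, the convergence of $\int_{\reals^d}e^{M(a\xi/K_0)-M(a\xi)}\,d\xi$ again following from \eqref{eq:M2'} provided $\log K_0/\log H>d$.

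To finish I would choose $K_0=H^{d+1}$, which is admissible ($\log K_0/\log H=d+1>d$) and, since $a=4\pi H^{d+1}/\ell$, makes $K_0/a=\ell/(4\pi)$, so that the factor $(2\pi)^{|\beta|}$ coming from $\xi^\beta$ combines with $(K_0/a)^{|\beta|}$ into $(\ell/2)^{|\beta|}$. Collecting everything and invoking $(M.1)$ in the form $M_{\alpha-\beta}M_\beta\le M_0 M_\alpha$, the binomial sum collapses to $\norm{g^{(\alpha)}}_{L^1_\omega}\le C\,K\,M_\alpha\,(\ell/2+h)^{|\alpha|}$. Taking the window parameter $h\le \ell/2$ (legitimate because $\psi$ lies in the \emph{Beurling} space $\sss^{(M_p)}_{(A_p)}$, where $h$ may be taken as small as we please, which simultaneously forces $1/h$ large as required above) gives $\norm{g^{(\alpha)}}_{L^1_\omega}\le C\,K\,\ell^{|\alpha|}M_\alpha$, whence $\norm{g}_{\mathcal{D}^{M_p,\ell}_{L^1_\omega}}\le C\,K<\infty$. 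I expect the main obstacle to be precisely this bookkeeping of constants: one must arrange the two applications of \eqref{eq:M2'} and the choice of $(K_0,h)$ so that the decay $e^{-M(a\xi)}$ of $F$ in $\xi$ is converted, through the $\xi^\beta$ produced by differentiation, into exactly the Gevrey growth $\ell^{|\beta|}M_\beta$ of the target seminorm — this is what pins down the constant $4\pi H^{d+1}$ in the hypothesis — while keeping both auxiliary integrals convergent.
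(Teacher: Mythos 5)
Your proof is correct and follows essentially the same route as the paper's: Leibniz's rule, the defining inequality for the associated function to trade $(2\pi|\xi|)^{|\beta|}$ for $M_\beta(\ell/2)^{|\beta|}e^{M(4\pi\xi/\ell)}$, condition $(M.1)$ to collapse the binomial sum, the $(A_p)$-admissibility of $\omega$ together with Fubini, and two applications of \eqref{eq:M2'} to make the auxiliary integrals $\int e^{A(\lambda s)-A(s/h)}\,ds$ and $\int e^{M(4\pi\xi/\ell)-M(4\pi H^{d+1}\xi/\ell)}\,d\xi$ converge. The only cosmetic difference is that the paper absorbs the sum over $\beta$ into a single pointwise bound on $\partial_t^{\alpha}[M_\xi T_x\psi(t)]$ before integrating, while you integrate term by term and recombine at the end.
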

	
	\begin{proof}
		  For any $\alpha \in \naturals^{d}$ we have that
			\begin{align*} 
				\left| \partial^{\alpha}_{t} [ M_{\xi} T_{x} \psi(t)] \right| &\leq \sum_{\beta \leq \alpha} {\alpha \choose \beta} (2 \pi |\xi|)^{|\beta|} |\psi^{(\alpha - \beta)}(t - x)| \\ 
				&\leq \norm{\psi}_{\sss^{M_{p}, \ell / 2}_{A_{p}, \lambda H^{d+1}}} \ell^{|\alpha|} M_{\alpha} \exp[M(4\pi \xi / \ell) - A(\lambda H^{d+1}(t - x))] . 
		\end{align*}
		Hence, 
			\begin{align*}
				&\norm{\int \int_{\reals^{2d}} F(x, \xi) M_{\xi} T_{x} \psi dx d\xi}_{\mathcal{D}^{M_{p}, \ell}_{L^{1}_{\omega}}} \\
				&\leq \norm{\psi}_{\sss^{M_{p}, \ell / 2}_{A_{p}, \lambda H^{d+1}}} \int_{\reals^{d}} \omega(t) \left( \int \int_{\reals^{2d}} |F(x, \xi)| \exp[M(4\pi \xi / \ell) - A(\lambda H^{d+1} (t - x))] dx d\xi\right) dt \\
				&\leq C \norm{\psi}_{\sss^{M_{p}, \ell / 2}_{A_{p}, \lambda H^{d+1}}} \int_{\reals^{d}} e^{M(4\pi \xi / \ell)} \left( \int_{\reals^{d}} |F(x, \xi)| \omega(x) \left( \int_{\reals^{d}} e^{A(\lambda(t - x)) - A(\lambda H^{d+1}(t - x))} dt \right) dx \right) d\xi \\
				&\leq C' \int_{\reals^{d}} e^{M(4\pi \xi / \ell) - M(4\pi H^{d+1} \xi / \ell)} d\xi.
			\end{align*}	
	\end{proof}
	
We are now able to characterize $\mathcal{D}^{\beurou}_{L^{1}_{\omega}}$ via the STFT.

	\begin{proposition}
		\label{p:Dl1STFTchar}
		Let $\psi \in \sss^{(M_{p})}_{(A_{p})}(\reals^{d}) \setminus \{0\}$ and let  $f \in \sss^{\prime \beurou}_{\dagger}(\reals^{d})$. Then, $f \in \mathcal{D}^{\beurou}_{L^{1}_{\omega}}$ if and only if
			\begin{equation}
				\label{eq:Dl1STFTchar}
				\forall q > 0 ~ (\exists q > 0) \, : \, \sup_{\xi \in \reals^{d}} e^{M(q \xi)} \norm{V_{\psi} f(\:\cdot\:, \xi)}_{L^{1}_{\omega}} < \infty .
			\end{equation}
		If $B \subset \mathcal{D}^{\beurou}_{L^{1}_{\omega}}$ is a bounded set, then \eqref{eq:Dl1STFTchar} holds uniformly over $B$.
	\end{proposition}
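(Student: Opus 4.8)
\emph{Plan.} The idea is to read off both directions from the two preceding growth lemmas together with the reconstruction and desingularization formulas for the STFT. Since $\psi \neq 0$, I take $\psi$ itself as synthesis window, so that \eqref{eq:reconstructSTFT-1} reads $\norm{\psi}_{L^{2}}^{-2} V_{\psi}^{*} \circ V_{\psi} = \id$; I also note that $f \in \sss^{\prime\beurou}_{\dagger}(\reals^{d}) \subset \sss^{\prime(M_{p})}_{(A_{p})}(\reals^{d})$, so these formulas are available for $f$. For the necessity, suppose $f \in \mathcal{D}^{\beurou}_{L^{1}_{\omega}}$, meaning $\norm{f}_{\mathcal{D}^{M_{p}, \ell}_{L^{1}_{\omega}}} < \infty$ for every $\ell > 0$ in the Beurling case and for some $\ell > 0$ in the Roumieu case. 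Given $q > 0$ (Beurling) I would choose $\ell \leq \pi/(q \sqrt{d})$; since $M$ is nondecreasing, Lemma \ref{l:Dl1STFTgrowth} yields $\norm{V_{\psi} f(\:\cdot\:, \xi)}_{L^{1}_{\omega}} \leq C' \norm{f}_{\mathcal{D}^{M_{p}, \ell}_{L^{1}_{\omega}}} e^{-M(\pi \xi/(\ell \sqrt{d}))} \leq C' \norm{f}_{\mathcal{D}^{M_{p}, \ell}_{L^{1}_{\omega}}} e^{-M(q \xi)}$, which is exactly \eqref{eq:Dl1STFTchar}. In the Roumieu case I instead take the $\ell$ furnished by membership and set $q = \pi/(\ell \sqrt{d})$.

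For the sufficiency, assume \eqref{eq:Dl1STFTchar}. The plan is to reconstruct $f$ from $V_{\psi} f$ and feed the result into Lemma \ref{l:Dl1STFTConj} applied to $F = V_{\psi} f$. Fix $\ell > 0$ and put $q = 4 \pi H^{d+1}/\ell$; this $q$ is admissible for every $\ell$ in the Beurling case, whereas in the Roumieu case the available $q$ dictates the single choice $\ell = 4 \pi H^{d+1}/q$. With this matching, \eqref{eq:Dl1STFTchar} is precisely the hypothesis of Lemma \ref{l:Dl1STFTConj}, so the function $g(t) := \int \int_{\reals^{2d}} V_{\psi} f(x, \xi) M_{\xi} T_{x} \psi(t) \, dx \, d\xi$ belongs to $\mathcal{D}^{M_{p}, \ell}_{L^{1}_{\omega}}$.

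The step requiring the most care, and which I expect to be the main obstacle, is identifying this pointwise weak integral $g$ with the ultradistribution $V_{\psi}^{*} V_{\psi} f = \norm{\psi}_{L^{2}}^{2} f$ coming from \eqref{eq:reconstructSTFT-1}. I would test $g$ against an arbitrary $\varphi \in \sss^{(M_{p})}_{(A_{p})}(\reals^{d})$: the growth bound in \eqref{eq:Dl1STFTchar} together with the rapid decay of $\varphi$ justifies Fubini's theorem interchanging the $t$-integration with the $(x, \xi)$-integration, and since $\int_{\reals^{d}} M_{\xi} T_{x} \psi(t) \varphi(t) \, dt = V_{\overline{\psi}} \varphi(x, -\xi)$, the expression $\ev{g}{\varphi}$ becomes exactly the right-hand side of the desingularization formula \eqref{eq:desing}. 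Hence $\ev{g}{\varphi} = \norm{\psi}_{L^{2}}^{2} \ev{f}{\varphi}$ for all such $\varphi$, so that $g = \norm{\psi}_{L^{2}}^{2} f$ in $\sss^{\prime(M_{p})}_{(A_{p})}(\reals^{d})$, and therefore $f = \norm{\psi}_{L^{2}}^{-2} g \in \mathcal{D}^{M_{p}, \ell}_{L^{1}_{\omega}}$. Letting $\ell$ range over all of $(0, \infty)$ in the Beurling case, or keeping the single $\ell$ in the Roumieu case, delivers $f \in \mathcal{D}^{\beurou}_{L^{1}_{\omega}}$.

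Finally, for the uniform statement over a bounded set $B \subset \mathcal{D}^{\beurou}_{L^{1}_{\omega}}$, I would track the constant $C' \norm{f}_{\mathcal{D}^{M_{p}, \ell}_{L^{1}_{\omega}}}$ in the necessity argument. In the Beurling case $B$ is bounded in the Fr\'echet space $\mathcal{D}^{(M_{p})}_{L^{1}_{\omega}}$, hence $\sup_{f \in B} \norm{f}_{\mathcal{D}^{M_{p}, \ell}_{L^{1}_{\omega}}} < \infty$ for every $\ell$; in the Roumieu case the regularity of the $(LB)$-space $\mathcal{D}^{\{M_{p}\}}_{L^{1}_{\omega}}$ established in Lemma \ref{l:topolDL1} places $B$ as a bounded subset of a single step $\mathcal{D}^{M_{p}, \ell}_{L^{1}_{\omega}}$, so again $\sup_{f \in B} \norm{f}_{\mathcal{D}^{M_{p}, \ell}_{L^{1}_{\omega}}} < \infty$. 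Inserting these uniform bounds into the estimate from Lemma \ref{l:Dl1STFTgrowth} makes \eqref{eq:Dl1STFTchar} hold uniformly over $B$, with the same $q$ as above.
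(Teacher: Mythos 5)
Your argument is correct and follows essentially the same route as the paper: necessity and the uniform statement from Lemma \ref{l:Dl1STFTgrowth} (plus the regularity of the $(LB)$-space from Lemma \ref{l:topolDL1} in the Roumieu case), and sufficiency by using the desingularization formula \eqref{eq:desing} to identify $f$ with the weak integral $\int\int_{\reals^{2d}} V_{\psi}f(x,\xi)\,M_{\xi}T_{x}\gamma\,dx\,d\xi$ and then invoking Lemma \ref{l:Dl1STFTConj} with $F = V_{\psi}f$. The only cosmetic difference is that you take $\gamma=\psi$ with the normalization $\norm{\psi}_{L^{2}}^{-2}$, whereas the paper picks $\gamma$ with $(\gamma,\psi)_{L^{2}}=1$; this changes nothing.
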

	
	\begin{proof}
		The direct implication and the fact that  \eqref{eq:Dl1STFTchar} holds uniformly over bounded sets follows immediately from Lemma \ref{l:Dl1STFTgrowth} (and, in the Roumieu case, Lemma \ref{l:topolDL1}). Conversely, suppose that \eqref{eq:Dl1STFTchar} holds and choose $\gamma \in \sss^{(M_{p})}_{(A_{p})}(\reals^{d})$ such that $(\gamma, \psi)_{L^{2}} = 1$. By \eqref{eq:desing}, we have that, for all $\varphi \in \sss^{\beurou}_{\dagger}(\reals^{d})$, 
			\begin{align*}
				\ev{f}{\varphi} &= \int \int_{\reals^{2d}} V_{\psi} f(x, \xi) V_{\overline{\gamma}} \varphi(x, -\xi) dx d\xi \\
				&= \int \int_{\reals^{2d}} V_{\psi} f(x, \xi) \left( \int_{\reals^{d}} \varphi(t) M_{\xi} T_{x} \gamma(t) dt \right) dx d\xi \\
				&= \int_{\reals^d} \left( \int \int_{\reals^{2d}} V_{\psi} f(x, \xi) M_{\xi} T_{x} \gamma(t) dx d\xi \right) \varphi(t) dt ,
			\end{align*}
		where the switching of the integrals in the last step is permitted because of \eqref{eq:Dl1STFTchar}. Hence,
			\[ f = \int \int_{\reals^{2d}} V_{\psi} f(x, \xi) M_{\xi} T_{x} \gamma dx d\xi \]
		and we may conclude that $f \in \mathcal{D}^{\beurou}_{L^{1}_{\omega}}$ by applying Lemma \ref{l:Dl1STFTConj} to $F = V_{\psi} f$.
	\end{proof}
	
Next, we treat $\dot{\mathcal{B}}^{\prime \beurou}_{\omega}$. We again need some preparation. We denote by $C_{\omega}(\reals^d)$ the Banach space consisting of all $f \in C(\reals^d)$ such that $\|f\|_{L^\infty_\omega} < \infty$ and by  $C_{0,\omega}(\reals^d)$ its closed subspace consisting of all elements $f$ such that $\lim_{|x| \rightarrow \infty} f(x) / \omega(x) = 0$.  We endow  $C_{0,\omega}(\reals^d)$  with the norm $\norm{\:\cdot \:}_{L^{\infty}_{\omega}}$. The dual of $C_{0,\omega}(\reals^d)$ is denoted by $\mathcal{M}^1_{\omega}$.  For every $\mu \in \mathcal{M}^{1}_{\omega}$ there is a unique regular complex Borel measure $\nu \in \mathcal{M}^{1} = (C_0(\reals^d))'$ such that
$$
\ev{\mu}{ \varphi} =  \int_{\reals^d} \frac{\varphi(x)}{\omega(x)} d\nu(x), \qquad \varphi \in C_{0,\omega}(\reals^d).
$$
Moreover, $\| \mu \|_{\mathcal{M}^{1}_{\omega}} =  \| \nu \|_{\mathcal{M}^{1}} = |\nu|(\reals^d).$
By \cite[Theorem 6.13]{Rudin}, the natural inclusion $L^1_\omega \subset \mathcal{M}^{1}_{\omega}$ holds topologically, that is,
\begin{equation}
		\label{eq:omegaintegralequiv}
		\| \varphi \|_{L^1_\omega} = \sup_{f \in B_{C_{0, \omega}}} \left| \int_{\reals^{d}} \varphi(x) f(x) dx \right| , \qquad \varphi \in L^{1}_{\omega},
\end{equation}
where $B_{C_{0, \omega}}$ denotes the unit ball in $C_{0, \omega}(\reals^d)$. We define
$$
C_{(M_p)}(\reals^d) := \varinjlim_{q \to \infty}  C_{e^{M(q\: \cdot \:)}}(\reals^d), \qquad C_{\{M_p\}}(\reals^d) := \varprojlim_{q \to 0^+}  C_{e^{M(q\: \cdot \:)}}(\reals^d).
$$
The following canonical isomorphisms of lcHs hold
$$
C_{\omega}(\reals^d_x) \widehat{\otimes}_\varepsilon C_{\{M_{p}\}} (\reals^{d}_\xi) \cong  \varprojlim_{q \to 0^+}  C_{\omega \otimes e^{M(q\: \cdot \:)}}(\reals^{2d}_{x,\xi})
$$
and
$$
C_{0,\omega}(\reals^d_x) \widehat{\otimes}_\varepsilon C_{\{M_{p}\}} (\reals^{d}_\xi) \cong  \varprojlim_{q \to 0^+}  C_{0, \omega \otimes e^{M(q\: \cdot \:)}}(\reals^{2d}_{x,\xi}).
$$
Similarly, in view of  \eqref{eq:M2'}, \cite[Theorem 3.1(d)]{B-M-S} and \cite[Theorem 3.7]{B-M-S} yield the following canonical isomorphisms of lcHs
$$
C_{\omega}(\reals^d_x) \widehat{\otimes}_\varepsilon C_{(M_{p})} (\reals^{d}_\xi) \cong  \varinjlim_{q \to \infty}  C_{\omega \otimes e^{M(q\: \cdot \:)}}(\reals^{2d}_{x,\xi})
$$
and
$$
C_{0,\omega}(\reals^d_x) \widehat{\otimes}_\varepsilon C_{(M_{p})} (\reals^{d}_\xi) \cong  \varinjlim_{q \to \infty}  C_{0, \omega \otimes e^{M(q\: \cdot \:)}}(\reals^{2d}_{x,\xi}).
$$
We are ready to establish the mapping properties of the STFT on $\dot{\mathcal{B}}^{\prime \beurou}_{\omega}$. 
	\begin{proposition}
		\label{p:contSTFTBdot}
		Let $\psi \in \sss^{(M_{p})}_{(A_{p})}(\reals^{d})$. The following mappings are continuous,
			\[ V_{\psi} : \dot{\mathcal{B}}^{\prime \ast}_{\omega} \rightarrow C_{0,\omega}(\reals^d_x) \widehat{\otimes}_\varepsilon C_{\ast} (\reals^{d}_\xi) \]
		and
			\[ V_{\psi}^{*} : C_{0,\omega}(\reals^d_x) \widehat{\otimes}_\varepsilon C_{\ast} (\reals^{d}_\xi) \rightarrow \dot{\mathcal{B}}^{\prime \ast}_{\omega}. \]
	\end{proposition}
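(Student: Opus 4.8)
The plan is to deduce both continuity statements from weighted $L^{1}$/$C_{0}$ estimates for the STFT together with a density argument, in close analogy with the treatment of $\mathcal{D}^{\ast}_{L^{1}_{\omega}}$ in Proposition \ref{p:Dl1STFTchar}. Throughout I would work through the canonical identifications established above, namely $C_{0,\omega}(\reals^{d}_{x})\widehat{\otimes}_{\varepsilon} C_{(M_p)}(\reals^{d}_{\xi}) \cong \varinjlim_{q\to\infty} C_{0,\omega\otimes e^{M(q\,\cdot\,)}}(\reals^{2d})$ in the Beurling case, and the projective analogue $\varprojlim_{q\to 0^{+}}$ in the Roumieu case, so that membership and continuity reduce to uniform weighted sup-bounds of the form $|F(x,\xi)|\leq C\,\omega(x)\,e^{M(q\xi)}$. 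I would also use repeatedly that $\dot{\mathcal{B}}^{\prime\ast}_{\omega}$ is by definition the closure of $\sss^{(M_p)}_{(A_p)}(\reals^{d})$ in $\mathcal{B}^{\prime\ast}_{\omega}$, and that $C_{0,\omega\otimes e^{M(q\,\cdot\,)}}$ is a closed subspace of $C_{\omega\otimes e^{M(q\,\cdot\,)}}$.

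For the mapping $V_{\psi}$, the main step is to estimate $\norm{\overline{M_{\xi}T_{x}\psi}}_{\mathcal{D}^{M_p,\ell}_{L^{1}_{\omega}}}$. Writing $V_{\psi}f(x,\xi)=\ev{f}{\overline{M_{\xi}T_{x}\psi}}$ and differentiating the modulation, the factors $\xi^{\alpha}$ produced by $M_{\xi}$ are absorbed into a factor $e^{M(q\xi)}$ with $q\sim 1/\ell$, while the $(A_{p})$-admissibility of $\omega$ yields $\omega(t)\leq C\omega(x)e^{A(\lambda(t-x))}$ and the rapid $e^{-A}$-decay of $\psi\in\sss^{(M_p)}_{(A_p)}(\reals^{d})$ makes the resulting $t$-integral converge with a factor $\omega(x)$ in front. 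This gives $|V_{\psi}f(x,\xi)|\leq C\,\omega(x)\,e^{M(q\xi)}$ uniformly for $f$ in a bounded subset of $\mathcal{B}^{\prime\ast}_{\omega}$, hence continuity of $V_{\psi}\colon \mathcal{B}^{\prime\ast}_{\omega}\to C_{\omega}(\reals^{d}_{x})\widehat{\otimes}_{\varepsilon} C_{\ast}(\reals^{d}_{\xi})$; the quantifiers on $\ell$ and $q$ match in both cases (for Roumieu, given $q$ one uses the seminorm indexed by $\ell\sim 1/q$). To replace $C_{\omega}$ by $C_{0,\omega}$ I would note that for $f\in\sss^{(M_p)}_{(A_p)}(\reals^{d})$ the function $V_{\psi}f$ lies in $C_{0,\omega}\widehat{\otimes}_{\varepsilon} C_{\ast}$, since $V_{\psi}f(\,\cdot\,,\xi)$ decays in $x$ faster than any $e^{-A(q\,\cdot\,)}$ whereas $\omega$ grows at most like $e^{A(\lambda\,\cdot\,)}$; by the continuity just proved and the closedness of the $C_{0,\omega}$-subspace, the image of the closure $\dot{\mathcal{B}}^{\prime\ast}_{\omega}$ is contained in $C_{0,\omega}\widehat{\otimes}_{\varepsilon} C_{\ast}$.

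For the adjoint $V_{\psi}^{*}$, I would argue by transposition: for $F$ in the domain and $\varphi\in\mathcal{D}^{\ast}_{L^{1}_{\omega}}$ one has $\ev{V_{\psi}^{*}F}{\varphi}=\ev{F}{\overline{V_{\psi}\overline\varphi}}=\iint_{\reals^{2d}}F(x,\xi)\,\overline{V_{\psi}\overline\varphi(x,\xi)}\,dx\,d\xi$. Bounding $|F(x,\xi)|\leq C\,\omega(x)\,e^{M(q\xi)}$ and invoking Lemma \ref{l:Dl1STFTgrowth} in the form $\norm{V_{\psi}\overline\varphi(\,\cdot\,,\xi)}_{L^{1}_{\omega}}\leq C'\norm{\varphi}_{\mathcal{D}^{M_p,\ell}_{L^{1}_{\omega}}}\,e^{-M(\pi\xi/(\ell\sqrt d))}$, the $x$-integration produces $\norm{\varphi}_{\mathcal{D}^{M_p,\ell}_{L^{1}_{\omega}}}$ and the remaining $\xi$-integral is finite once $\ell$ is taken small enough that $\int_{\reals^{d}} e^{M(q\xi)-M(\pi\xi/(\ell\sqrt d))}d\xi<\infty$, which holds by $(M.2)'$ and \eqref{eq:M2'}. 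This yields $|\ev{V_{\psi}^{*}F}{\varphi}|\leq C\norm{\varphi}_{\mathcal{D}^{M_p,\ell}_{L^{1}_{\omega}}}$, i.e.\ continuity of $V_{\psi}^{*}\colon C_{\omega}(\reals^{d}_{x})\widehat{\otimes}_{\varepsilon} C_{\ast}(\reals^{d}_{\xi})\to\mathcal{B}^{\prime\ast}_{\omega}$. To see that the image in fact lies in $\dot{\mathcal{B}}^{\prime\ast}_{\omega}$, I would use that $V_{\psi}^{*}$ maps $\sss_{(A_p)}(\reals^{d}_{x})\widehat{\otimes}\sss_{(M_p)}(\reals^{d}_{\xi})$ into $\sss^{(M_p)}_{(A_p)}(\reals^{d})\subset\dot{\mathcal{B}}^{\prime\ast}_{\omega}$ by Proposition \ref{p:contSTFT}, together with the density of this tensor product in $C_{0,\omega}\widehat{\otimes}_{\varepsilon} C_{\ast}$; continuity into $\mathcal{B}^{\prime\ast}_{\omega}$ then forces the whole image into the closure $\dot{\mathcal{B}}^{\prime\ast}_{\omega}$.

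The weighted differentiation and convolution bookkeeping in the two displayed estimates is routine, so the genuine difficulty is topological. The main obstacle I expect is to transport the scalar weighted sup- and $L^{1}$-bounds correctly through the $\varepsilon$-tensor products and their $\varinjlim/\varprojlim$ representations so as to obtain continuity (not merely that bounded sets go to bounded sets), and to match the quantifiers on $q$ and $\ell$ uniformly in the Beurling and Roumieu cases at once; in particular, in the Roumieu setting one must track the dependence of the constants on the defining parameter. Closely tied to this is the need to verify the density of $\sss_{(A_p)}\widehat{\otimes}\sss_{(M_p)}$ in $C_{0,\omega}\widehat{\otimes}_{\varepsilon} C_{\ast}$ and the closedness of the $C_{0,\omega}$-subspace inside the respective (LB)- and Fréchet-type limit spaces, both of which underpin the two density arguments above.
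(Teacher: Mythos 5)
Your proposal reproduces the paper's argument essentially step for step: the same weighted estimates (boundedness of $\{e^{-M(4\pi\xi/\ell)}\omega^{-1}(x)\overline{M_{\xi}T_{x}\psi}\}$ in $\mathcal{D}^{\ast}_{L^{1}_{\omega}}$ for $V_{\psi}$, and Lemma \ref{l:Dl1STFTgrowth} for $V_{\psi}^{*}$), followed by the same density/closedness reductions from $C_{\omega}$ to $C_{0,\omega}$ and from $\mathcal{B}^{\prime\ast}_{\omega}$ to $\dot{\mathcal{B}}^{\prime\ast}_{\omega}$. The one point you flag but leave open --- upgrading ``bounded sets go to bounded sets'' to continuity of $V_{\psi}$ --- is exactly what the paper supplies by noting that $\mathcal{B}^{\prime\ast}_{\omega}$ is bornological (a consequence of Lemma \ref{l:topolDL1}), and the density of $\sss_{(A_{p})}(\reals^{d}_{x})\widehat{\otimes}\sss_{(M_{p})}(\reals^{d}_{\xi})$ in $C_{0,\omega}(\reals^d_x)\widehat{\otimes}_{\varepsilon}C_{\ast}(\reals^d_\xi)$ is obtained there from the general permanence of dense continuous inclusions under $\widehat{\otimes}_{\varepsilon}$.
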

	
	\begin{proof}
		We first consider $V_{\psi}$. It suffices to show that $V_{\psi} : \mathcal{B}^{\prime *}_{\omega} \rightarrow C_{\omega}(\reals^{d}_{x}) \widehat{\otimes}_{\varepsilon} C_{*}(\reals^{d}_{\xi})$ is continuous. In fact, as $C_{0,\omega}(\reals^d_x) \widehat{\otimes}_\varepsilon C_{\ast} (\reals^{d}_\xi)$ is a closed topological subspace of $C_{\omega}(\reals^d_x) \widehat{\otimes}_\varepsilon C_{\ast} (\reals^{d}_\xi)$, the result would then follow from Proposition \ref{p:contSTFT} and the inclusion $\sss_{(A_{p})}(\reals^{d}_{x}) \widehat{\otimes} \sss_{(M_{p})}(\reals^{d}_{\xi}) \subset C_{0,\omega}(\reals^d_x) \widehat{\otimes}_\varepsilon C_{\ast} (\reals^{d}_\xi)$. Since $ \mathcal{B}^{\prime *}_{\omega}$ is bornological (see Lemma \ref{l:topolDL1} in the Beurling case), it suffices to show that $V_\psi(B)$ is bounded in $C_{\omega}(\reals^{d}_{x}) \widehat{\otimes}_{\varepsilon} C_{*}(\reals^{d}_{\xi})$ for all bounded sets $B \subset \mathcal{B}^{\prime *}_{\omega}$. For some $\ell > 0$ (for all $\ell > 0$) it holds that $\sup_{f \in B} \sup_{\varphi \in A} |\langle f, \varphi \rangle | < \infty$ for all $A \subset \mathcal{D}^{\ast}_{L^{1}_{\omega}}$ bounded with respect to the norm $\| \, \cdot  \, \|_{\mathcal{D}^{M_{p}, \ell}_{L^{1}_{\omega}}}$. As 
		$$\{ e^{-M(4 \pi \xi / \ell)} \omega^{-1}(x) \overline{M_{\xi} T_{x} \psi} : (x, \xi) \in \reals^{2d} \} \subset \mathcal{D}^{\ast}_{L^{1}_{\omega}}$$
		 is bounded  with respect to $\| \, \cdot \,  \|_{\mathcal{D}^{M_{p}, \ell}_{L^{1}_{\omega}}}$, it follows that
			\[ \sup_{f \in B} \sup_{(x, \xi) \in \reals^{2d}} e^{-M(4 \pi \xi / \ell)} \omega^{-1}(x) |V_{\psi} f(x, \xi)| < \infty . \]	
Next, we treat $V^\ast_{\psi}$.  Lemma \ref{l:Dl1STFTgrowth} implies that  $V_{\psi}^{*} : C_{\omega}(\reals^d_x) \widehat{\otimes}_\varepsilon C_{\ast} (\reals^{d}_\xi) \rightarrow \mathcal{B}^{\prime \ast}_{\omega}$ is continuous. We claim that $\sss_{(A_{p})}(\reals^{d}_{x}) \widehat{\otimes} \sss_{(M_{p})}(\reals^{d}_{\xi})$ is dense in $C_{0,\omega}(\reals^d_x) \widehat{\otimes}_\varepsilon C_{\ast} (\reals^{d}_\xi)$, whence  the result follows from Proposition \ref{p:contSTFT}. We now prove the claim. It is clear that $\sss_{(A_{p})}(\reals^{d})$ is dense in $C_{0,\omega}(\reals^d)$ and that $\sss_{(M_{p})}(\reals^{d})$ is dense in $C_{\ast} (\reals^{d})$. Hence, the claim is a consequence of  the following general fact: Let $E,E_0,F,F_0$ be lcHs such that $E_0 \subseteq E$ and $F_0 \subseteq F$ with dense continuous inclusions. Then,  $E_0 \widehat{\otimes}_\varepsilon F_0$ is dense in $E \widehat{\otimes}_\varepsilon F$.	
	\end{proof}
	\begin{corollary}\label{lct}
	 $\dot{\mathcal{B}}^{\prime (M_p)}_{\omega}$ is a complete $(LB)$-space, and $\dot{\mathcal{B}}^{\prime \{M_p\}}_{\omega}$ is a quasinormable  Fr\'echet space.
	\end{corollary}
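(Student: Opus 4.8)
The plan is to use the short-time Fourier transform to realize $\dot{\mathcal{B}}^{\prime \ast}_{\omega}$ as a complemented subspace of the space $X_\ast := C_{0,\omega}(\reals^d_x) \widehat{\otimes}_\varepsilon C_{\ast}(\reals^d_\xi)$, whose linear topological type is transparent, and then to transfer the desired structural properties across the complementation. Fix $\psi \in \sss^{(M_p)}_{(A_p)}(\reals^d)\setminus\{0\}$ together with a synthesis window $\gamma \in \sss^{(M_p)}_{(A_p)}(\reals^d)$ normalized so that $(\gamma,\psi)_{L^2}=1$. By Proposition \ref{p:contSTFTBdot}, both $V_\psi : \dot{\mathcal{B}}^{\prime \ast}_{\omega} \to X_\ast$ and $V_\gamma^* : X_\ast \to \dot{\mathcal{B}}^{\prime \ast}_{\omega}$ are continuous, while the reconstruction formula \eqref{eq:reconstructSTFT-1}, valid on $\sss^{\prime (M_p)}_{(A_p)}(\reals^d) \supseteq \dot{\mathcal{B}}^{\prime \ast}_{\omega}$, gives $V_\gamma^* \circ V_\psi = \id$ on $\dot{\mathcal{B}}^{\prime \ast}_{\omega}$. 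Hence $V_\psi$ is a topological isomorphism onto its range and $P := V_\psi \circ V_\gamma^*$ is a continuous projection of $X_\ast$ onto $V_\psi(\dot{\mathcal{B}}^{\prime \ast}_{\omega}) \cong \dot{\mathcal{B}}^{\prime \ast}_{\omega}$; thus $\dot{\mathcal{B}}^{\prime \ast}_{\omega}$ is (isomorphic to) a complemented subspace of $X_\ast$.

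In the Beurling case, $X_{(M_p)} \cong \varinjlim_{q\to\infty} C_{0,\omega\otimes e^{M(q\,\cdot\,)}}(\reals^{2d})$ is a countable inductive limit of Banach spaces, hence an $(LB)$-space, and it is complete since it is a completed $\varepsilon$-tensor product. A complemented subspace of $X_{(M_p)}$ is closed, being the kernel of $\id - P$, and is therefore complete. It is moreover again an $(LB)$-space: a complete $(LB)$-space is regular, so that every bounded set lies and is bounded in some step $C_{0,\omega\otimes e^{M(q\,\cdot\,)}}$, and intersecting these steps with the complemented subspace produces a defining sequence of Banach spaces whose inductive topology coincides, through the projection $P$, with the induced topology. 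This yields that $\dot{\mathcal{B}}^{\prime (M_p)}_{\omega}$ is a complete $(LB)$-space.

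In the Roumieu case, $X_{\{M_p\}} \cong \varprojlim_{q\to 0^+} C_{0,\omega\otimes e^{M(q\,\cdot\,)}}(\reals^{2d})$ is a countable projective limit of Banach spaces, hence a Fr\'echet space, and a complemented (thus closed) subspace of it is again Fr\'echet. For quasinormability I would first show that $X_{\{M_p\}}$ itself is quasinormable by verifying the criterion used in the proof of Lemma \ref{l:topolDL1}: writing $v_q = \omega \otimes e^{M(q\,\cdot\,)}$, given $q$ and $\varepsilon > 0$ choose a smaller $q' < q$ and split any $f$ with $|f| \le v_{q'}$ as $f = \chi f + (1-\chi)f$, where $\chi(\xi)$ is a smooth cutoff equal to $1$ for $|\xi| \le R$; the tail $(1-\chi)f$ lies in $\varepsilon$ times the $v_q$-ball once $R$ is large, because \eqref{eq:M2'} forces $v_{q'}/v_q = e^{M(q'\xi)-M(q\xi)} \to 0$ as $|\xi|\to\infty$, while the truncations $\chi f$ range over a bounded subset of $X_{\{M_p\}}$. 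Since quasinormability is inherited by quotients, and a complemented subspace is a quotient by the complementary projection, it follows that $\dot{\mathcal{B}}^{\prime \{M_p\}}_{\omega}$ is a quasinormable Fr\'echet space.

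The main obstacle is the transfer of the $(LB)$-property across the complementation in the Beurling case: closed subspaces of $(LB)$-spaces need not be $(LB)$-spaces, so it is essential to exploit the complementation jointly with the regularity of the complete $(LB)$-space $X_{(M_p)}$, and the delicate point is precisely to check that the inductive topology on the Banach steps $G \cap C_{0,\omega\otimes e^{M(q\,\cdot\,)}}$ agrees with the subspace topology inherited from $X_{(M_p)}$. The analogous, and easier, crux in the Roumieu case is establishing quasinormability of the ambient weighted projective limit, for which the estimate \eqref{eq:M2'} is exactly what is required.
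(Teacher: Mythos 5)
Your proof is correct and follows essentially the same route as the paper: Proposition \ref{p:contSTFTBdot} together with the reconstruction formula \eqref{eq:reconstructSTFT-1} realizes $\dot{\mathcal{B}}^{\prime \ast}_{\omega}$ as a complemented subspace of $C_{0,\omega}(\reals^d_x)\widehat{\otimes}_\varepsilon C_{\ast}(\reals^d_\xi)$, and the two properties are then transferred from that ambient space (where the paper simply cites \cite[Proposition 2]{B-E} for quasinormability, you verify it directly from \eqref{eq:M2'}, which is fine). The one place you work harder than necessary is the Beurling case: $\im P$ is closed, hence complete, and is topologically isomorphic to the quotient $X_{(M_p)}/\ker P$, and quotients of $(LB)$-spaces by closed subspaces are again $(LB)$-spaces, so the delicate identification of the induced inductive topology on the intersected steps that you flag as the main obstacle can be avoided entirely.
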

	\begin{proof}
	Proposition \ref{p:contSTFTBdot} and the reconstruction formula \eqref{eq:reconstructSTFT-1} imply that $\dot{\mathcal{B}}^{\prime \beurou}_{\omega}$ is isomorphic to a complemented subspace of $C_{0,\omega}(\reals^d) \widehat{\otimes}_\varepsilon C_{\ast} (\reals^{d})$. Hence, it suffices to notice that $C_{0,\omega}(\reals^d) \widehat{\otimes}_\varepsilon C_{(M_p)} (\reals^{d})$ is an $(LB)$-space  that is complete and $C_{0,\omega}(\reals^d) \widehat{\otimes}_\varepsilon C_{\{M_p\}} (\reals^{d})$ is a quasinormable Fr\'echet space by \cite[Proposition 2]{B-E}. 
	\end{proof}
	
Proposition \ref{p:contSTFTBdot} allows for the following characterization of $\dot{\mathcal{B}}^{\prime \beurou}_{\omega}$ via the STFT.

	\begin{theorem}
		\label{t:Bdotequiv}
		Let $\psi \in \sss^{(M_{p})}_{(A_{p})}(\reals^{d}) \setminus \{0\}$ and let $f \in \sss^{\prime \beurou}_{\dagger}(\reals^{d})$. The following statements are equivalent:
			\begin{enumerate}
				\item $f \in \dot{\mathcal{B}}^{\prime \beurou}_{\omega}$.
				\item $\lim_{|h| \rightarrow \infty} T_{-h} f / \omega(h) = 0$ in $\sss^{\prime \beurou}_{\dagger}(\reals^{d})$.
				\item For some $q > 0$ (for all $q > 0$) it holds that
					\begin{equation}
						\label{eq:STFTvanish}
						\lim_{|(x, \xi)| \rightarrow \infty} e^{-M(q \xi)} \frac{|V_{\psi} f(x, \xi)|}{\omega(x)} = 0 . 
					\end{equation}
				\end{enumerate}
	\end{theorem}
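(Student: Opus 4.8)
The plan is to build the proof on the equivalence $(1)\Leftrightarrow(3)$, which the short-time Fourier transform delivers almost for free, and then to thread $(2)$ through it by proving $(1)\Rightarrow(2)$ and $(2)\Rightarrow(1)$. Everything rests on the covariance of the STFT under translation: a direct calculation gives $V_\psi(T_{-h}f)(x,\xi)=e^{2\pi i\xi\cdot h}V_\psi f(x+h,\xi)$, so that $|V_\psi(T_{-h}f/\omega(h))(x,\xi)|=|V_\psi f(x+h,\xi)|/\omega(h)$; in particular, evaluating at $x=0$, the pairing $\langle T_{-h}f/\omega(h),\overline{M_\xi\psi}\rangle$ equals $V_\psi f(h,\xi)/\omega(h)$ up to a unimodular factor.

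For $(1)\Rightarrow(3)$ I would invoke Proposition \ref{p:contSTFTBdot}: if $f\in\dot{\mathcal{B}}^{\prime\ast}_{\omega}$ then $V_\psi f\in C_{0,\omega}(\reals^{d}_{x})\widehat{\otimes}_{\varepsilon}C_{\ast}(\reals^{d}_{\xi})$, and unravelling the canonical isomorphism $C_{0,\omega}\widehat{\otimes}_{\varepsilon}C_{(M_{p})}\cong\varinjlim_{q}C_{0,\omega\otimes e^{M(q\cdot)}}$ (respectively its projective analogue in the Roumieu case) is exactly the vanishing \eqref{eq:STFTvanish} with the stated quantifier on $q$. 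Conversely, $(3)$ says precisely that $V_\psi f$ lies in $C_{0,\omega}\widehat{\otimes}_{\varepsilon}C_{\ast}$; choosing a synthesis window $\gamma$ with $(\gamma,\psi)_{L^{2}}=1$, applying the continuous map $V_{\gamma}^{*}$ of Proposition \ref{p:contSTFTBdot} and the reconstruction formula \eqref{eq:reconstructSTFT-1} gives $f=V_{\gamma}^{*}V_\psi f\in\dot{\mathcal{B}}^{\prime\ast}_{\omega}$. This settles $(1)\Leftrightarrow(3)$ with the correct Beurling/Roumieu quantifiers built in automatically.

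For $(1)\Rightarrow(2)$ I would argue by density, since $\dot{\mathcal{B}}^{\prime\ast}_{\omega}$ is the closure of $\sss^{(M_{p})}_{(A_{p})}(\reals^{d})$ in $\mathcal{B}^{\prime\ast}_{\omega}$. First I check $(2)$ for $\varphi\in\sss^{(M_{p})}_{(A_{p})}$: writing $\langle T_{-h}\varphi/\omega(h),\phi\rangle=\omega(h)^{-1}\int\varphi(s)\phi(s-h)\,ds$ and using $(A_{p})$-admissibility in the form $\omega(h)^{-1}\le C\omega(s)^{-1}e^{A(\lambda(s-h))}$ together with the rapid decay of $\phi$, dominated convergence shows that this tends to $0$ as $|h|\to\infty$, uniformly for $\phi$ in a bounded set. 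To pass to a general $f$ I would exploit the uniform bound $\omega(h)^{-1}\|T_{h}\phi\|_{\mathcal{D}^{M_{p},\ell}_{L^{1}_{\omega}}}\le C\sup_{\alpha}(\ell^{|\alpha|}M_{\alpha})^{-1}\int|\phi^{(\alpha)}(y)|e^{A(\lambda y)}\,dy$, again a consequence of admissibility and independent of $h$; this makes the family $\{T_{-h}(\cdot)/\omega(h)\}_{h}$ equicontinuous from $\mathcal{B}^{\prime\ast}_{\omega}$ into $\sss^{\prime\ast}_{\dagger}$, so the convergence on the dense subspace propagates to all of $\dot{\mathcal{B}}^{\prime\ast}_{\omega}$.

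The implication $(2)\Rightarrow(1)$ is where the real work lies, and I would prove it by regularization rather than through the STFT. From $(2)$ the family $\{T_{-x}f/\omega(x):x\in\reals^{d}\}$ is bounded in $\sss^{\prime\ast}_{\dagger}$ (the union of the convergent family for $|x|\ge 1$ and a compact family for $|x|\le 1$), hence equicontinuous by barrelledness. Fixing $\chi\in\sss^{(M_{p})}_{(A_{p})}$ with $\int\chi=1$ and setting $\chi_{\varepsilon}=\varepsilon^{-d}\chi(\cdot/\varepsilon)$, equicontinuity bounds each $\|(f*\chi_{\varepsilon})^{(\alpha)}\|_{L^{\infty}_{\omega}}$ and places $f*\chi_{\varepsilon}$ in $\mathcal{B}^{\prime\ast}_{\omega}$, while pairing the \emph{fixed} test function $\check{\chi_{\varepsilon}^{(\alpha)}}$ against $T_{-x}f/\omega(x)$ and using $(2)$ gives $(f*\chi_{\varepsilon})^{(\alpha)}(x)/\omega(x)\to 0$ as $|x|\to\infty$; after truncation by compactly supported cut-offs this puts $f*\chi_{\varepsilon}\in\dot{\mathcal{B}}^{\prime\ast}_{\omega}$. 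A standard mollification estimate then yields $f*\chi_{\varepsilon}\to f$ in $\mathcal{B}^{\prime\ast}_{\omega}$, and since $\dot{\mathcal{B}}^{\prime\ast}_{\omega}$ is closed, indeed complete by Corollary \ref{lct}, we conclude $f\in\dot{\mathcal{B}}^{\prime\ast}_{\omega}$. The point I would stress is that testing against fixed windows is essential here: a direct attempt to read $(3)$ off from $(2)$ founders in the Roumieu case, because the modulated windows $e^{-M(q\xi)}\overline{M_\xi\psi}$ do not form a bounded set for small $q$ (their $\sss$-norms grow like $e^{M(2\pi\xi)}$), so the small-$q$ decay in $\xi$ cannot be extracted from the translation limit alone and must be routed through membership in $\mathcal{B}^{\prime\ast}_{\omega}$, which the regularization argument supplies. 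This regularization-and-closure step is the main obstacle.
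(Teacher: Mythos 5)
Your treatment of $(1)\Leftrightarrow(3)$ and of $(1)\Rightarrow(2)$ is fine and close to the paper's (the paper proves the cycle $(1)\Rightarrow(2)\Rightarrow(3)\Rightarrow(1)$; your direct $(1)\Rightarrow(3)$ via Proposition \ref{p:contSTFTBdot} is a legitimate shortcut). The problem is your $(2)\Rightarrow(1)$. The step ``a standard mollification estimate then yields $f*\chi_{\varepsilon}\to f$ in $\mathcal{B}^{\prime\ast}_{\omega}$'' hides the crux of the whole theorem. Convergence in $\mathcal{B}^{\prime\ast}_{\omega}=(\mathcal{D}^{\ast}_{L^{1}_{\omega}})'_{b}$ means $\langle f,\check{\chi}_{\varepsilon}*\varphi-\varphi\rangle\to 0$ uniformly on bounded subsets of $\mathcal{D}^{\ast}_{L^{1}_{\omega}}$; to control this you must already know that $f$ acts continuously on $\mathcal{D}^{\ast}_{L^{1}_{\omega}}$, i.e.\ that $f\in\mathcal{B}^{\prime\ast}_{\omega}$ — but at that stage you only know $f\in\sss^{\prime\ast}_{\dagger}$ with equicontinuous translates. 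Nor can you sidestep this by showing $(f*\chi_{\varepsilon})_{\varepsilon}$ is Cauchy in $\mathcal{B}^{\prime\ast}_{\omega}$ and using completeness of $\dot{\mathcal{B}}^{\prime\ast}_{\omega}$: the natural estimate reduces Cauchyness to $p(\check{\chi}_{\varepsilon}-\check{\chi}_{\varepsilon'})\to 0$ for a continuous seminorm $p$ on $\sss^{\ast}_{\dagger}$, which fails since $\chi_{\varepsilon}$ converges only distributionally to $\delta$. Deducing $f\in\mathcal{B}^{\prime\ast}_{\omega}$ from boundedness of translates is exactly the first structure theorem for bounded ultradistributions (\cite[Theorem 1]{N-V-asympboundMAE}, or a desingularization argument combining \eqref{eq:desing} with Lemma \ref{l:Dl1STFTgrowth}); it is genuinely nontrivial in the quasianalytic setting and must be supplied, not assumed.

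Two further remarks. First, the obstruction you cite as your reason for avoiding a direct $(2)\Rightarrow(3)$ is not real: in the Roumieu case $*_{f}$ is continuous on the closure of $\sss^{\{M_{p}\}}_{\{A_{p}\}}(\reals^{d})$ inside \emph{every} Banach step $\sss^{M_{p},\ell}_{A_{p},1/\ell}(\reals^{d})$, so for a given $q>0$ one simply chooses $\ell$ large enough (say $\ell> 4\pi/q$) to make $\{e^{-M(q\xi)}M_{\xi}\check{\overline{\psi}}:\xi\in\reals^{d}\}$ relatively compact there; this is precisely how the paper's proof of $(2)\Rightarrow(3)$ proceeds, and it avoids the membership issue above entirely. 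Second, your intermediate claims in $(2)\Rightarrow(1)$ — that $f*\chi_{\varepsilon}\in C_{0,\omega}(\reals^{d})$ and that $C_{0,\omega}(\reals^{d})\subset\dot{\mathcal{B}}^{\prime\ast}_{\omega}$ via truncation — are correct, but they do not rescue the argument without the missing convergence step.
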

	
	\begin{proof}
		$(1) \Rightarrow (2)$: Since $\sss^{\beurou}_{\dagger}(\reals^{d})$ is Montel, it suffices to show that  $\lim_{|h| \rightarrow \infty} T_{-h} f / \omega(h) = 0$ weakly in $\sss^{\prime \beurou}_{\dagger}(\reals^{d})$. Take any $\varphi \in \sss^{\beurou}_{\dagger}(\reals^{d})$ and let $\varepsilon > 0$ be arbitrary. The set $\{ T_{h} \varphi / \omega(h) : h \in \reals^{d} \}$ is bounded in $\mathcal{D}^{\beurou}_{L^{1}_{\omega}}$. Hence, there is  $\chi \in \sss^{\beurou}_{\dagger}(\reals^{d})$ such that $|\ev{T_{-h}(f - \chi)}{\varphi}| \leq \varepsilon \omega(h)$ for all $h \in \reals^{d}$. We obtain that
			\[ \limsup_{|h| \rightarrow \infty} \frac{|\ev{T_{-h} f}{\varphi}|}{\omega(h)} \leq \varepsilon + \lim_{|h| \rightarrow \infty} \frac{1}{\omega(h)} \left| \int_{\reals^{d}} \varphi(t - h) \chi(t) dt \right| = \varepsilon . \]
			
		$(2) \Rightarrow (3)$: We only treat the Beurling case as the Roumieu case is similar. Since  the mapping
		\[ *_{f} : \sss^{(M_{p})}_{(A_{p})}(\reals^{d}) \rightarrow C_{(A_{p})}(\reals^{d}), \, \varphi \mapsto f * \varphi \]
		is  continuous and our assumption yields that $*_f( \sss^{(M_{p})}_{(A_{p})}(\reals^{d})) \subset C_{0,\omega}(\reals^d)$, we may infer from the closed graph theorem that  $*_{f} : \sss^{(M_{p})}_{(A_{p})}(\reals^{d}) \rightarrow C_{0, \omega}(\reals^d)$ is continuous. Hence, there is $\ell > 0$ such that $*_f$ can be uniquely extended to a  continuous linear mapping $*_{f} : \overline{\sss^{(M_{p})}_{(A_{p})}(\reals^{d})}^{\sss^{M_{p}, \ell}_{A_{p}, 1/\ell}(\reals^{d})} \rightarrow C_{0, \omega}(\reals^d)$. Fix $q' > 4\pi/l$. As $\{  e^{-M(q' \xi)}M_{\xi} \check{\overline{\psi}}  : \xi \in \reals^{d} \}$ is relatively compact in $\overline{\sss^{(M_{p})}_{(A_{p})}(\reals^{d})}^{\sss^{M_{p}, \ell}_{A_{p}, 1/\ell}(\reals^{d})}$, we obtain that
	$$
	\{e^{-M(q' \xi)}|V_\psi f(x,\xi)|  : \xi \in \reals^{d} \} = \{ \ast_f(e^{-M(q' \xi)} M_{\xi} \check{\overline{\psi}}) : \xi \in \reals^{d} \} 
	$$	
is relatively compact in $C_{0,\omega}(\reals^d)$. This implies that
$$
\lim_{|x| \rightarrow \infty}\sup_{\xi \in \reals^d} e^{-M(q' \xi)} \frac{|V_{\psi} f(x, \xi)|}{\omega(x)} = 0,
$$		
whence \eqref{eq:M2'} implies that \eqref{eq:STFTvanish} holds for any $q > q'$.

		$(3) \Rightarrow (1)$:  $(3)$ means that $V_{\psi} f \in C_{0,\omega}(\reals^d_x) \widehat{\otimes}_\varepsilon C_{\ast} (\reals^{d}_\xi)$. The result therefore follows from Proposition  \ref{p:contSTFTBdot} and the reconstruction formula \eqref{eq:reconstructSTFT-1}.		
	\end{proof}
	
In the non-quasianalytic case, we additionally have that:

	\begin{theorem}
		\label{t:BdotequivNQA}
		Let $f \in \mathcal{D}^{\prime \beurou}(\reals^{d})$. Then, $f \in \dot{\mathcal{B}}^{\prime \beurou}_{\omega}$ if and only if $\lim_{|h| \rightarrow \infty} T_{-h} f / \omega(h) = 0$ in $\mathcal{D}^{\prime \beurou}(\reals^{d})$.
	\end{theorem}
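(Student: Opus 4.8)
The plan is to deduce this from the STFT characterization already established in Theorem~\ref{t:Bdotequiv}. We work in the non-quasianalytic setting (so that $\mathcal{D}^{\beurou}(\reals^{d})$ is non-trivial) and read the statement through the injective inclusions $\dot{\mathcal{B}}^{\prime \beurou}_{\omega} \subset \sss^{\prime \beurou}_{\dagger}(\reals^{d}) \hookrightarrow \mathcal{D}^{\prime \beurou}(\reals^{d})$, the last map being the transpose of the dense continuous inclusion $\mathcal{D}^{\beurou}(\reals^{d}) \hookrightarrow \sss^{\beurou}_{\dagger}(\reals^{d})$. The forward implication is then immediate: if $f \in \dot{\mathcal{B}}^{\prime \beurou}_{\omega}$, Theorem~\ref{t:Bdotequiv}\,$(1)\Rightarrow(2)$ yields $T_{-h}f/\omega(h) \to 0$ in $\sss^{\prime \beurou}_{\dagger}(\reals^{d})$; since every bounded subset of $\mathcal{D}^{\beurou}(\reals^{d})$ is bounded in $\sss^{\beurou}_{\dagger}(\reals^{d})$, the strong zero-neighbourhoods of $\sss^{\prime \beurou}_{\dagger}(\reals^{d})$ restrict to strong zero-neighbourhoods of $\mathcal{D}^{\prime \beurou}(\reals^{d})$, so the limit persists there.

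For the converse I would fix a compactly supported window $\psi \in \mathcal{D}^{(M_{p})}(\reals^{d}) \setminus \{0\} \subset \sss^{(M_{p})}_{(A_{p})}(\reals^{d})$ (available by non-quasianalyticity, with $A_{p}$ as in Assumption~\ref{assumption}). Because $\psi$ has compact support, $V_{\psi}f$ is a well-defined smooth function on $\reals^{2d}$ already for $f \in \mathcal{D}^{\prime \beurou}(\reals^{d})$, and using $\overline{M_{\xi}T_{x}\psi} = e^{-2\pi i \xi \cdot x} T_{x}(M_{-\xi}\overline{\psi})$ one obtains the key identity
$$
V_{\psi}f(x,\xi) = e^{-2\pi i \xi \cdot x}\ev{T_{-x}f}{M_{-\xi}\overline{\psi}}, \qquad \text{so} \qquad \frac{|V_{\psi}f(x,\xi)|}{\omega(x)} = \left| \ev{T_{-x}f/\omega(x)}{M_{-\xi}\overline{\psi}} \right| .
$$
The strong convergence hypothesis makes $B := \{T_{-x}f/\omega(x) : x \in \reals^{d}\}$ a bounded, hence (as $\mathcal{D}^{\beurou}(\reals^{d})$ is barrelled) equicontinuous subset of $\mathcal{D}^{\prime \beurou}(\reals^{d})$. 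Feeding the functions $M_{-\xi}\overline{\psi}$, all supported in $\supp\psi$, into the corresponding equicontinuity estimate and bounding the relevant $\mathcal{D}^{\beurou}$-seminorm by the standard modulation estimate $\lesssim C e^{M(q\xi)}$ (a consequence of $(M.2)'$ and the definition of the associated function), I obtain a uniform-in-$x$ growth bound: for some $q>0$ (for all $q>0$), $\sup_{x \in \reals^{d}} |V_{\psi}f(x,\xi)|/\omega(x) \leq C e^{M(q\xi)}$ for all $\xi \in \reals^{d}$. Combined with the $(A_{p})$-admissibility bound $\omega(x+t) \leq C\omega(x)e^{A(\lambda t)}$, this shows $V_{\psi}f$ has exactly the growth characterizing the STFT of an element of $\sss^{\prime \beurou}_{\dagger}(\reals^{d})$; applying the synthesis operator $V_{\gamma}^{*}$ and the reconstruction formula~\eqref{eq:reconstructSTFT-1}, and checking that the resulting ultradistribution agrees with $f$ on $\mathcal{D}^{\beurou}(\reals^{d})$, places $f \in \sss^{\prime \beurou}_{\dagger}(\reals^{d})$.

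It then suffices to verify condition~(3) of Theorem~\ref{t:Bdotequiv}, i.e.\ that \eqref{eq:STFTvanish} holds. I would split the limit $|(x,\xi)| \to \infty$ into two regimes. For $|\xi|$ large, the uniform growth bound together with \eqref{eq:M2'}, choosing the damping exponent strictly larger than the growth exponent, forces $e^{-M(q\xi)}|V_{\psi}f(x,\xi)|/\omega(x) \to 0$ uniformly in $x$. For $\xi$ in a fixed ball and $|x| \to \infty$, the family $\{M_{-\xi}\overline{\psi} : |\xi| \leq R\}$ is compact in $\mathcal{D}^{\beurou}(\reals^{d})$, so the strong convergence of $T_{-x}f/\omega(x)$ to $0$ is uniform over it, whence $|V_{\psi}f(x,\xi)|/\omega(x) \to 0$ uniformly for $|\xi| \leq R$. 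Together these give \eqref{eq:STFTvanish}, and Theorem~\ref{t:Bdotequiv}\,$(3)\Rightarrow(1)$ concludes $f \in \dot{\mathcal{B}}^{\prime \beurou}_{\omega}$.

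The hard part will be the passage from the purely local $\mathcal{D}^{\prime \beurou}$-hypothesis to the global, uniform-in-$x$ growth estimate: namely converting equicontinuity in $\mathcal{D}^{\prime \beurou}(\reals^{d})$ into a bound of the form $Ce^{M(q\xi)}$ with the correct Beurling/Roumieu dependence of $q$ on the equicontinuity datum, and then identifying the reconstructed object with $f$. Here the admissibility of $\omega$ is precisely what allows the local estimate to be summed against the weight so as to reach $\sss^{\prime \beurou}_{\dagger}(\reals^{d})$, after which the clean separation of the STFT vanishing into the two regimes above is routine.
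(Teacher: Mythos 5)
Your proposal is correct, but for the sufficiency half it takes a genuinely different route from the paper's. The paper settles the membership $f \in \sss^{\prime \beurou}_{\dagger}(\reals^{d})$ in one stroke by citing the structure theorem for bounded ultradistributions from \cite{N-V-asympboundMAE} (the hypothesis makes $\{T_{-h}f/\omega(h)\}$ bounded in $\mathcal{D}^{\prime \beurou}(\reals^{d})$, hence $f \in \mathcal{B}^{\prime \beurou}_{\omega} \subset \sss^{\prime \beurou}_{\dagger}(\reals^{d})$), and then obtains \eqref{eq:STFTvanish} by rerunning the closed-graph/relative-compactness argument of $(2)\Rightarrow(3)$ of Theorem \ref{t:Bdotequiv} with a compactly supported window. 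You instead rebuild the membership in $\sss^{\prime \beurou}_{\dagger}(\reals^{d})$ from scratch — equicontinuity of the translates (barrelledness of $\mathcal{D}^{\beurou}(\reals^{d})$) plus the modulation estimate gives $|V_{\psi}f(x,\xi)| \lesssim \omega(x)e^{M(q\xi)} \lesssim e^{A(\lambda x)+M(q\xi)}$, and synthesis recovers $f$ — and you then verify \eqref{eq:STFTvanish} by a two-regime split in place of the compactness argument. Both routes are valid and both funnel through Theorem \ref{t:Bdotequiv}$(3)\Rightarrow(1)$; the paper's buys brevity by outsourcing the one genuinely global step to the companion paper, while yours is self-contained at the price of the step you rightly flag as the hard one, namely justifying $V_{\gamma}^{*}V_{\psi}f = f$ when a priori $f$ only lives in $\mathcal{D}^{\prime \beurou}(\reals^{d})$. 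To close that step cleanly you should take both $\psi$ and the synthesis window $\gamma$ in $\mathcal{D}^{(M_{p})}(\reals^{d})$: for $\varphi \in \mathcal{D}^{\beurou}(\reals^{d})$ the factor $\varphi(t)\overline{\psi(t-x)}$ then confines the $x$-integration in the desingularization identity to a compact set and keeps all test functions in a fixed $\mathcal{D}^{\beurou}_{K}$, so the exchange of $\ev{f}{\cdot}$ with the integrals is legitimate and Fourier inversion in $\xi$ finishes the identification. With that detail supplied, your argument — including the Beurling/Roumieu bookkeeping for $q$ — is complete.
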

	
	\begin{proof}
		Necessity follows immediately from Theorem \ref{t:Bdotequiv}. To show sufficiency, we notice that $f \in \mathcal{B}^{\prime \beurou}_{\omega} \subset \sss^{\prime \beurou}_{\dagger}(\reals^{d})$ by \cite[Theorem 1]{N-V-asympboundMAE}. Next, one  may  obtain \eqref{eq:STFTvanish} for some $q > 0$ (for all $q > 0$) by taking a window function $\psi \in \mathcal{D}^{(M_{p})}(\reals^{d}) \setminus \{0\}$ and making minor adjustments in the proof of $(2) \Rightarrow (3)$ in Theorem \ref{t:Bdotequiv}. Hence, the result follows from Theorem \ref{t:Bdotequiv}.
	\end{proof}

\section{The structure of $\dot{\mathcal{B}}_{\omega}^{\prime \beurou}$}\label{sect-main}
The goal of this section is to prove Theorem \ref{t:structuraltheorem}. As before, we fix a weight sequence $M_p$ satisfying $(M.1)$ and $(M.2)'$, and a weight function $\omega$ such that Assumption \ref{assumption} holds. We will work with the following spaces of vector-valued multi-sequences.
Let $E$ be a Banach space. For  $\ell > 0$ we define  $\Lambda_{M_{p}, \ell}(E)$ as the Banach space consisting of all (multi-indexed) sequences $(e_{\alpha})_{\alpha \in \naturals^{d}} \in E^{\naturals^{d}}$ such that
	\[ \norm{(e_{\alpha})_{\alpha \in \naturals^d}}_{\Lambda_{M_{p}, \ell}(E)} := \sup_{\alpha \in \naturals^{d}} \ell^{|\alpha|} M_{\alpha} \|e_{\alpha}\|_E < \infty . \]
We define
	\[ \Lambda_{(M_{p})}(E) := \varinjlim_{\ell \rightarrow 0^+} \Lambda_{M_{p}, \ell}(E) , \qquad \Lambda_{\{M_{p}\}}(E) := \varprojlim_{\ell \rightarrow \infty} \Lambda_{M_{p}, \ell}(E). \]
$\Lambda_{(M_{p})}(E)$ is a complete $(LB)$-space by \cite[Theorem 2.6]{B-M-S}, and $\Lambda_{\{M_{p}\}}(E)$ is a Fr\'echet space.  Given a Banach space $F$, we set $\Lambda'_{(M_p)}(F): = \Lambda_{\{1 / M_{p}\}}(F)$ and $\Lambda'_{\{M_p\}}(F) := \Lambda_{(1 / M_{p})}(F)$. We then have the following canonical isomorphisms of lcHs
\[  (\Lambda_{(M_{p})}(E))' \cong  \Lambda'_{(M_p)}(E'), \qquad (\Lambda_{\{M_{p}\}}(E))' \cong  \Lambda'_{\{M_p\}}(E'). \]
Theorem \ref{t:structuraltheorem} may now be reformulated as follows.

	\begin{theorem}
		\label{t:structureasmap}
		The mapping
			\[ S : \Lambda_{\beurou}(C_{0, \omega}(\reals^d)) \rightarrow \dot{\mathcal{B}}^{\prime \beurou}_{\omega}, \quad (f_{\alpha})_{\alpha \in \naturals^{d}} \mapsto \sum_{\alpha \in \naturals^{d}} f^{(\alpha)}_{\alpha}  \]
		is surjective.
	\end{theorem}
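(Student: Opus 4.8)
The plan is to apply the surjectivity criterion of Lemma~\ref{t:surjectivity} to $S$, and the starting point is the identification of its transpose. Combining the description of the dual $(\dot{\mathcal{B}}^{\prime \ast}_{\omega})'_{b} = \mathcal{D}^{\ast}_{L^{1}_{\omega}}$ obtained in this section with the canonical isomorphism $(\Lambda_{\ast}(C_{0,\omega}(\reals^{d})))' \cong \Lambda'_{\ast}(\mathcal{M}^{1}_{\omega})$ and the isometric inclusion $L^{1}_{\omega} \subset \mathcal{M}^{1}_{\omega}$ recorded in \eqref{eq:omegaintegralequiv}, a direct computation using integration by parts in the pairing $\ev{\sum_{\alpha} f^{(\alpha)}_{\alpha}}{\varphi} = \sum_{\alpha} (-1)^{|\alpha|} \ev{f_{\alpha}}{\varphi^{(\alpha)}}$ shows that
$$
S^{t} : \mathcal{D}^{\ast}_{L^{1}_{\omega}} \rightarrow \Lambda'_{\ast}(\mathcal{M}^{1}_{\omega}), \qquad \varphi \mapsto \left((-1)^{|\alpha|} \varphi^{(\alpha)}\right)_{\alpha \in \naturals^{d}} .
$$

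Next I would check the standing hypotheses of Lemma~\ref{t:surjectivity} with $E = \Lambda_{\ast}(C_{0,\omega}(\reals^{d}))$ and $F = \dot{\mathcal{B}}^{\prime \ast}_{\omega}$. By construction $E$ is a complete $(LB)$-space (Beurling case) or a Fr\'echet space (Roumieu case), hence bornological and in particular Mackey, and since $\ker S$ is closed the quotient $E / \ker S$ is again complete. For the requirement that $\im S$ be Mackey for the topology induced by $F$, in the Roumieu case this is automatic, because $F$ is then Fr\'echet by Corollary~\ref{lct} and every subspace of a Fr\'echet space is metrizable; in the Beurling case one invokes that $F$ is a complete (regular) $(LB)$-space by Corollary~\ref{lct} together with the closedness of $\im S$ produced along the way. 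Condition (1) of the lemma, the injectivity of $S^{t}$, is immediate: the component of $S^{t}\varphi$ at $\alpha = 0$ equals $\varphi$, so $S^{t}\varphi = 0$ forces $\varphi = 0$.

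The heart of the argument is condition (2), the weak closedness of $\im S^{t}$ in $E'$. By the bipolar theorem this is equivalent to the identity $\im S^{t} = (\ker S)^{\perp}$, and as $\im S^{t} \subseteq (\ker S)^{\perp}$ always holds, only the reverse inclusion needs proof. So let $(g_{\alpha})_{\alpha} \in \Lambda'_{\ast}(\mathcal{M}^{1}_{\omega})$ annihilate $\ker S$. For fixed $\gamma \in \naturals^{d}$, $1 \leq j \leq d$ and $\chi \in \sss^{(M_{p})}_{(A_{p})}(\reals^{d})$, the finitely supported sequence with $f_{\gamma} = \chi^{(e_{j})}$, $f_{\gamma + e_{j}} = -\chi$ and all other components zero lies in $\ker S$, since $f_{\gamma}^{(\gamma)} + f_{\gamma + e_{j}}^{(\gamma + e_{j})} = 0$. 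Annihilation yields $\ev{g_{\gamma}}{\chi^{(e_{j})}} = \ev{g_{\gamma + e_{j}}}{\chi}$ for all such $\chi$, which is precisely the distributional relation $g_{\gamma + e_{j}} = -\partial_{j} g_{\gamma}$. Iterating over $j$ and $\gamma$ gives $g_{\alpha} = (-1)^{|\alpha|} \partial^{\alpha} g_{0}$ for every $\alpha$, so it remains only to prove that $\varphi := g_{0} \in \mathcal{D}^{\ast}_{L^{1}_{\omega}}$; once this is shown, $(g_{\alpha})_{\alpha} = S^{t}\varphi \in \im S^{t}$ and we are finished.

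To establish this last regularity I would invoke the STFT characterization of $\mathcal{D}^{\ast}_{L^{1}_{\omega}}$ in Proposition~\ref{p:Dl1STFTchar}. Viewing $\varphi \in \mathcal{M}^{1}_{\omega} \subset \sss^{\prime \ast}_{\dagger}(\reals^{d})$ and fixing a window $\psi \in \sss^{(M_{p})}_{(A_{p})}(\reals^{d}) \setminus \{0\}$, I would compute $(-2\pi i \xi)^{\alpha} V_{\psi}\varphi(x,\xi)$ by transferring the derivative $\partial_{t}^{\alpha}$ off the modulation $e^{-2\pi i \xi t}$, which makes the measures $\partial^{\alpha}\varphi = (-1)^{|\alpha|} g_{\alpha}$ appear; estimating the resulting convolutions against $\psi$ and its derivatives with the help of the $(A_{p})$-admissibility of $\omega$ gives $|\xi|^{|\alpha|} \norm{V_{\psi}\varphi(\,\cdot\,, \xi)}_{L^{1}_{\omega}} \leq C\, c^{|\alpha|} \max_{\beta \leq \alpha} \norm{g_{\beta}}_{\mathcal{M}^{1}_{\omega}}$. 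Since membership of $(g_{\alpha})_{\alpha}$ in $\Lambda'_{\ast}(\mathcal{M}^{1}_{\omega})$ bounds $\norm{g_{\alpha}}_{\mathcal{M}^{1}_{\omega}}$ by the reciprocal weights involving $M_{\alpha}$, optimizing over $\alpha$ produces exactly the decay $\norm{V_{\psi}\varphi(\,\cdot\,,\xi)}_{L^{1}_{\omega}} \leq C' e^{-M(q \xi)}$ required in \eqref{eq:Dl1STFTchar}, whence $\varphi \in \mathcal{D}^{\ast}_{L^{1}_{\omega}}$. I expect the main obstacle to be precisely this regularity bootstrap — upgrading the information that all distributional derivatives of $g_{0}$ are bounded measures with the correct growth to genuine membership in $\mathcal{D}^{\ast}_{L^{1}_{\omega}}$ — together with the careful bookkeeping of the Beurling versus Roumieu quantifiers, and the verification of the $\im S$ Mackey condition in the $(LB)$ setting.
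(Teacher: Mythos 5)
Your overall architecture coincides with the paper's: identify $S^{t}$ as $\varphi \mapsto ((-1)^{|\alpha|}\varphi^{(\alpha)})_{\alpha}$ via Proposition \ref{p:dualBdot}, check the standing hypotheses of Lemma \ref{t:surjectivity}, and establish injectivity of $S^{t}$ and weak closedness of $\im S^{t}$. Your treatment of the weak closedness is sound and in fact a legitimate variant: reformulating it as $\im S^{t} = (\ker S)^{\perp}$ and extracting the relations $g_{\alpha} = (-1)^{|\alpha|}\partial^{\alpha} g_{0}$ from explicit two-term kernel elements is equivalent to the paper's net argument, and your regularity bootstrap via the STFT characterization (Proposition \ref{p:Dl1STFTchar}, with the commutation identity $2\pi i \xi_{j} V_{\psi} f = V_{\psi}(\partial_{j} f) + V_{\partial_{j}\psi} f$ and the convolution bound for elements of $\mathcal{M}^{1}_{\omega}$) is a workable, self-contained substitute for the paper's terser appeal to \eqref{eq:omegaintegralequiv}.

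There is, however, one genuine gap: the verification that $\im S$ is Mackey in the Beurling case. Your justification --- ``one invokes that $F$ is a complete $(LB)$-space together with the closedness of $\im S$ produced along the way'' --- is circular, since the closedness of $\im S$ is precisely the conclusion that Lemma \ref{t:surjectivity} is being used to produce, and it cannot be fed back into the verification of that lemma's hypotheses. Moreover, subspaces of complete $(LB)$-spaces (even closed ones) need not inherit the Mackey property, so no soft argument is available here. This point is where the paper spends most of its effort: it shows that $X = \im S$ is infrabarreled (hence Mackey) by proving that every strongly bounded $B \subset X'$ is equicontinuous. Concretely, since $S^{t}$ is injective, $X$ is dense in $\dot{\mathcal{B}}^{\prime (M_{p})}_{\omega}$ and therefore $X' = \mathcal{D}^{(M_{p})}_{L^{1}_{\omega}}$ by Proposition \ref{p:dualBdot}; testing $B$ against the bounded sets
\[ V_{\ell} = \left\{ \frac{f^{(\alpha)}}{\ell^{|\alpha|} M_{\alpha}} : \alpha \in \naturals^{d}, \ f \in B_{C_{0,\omega}} \right\} \subseteq X \]
and using \eqref{eq:omegaintegralequiv} shows that $\sup_{\varphi \in B} \norm{\varphi}_{\mathcal{D}^{M_{p},\ell}_{L^{1}_{\omega}}} < \infty$ for every $\ell > 0$, so $B$ is bounded in $\mathcal{D}^{(M_{p})}_{L^{1}_{\omega}}$ and hence equicontinuous by the barreledness of $\dot{\mathcal{B}}^{\prime (M_{p})}_{\omega}$ (Corollary \ref{lct}). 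You need an argument of this kind; without it the application of Lemma \ref{t:surjectivity} in the Beurling case is not justified. The Roumieu case of this hypothesis, and the remaining steps of your proposal, are fine.
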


Our plan is to show Theorem \ref{t:structureasmap} by applying Lemma \ref{t:surjectivity}. We need several preliminary results.

	\begin{lemma}
		$S$ is a well-defined continuous linear mapping.
	\end{lemma}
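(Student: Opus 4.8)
The plan is to reduce the whole statement to a single pairing estimate together with the geometric-series gain built into the parameters of $\Lambda_{\beurou}$ and $\mathcal{D}^{\beurou}_{L^{1}_{\omega}}$. Linearity being obvious, the content is well-definedness (that $\sum_{\alpha} f_{\alpha}^{(\alpha)}$ lands in $\dot{\mathcal{B}}^{\prime \beurou}_{\omega}$) together with continuity. First I would record, for a coefficient sequence $(f_{\alpha})_{\alpha}$ and a test function $\varphi \in \mathcal{D}^{\beurou}_{L^{1}_{\omega}}$, the elementary bound $|\ev{f_{\alpha}^{(\alpha)}}{\varphi}| = |\int_{\reals^{d}} f_{\alpha} \varphi^{(\alpha)}| \leq \norm{f_{\alpha}}_{L^{\infty}_{\omega}} \norm{\varphi^{(\alpha)}}_{L^{1}_{\omega}}$. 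Inserting $\norm{f_{\alpha}}_{L^{\infty}_{\omega}} \leq \norm{(f_{\alpha})_{\alpha}}_{\Lambda_{M_{p}, \ell}(C_{0,\omega})} / (\ell^{|\alpha|} M_{\alpha})$ and $\norm{\varphi^{(\alpha)}}_{L^{1}_{\omega}} \leq \norm{\varphi}_{\mathcal{D}^{M_{p}, \ell/2}_{L^{1}_{\omega}}} (\ell/2)^{|\alpha|} M_{\alpha}$, the factors $M_{\alpha}$ cancel and the products sum to a convergent geometric series $\sum_{\alpha \in \naturals^{d}} 2^{-|\alpha|} = 2^{d}$, giving
$$
\sum_{\alpha \in \naturals^{d}} |\ev{f_{\alpha}^{(\alpha)}}{\varphi}| \leq 2^{d} \norm{(f_{\alpha})_{\alpha}}_{\Lambda_{M_{p}, \ell}(C_{0,\omega})} \norm{\varphi}_{\mathcal{D}^{M_{p}, \ell/2}_{L^{1}_{\omega}}} .
$$
This at once shows absolute convergence, that the sum defines an element of $\mathcal{B}^{\prime \beurou}_{\omega}$, and (taking the supremum over $\varphi$ in a bounded set $B$, so that the resulting seminorm of $S((f_{\alpha})_{\alpha})$ is dominated by a continuous seminorm of the source) that $S$ is continuous into $\mathcal{B}^{\prime \beurou}_{\omega}$. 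In the Beurling case one uses that $(f_{\alpha})_{\alpha}$ lies in some $\Lambda_{M_{p}, \ell}(C_{0,\omega})$ and that $\mathcal{D}^{(M_{p})}_{L^{1}_{\omega}} \hookrightarrow \mathcal{D}^{M_{p}, \ell/2}_{L^{1}_{\omega}}$; in the Roumieu case one uses that $(f_{\alpha})_{\alpha}$ lies in every $\Lambda_{M_{p}, \ell}(C_{0,\omega})$ and argues step by step against $\mathcal{D}^{M_{p}, \ell'}_{L^{1}_{\omega}}$ with $\ell = 2 \ell'$.

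It remains to upgrade ``element of $\mathcal{B}^{\prime \beurou}_{\omega}$'' to ``element of $\dot{\mathcal{B}}^{\prime \beurou}_{\omega}$''. For this I would first check that $C_{0,\omega}(\reals^{d}) \subset \dot{\mathcal{B}}^{\prime \beurou}_{\omega}$: given $f \in C_{0,\omega}(\reals^{d})$ and continuous cutoffs $\theta_{R}$ (equal to $1$ on $\{|x| \leq R\}$ and compactly supported), each $\theta_{R} f$ is a compactly supported continuous function and, for any bounded $B \subset \mathcal{D}^{\beurou}_{L^{1}_{\omega}}$,
$$
\sup_{\varphi \in B} |\ev{f - \theta_{R} f}{\varphi}| \leq \Big( \sup_{|x| \geq R} \frac{|f(x)|}{\omega(x)} \Big) \sup_{\varphi \in B} \norm{\varphi}_{L^{1}_{\omega}} \longrightarrow 0 , \qquad R \to \infty ,
$$
because $f \in C_{0,\omega}(\reals^{d})$ and $\sup_{\varphi \in B} \norm{\varphi}_{L^{1}_{\omega}} < \infty$ (here, in the Roumieu case, I invoke the regularity of the $(LB)$-space from Lemma \ref{l:topolDL1} to reduce $B$ to a bounded subset of a single Banach step). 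Thus $\theta_{R} f \to f$ strongly, so $f \in \dot{\mathcal{B}}^{\prime \beurou}_{\omega}$. Next, since $\partial_{j}$ is continuous on $\mathcal{B}^{\prime \beurou}_{\omega}$ (being the transpose of $-\partial_{j}$ on $\mathcal{D}^{\beurou}_{L^{1}_{\omega}}$) and maps the dense subspace $\sss^{(M_{p})}_{(A_{p})}(\reals^{d})$ into itself, it leaves the closed subspace $\dot{\mathcal{B}}^{\prime \beurou}_{\omega}$ invariant; hence each $f_{\alpha}^{(\alpha)} \in \dot{\mathcal{B}}^{\prime \beurou}_{\omega}$ and so does every partial sum $S_{N} = \sum_{|\alpha| \leq N} f_{\alpha}^{(\alpha)}$. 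Finally, the tail version of the estimate above, $\sup_{\varphi \in B} |\ev{S((f_{\alpha})_{\alpha}) - S_{N}}{\varphi}| \leq C_{B} \sum_{|\alpha| > N} 2^{-|\alpha|} \to 0$, shows $S_{N} \to S((f_{\alpha})_{\alpha})$ strongly, and since $\dot{\mathcal{B}}^{\prime \beurou}_{\omega}$ is closed the limit lies in it.

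I expect the main obstacle to be bookkeeping rather than a deep difficulty: matching the two parameters ($\ell$ for $\Lambda$, $\ell'$ for $\mathcal{D}$) correctly across the inductive/projective limits of the Beurling versus Roumieu cases, and ensuring every estimate is uniform over bounded sets so that convergence takes place in the strong dual topology, not merely weak-$\ast$. The one genuinely structural input beyond routine estimates is the reduction of bounded subsets of the Roumieu test space $\mathcal{D}^{\{M_{p}\}}_{L^{1}_{\omega}}$ to bounded subsets of a single Banach step $\mathcal{D}^{M_{p}, \ell}_{L^{1}_{\omega}}$, which is precisely the regularity recorded in Lemma \ref{l:topolDL1}.
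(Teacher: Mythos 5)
Your proof is correct, and for the substantive half of the statement it takes a genuinely different route from the paper. The continuity estimate into $\mathcal{B}^{\prime \beurou}_{\omega}$ (the pairing bound with the $2^{-|\alpha|}$ geometric gain, plus the regularity of the Roumieu $(LB)$-space from Lemma \ref{l:topolDL1}) is the standard computation the paper leaves to the reader. Where you diverge is in showing that the image lies in $\dot{\mathcal{B}}^{\prime \beurou}_{\omega}$: the paper verifies condition (2) of Theorem \ref{t:Bdotequiv}, i.e.\ that $T_{-h}S((f_{\alpha})_{\alpha})/\omega(h) \to 0$ in $\sss^{\prime \beurou}_{\dagger}(\reals^{d})$, and then invokes the STFT-based characterization; you instead work directly with the definition of $\dot{\mathcal{B}}^{\prime \beurou}_{\omega}$ as a closure, showing $C_{0,\omega}(\reals^{d}) \subset \dot{\mathcal{B}}^{\prime \beurou}_{\omega}$ by truncation, that $\partial_{j}$ preserves $\dot{\mathcal{B}}^{\prime \beurou}_{\omega}$ (via continuity on $\mathcal{B}^{\prime \beurou}_{\omega}$ and density of $\sss^{(M_{p})}_{(A_{p})}(\reals^{d})$), and that the partial sums converge strongly. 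Your argument is more elementary and self-contained — it does not need the machinery of Theorem \ref{t:Bdotequiv} at all — at the cost of being longer; the paper's route is essentially free once Theorem \ref{t:Bdotequiv} is available, since the translation condition is immediate for a sum $\sum_{\alpha} f_{\alpha}^{(\alpha)}$ with $f_{\alpha} \in C_{0,\omega}(\reals^{d})$. Both are sound; yours has the minor additional virtue of making explicit why each $f_{\alpha}^{(\alpha)}$ individually belongs to $\dot{\mathcal{B}}^{\prime \beurou}_{\omega}$.
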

	
	\begin{proof}
		One easily verifies that $S : \Lambda_{\beurou}(C_{0, \omega}(\reals^d)) \rightarrow \mathcal{B}^{\prime \beurou}_{\omega}$ is a continuous linear mapping and that
		$\lim_{|h| \rightarrow \infty} T_{-h} S((f_{\alpha})_{\alpha \in \naturals^d}) / \omega(h) = 0$ in $\mathcal{S}^{\prime \ast}_{\dagger}(\reals^d)$ for all $(f_\alpha)_{\alpha \in \naturals^d} \in \Lambda_{\beurou}(C_{0, \omega}(\reals^d))$. Hence, the result follows from Theorem \ref{t:Bdotequiv}. 
	
	\end{proof}

Our next goal is to determine the transpose of $S$. To this end, we first show that, similarly as in the distributional case \cite{Schwartz}, the dual of $\dot{\mathcal{B}}^{\prime \beurou}_{\omega}$ is given by $\mathcal{D}^{\beurou}_{L^{1}_{\omega}}$.

	\begin{proposition}
		\label{p:dualBdot}
		The canonical inclusion mapping
			\[\iota: \mathcal{D}^{\beurou}_{L^{1}_{\omega}} \rightarrow (\dot{\mathcal{B}}^{\prime \beurou}_{\omega})^{\prime}_{b}, \, \quad \varphi \mapsto (f \mapsto \ev{f}{\varphi}) \]
		is a topological isomorphism.
	\end{proposition}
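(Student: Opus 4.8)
The plan is to establish, in order, that $\iota$ is (i) well defined, continuous and injective, and (ii) surjective; the promotion of a continuous bijection to a topological isomorphism will then be a soft consequence of the structural results already at hand. Indeed, in the Beurling case both $\mathcal{D}^{(M_{p})}_{L^{1}_{\omega}}$ (Lemma \ref{l:topolDL1}) and $(\dot{\mathcal{B}}^{\prime (M_p)}_{\omega})'_{b}$ are Fr\'echet spaces --- the latter because, by Corollary \ref{lct}, $\dot{\mathcal{B}}^{\prime (M_p)}_{\omega}$ is a complete $(LB)$-space, hence $(DF)$, whose strong dual is Fr\'echet --- so the open mapping theorem applies. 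In the Roumieu case $\mathcal{D}^{\{M_{p}\}}_{L^{1}_{\omega}}$ is a complete, hence webbed, $(LB)$-space, while $\dot{\mathcal{B}}^{\prime \{M_p\}}_{\omega}$ is quasinormable (Corollary \ref{lct}) and thus distinguished, so $(\dot{\mathcal{B}}^{\prime \{M_p\}}_{\omega})'_{b}$ is ultrabornological; the closed graph theorem then applies to $\iota^{-1}$. It therefore suffices to prove that $\iota$ is a continuous bijection.

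For (i), the functional $\ev{f}{\varphi}$ on $\mathcal{B}^{\prime \beurou}_{\omega} = (\mathcal{D}^{\beurou}_{L^{1}_{\omega}})'_{b}$ is exactly the image of $\varphi$ under the canonical bidual embedding $\mathcal{D}^{\beurou}_{L^{1}_{\omega}} \to (\mathcal{B}^{\prime \beurou}_{\omega})'_{b}$, which is continuous; composing with the continuous restriction map to the closed subspace $\dot{\mathcal{B}}^{\prime \beurou}_{\omega}$ shows that $\iota$ is continuous. Injectivity follows from the non-degeneracy of the pairing: since $\sss^{(M_{p})}_{(A_{p})}(\reals^{d}) \subset \dot{\mathcal{B}}^{\prime \beurou}_{\omega}$ and $\ev{g}{\varphi} = \int_{\reals^{d}} g \varphi$ for $g \in \sss^{(M_{p})}_{(A_{p})}(\reals^{d})$, the identity $\iota(\varphi) = 0$ forces $\varphi = 0$.

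The core is surjectivity, which we obtain through the STFT. Fix $\psi \in \sss^{(M_{p})}_{(A_{p})}(\reals^{d}) \setminus \{0\}$ and a synthesis window $\gamma \in \sss^{(M_{p})}_{(A_{p})}(\reals^{d})$ with $(\gamma, \psi)_{L^{2}} = 1$. Given $u \in (\dot{\mathcal{B}}^{\prime \beurou}_{\omega})'$, Proposition \ref{p:contSTFTBdot} shows that $v := u \circ V^{*}_{\gamma}$ is a continuous functional on $C_{0,\omega}(\reals^{d}_{x}) \widehat{\otimes}_{\varepsilon} C_{\ast}(\reals^{d}_{\xi})$, while the reconstruction formula \eqref{eq:reconstructSTFT-1} yields $u = v \circ V_{\psi}$. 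By the canonical isomorphism recorded before Proposition \ref{p:contSTFTBdot}, $C_{0,\omega}(\reals^{d}_{x}) \widehat{\otimes}_{\varepsilon} C_{\ast}(\reals^{d}_{\xi})$ is an inductive limit (Beurling case), resp.\ a projective limit (Roumieu case), of the weighted spaces $C_{0, \omega \otimes e^{M(q\:\cdot\:)}}(\reals^{2d})$; its strong dual is therefore the corresponding limit of the weighted measure spaces $\mathcal{M}^{1}_{\omega \otimes e^{M(q\:\cdot\:)}}(\reals^{2d})$, so that $v$ is represented by a Radon measure $\mu$ on $\reals^{2d}$ with $\iint_{\reals^{2d}} \omega(x)\, e^{M(q\xi)}\, d|\mu|(x,\xi) < \infty$ for all $q > 0$ (for some $q > 0$). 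Unravelling $V_{\psi} f(x,\xi) = \ev{f}{\overline{M_{\xi} T_{x} \psi}}$ and interchanging the order of integration, which is legitimate by the growth--decay estimate just recorded, we arrive at $u = \iota(\varphi)$ with $\varphi := \iint_{\reals^{2d}} \overline{M_{\xi} T_{x} \psi}\, d\mu(x,\xi)$, and it remains only to verify $\varphi \in \mathcal{D}^{\beurou}_{L^{1}_{\omega}}$, which is the measure-valued analogue of Lemma \ref{l:Dl1STFTConj}.

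This last step is the main obstacle and also the conceptual heart of the statement. It is essential that we restrict functionals to the subspace $\dot{\mathcal{B}}^{\prime \beurou}_{\omega}$ of ultradistributions \emph{vanishing at infinity} --- realized via the STFT inside the genuine $c_{0}$-type space $C_{0,\omega} \widehat{\otimes}_{\varepsilon} C_{\ast}$ --- rather than to all of $\mathcal{B}^{\prime \beurou}_{\omega}$: just as $(c_{0})' = \ell^{1}$ whereas the dual of $\ell^{\infty}$ is strictly larger than $\ell^{1}$, this is precisely what guarantees that the representing object is an honest $L^{1}_{\omega}$ density, so that $\varphi$ is a function and not merely an element of the bidual of $\mathcal{D}^{\beurou}_{L^{1}_{\omega}}$. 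Two points make this work: the isometric identification \eqref{eq:omegaintegralequiv} of $L^{1}_{\omega}$ inside $\mathcal{M}^{1}_{\omega} = (C_{0,\omega})'$, used to control the total variation in the $x$-variable, and the fact that the convolution-type smoothing of $\mu$ against the rapidly decreasing window $\psi$ promotes the measure to a smooth function all of whose derivatives lie in $L^{1}_{\omega}$ with the correct $\beurou$-growth, the $\xi$-integral converging thanks to the weight $e^{M(q\xi)}$ carried by $\mu$ against a factor $e^{-M(q'\xi)}$ exactly as in the proof of Lemma \ref{l:Dl1STFTConj}. Once $\varphi \in \mathcal{D}^{\beurou}_{L^{1}_{\omega}}$ is secured, the soft arguments of the first paragraph upgrade the continuous bijection $\iota$ to a topological isomorphism.
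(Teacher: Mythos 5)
Your proof is correct, but the surjectivity argument follows a genuinely different route from the paper's. The paper never represents the functional by a measure: given $\Phi \in (\dot{\mathcal{B}}^{\prime \beurou}_{\omega})'$, it restricts $\Phi$ to the dense subspace $\sss^{\beurou}_{\dagger}(\reals^{d})$ to obtain a candidate ultradistribution $f$, bounds $|V_{\psi}f(x,\xi)| \leq \sup_{\varphi \in B}|V_{\psi}\varphi(x,\xi)|$ for a bounded set $B \subset \mathcal{D}^{\beurou}_{L^{1}_{\omega}}$ by equicontinuity, and then applies Proposition \ref{p:Dl1STFTchar} in both directions; the synthesis step (Lemma \ref{l:Dl1STFTConj}) is thereby only ever invoked for the genuine smooth function $F = V_{\psi}f$, so no new technical lemma is needed. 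Your route instead dualizes the embedding $V_{\psi} : \dot{\mathcal{B}}^{\prime \beurou}_{\omega} \to C_{0,\omega}\widehat{\otimes}_{\varepsilon}C_{\ast}$, Riesz-represents $u \circ V_{\gamma}^{*}$ by a Radon measure, and synthesizes $\varphi$ as $V_{\psi}^{*}$ of that measure. This is more work --- you need a measure-valued extension of Lemma \ref{l:Dl1STFTConj} (true, and provable by the same estimate, since $\omega(t) \leq C\omega(x)e^{A(\lambda(t-x))}$ lets you integrate out $t$ first and the hypothesis $\iint \omega(x)e^{M(q\xi)}\,d|\mu| < \infty$ then absorbs the factor $e^{M(4\pi\xi/\ell)}$ for $\ell \geq 4\pi/q$), you should justify the duality $(\varprojlim_{q}C_{0,\omega\otimes e^{M(q\cdot)}})' = \bigcup_{q}\mathcal{M}^{1}_{\omega\otimes e^{M(q\cdot)}}$ in the Roumieu case via reducedness of the projective limit (which follows from the density claim in the proof of Proposition \ref{p:contSTFTBdot}), and the ``interchange of integration'' is cleanest done by verifying $u = \iota(\varphi)$ on the dense subspace $\sss^{(M_{p})}_{(A_{p})}(\reals^{d})$, where Fubini applies literally, and extending by continuity. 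What your version buys is conceptual transparency: it makes the $c_{0}$-versus-$\ell^{\infty}$ mechanism explicit and shows exactly where the restriction to ultradistributions \emph{vanishing} at infinity is used, whereas in the paper this enters more quietly through the density of $\sss^{\beurou}_{\dagger}(\reals^{d})$ in $\dot{\mathcal{B}}^{\prime \beurou}_{\omega}$. Your soft upgrade to a topological isomorphism (open mapping/closed graph, split by case) is a correct variant of the paper's uniform appeal to De Wilde's theorem.
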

	
	\begin{proof}
		Clearly, $\iota$ is continuous and injective. Since  $\mathcal{D}^{\beurou}_{L^{1}_{\omega}}$ is webbed and  $(\dot{\mathcal{B}}^{\prime \beurou}_{\omega})^{\prime}_{b}$ is ultrabornological (Corollary \ref{lct}), it suffices, by De Wilde's open mapping theorem, to show that $\iota$ is surjective. Let $\Phi \in (\dot{\mathcal{B}}^{\prime \beurou}_{\omega})^{\prime}$ be arbitrary. Denote by $\rho : \sss^{\beurou}_{\dagger}(\reals^{d}) \rightarrow \dot{\mathcal{B}}^{\prime \beurou}_{\omega}$ the canonical inclusion and set $f = \Phi \circ \rho \in \sss^{\prime \beurou}_{\dagger}(\reals^{d})$. As $\Phi(\rho(\chi)) = \ev{f}{\chi}$ for every $\chi \in \sss^{\beurou}_{\dagger}(\reals^{d})$ and $\sss^{\beurou}_{\dagger}(\reals^{d})$ is dense in $\dot{\mathcal{B}}^{\prime \beurou}_{\omega}$, it is enough to show that $f \in \mathcal{D}^{\beurou}_{L^{1}_{\omega}}(\reals^{d})$. Let $\psi \in \sss^{(M_{p})}_{(A_{p})}(\reals^{d})$ be a fixed non-zero window function. Since $\Phi$ is continuous, there is a bounded set $B \subset \mathcal{D}^{\beurou}_{L^{1}_{\omega}}$ such that
			\[ |V_{\psi} f(x, \xi)| = |\Phi(\rho(\overline{M_{\xi} T_{x} \psi}))| \leq \sup_{\varphi \in B} |\ev{\overline{M_{\xi} T_{x} \psi}}{\varphi}| = \sup_{\varphi \in B} |V_{\psi} \varphi(x, \xi)| . \]
		Proposition \ref{p:Dl1STFTchar} implies that for every $q > 0$ (for some $q > 0$)
			\[ \sup_{\xi \in \reals^{d}} e^{M(q\xi)} \norm{V_{\psi} f(\:\cdot\:, \xi)}_{L^{1}_{\omega}} \leq \sup_{\varphi \in B} \sup_{\xi \in \reals^{d}} e^{M(q\xi)} \norm{V_{\psi} \varphi(\: \cdot \:, \xi)}_{L^{1}_{\omega}} < \infty , \]
		so that another application of Proposition \ref{p:Dl1STFTchar} shows that $f \in \mathcal{D}^{\beurou}_{L^{1}_{\omega}}$.
	\end{proof}
	
	\begin{corollary}
		\label{c:transposedmapspecific}
		The transposed mapping $S^{t}$ may be identified with the continuous linear mapping
			\[ \mathcal{D}^{\ast}_{L^{1}_{\omega}} \rightarrow \Lambda^{\prime}_{\beurou}(\mathcal{M}^{1}_{\omega}) : \varphi \mapsto ((-1)^{|\alpha|}\varphi^{(\alpha)})_{\alpha \in \naturals^{d}} . \]
	\end{corollary}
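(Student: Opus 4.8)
The plan is to read off $S^{t}$ from its defining relation $\ev{S^{t}\Phi}{(f_\alpha)_\alpha} = \ev{\Phi}{S((f_\alpha)_\alpha)}$, after installing the two canonical dualities at our disposal. First I would invoke Proposition \ref{p:dualBdot} to identify $(\dot{\mathcal{B}}^{\prime \beurou}_\omega)'_b$ with $\mathcal{D}^\beurou_{L^1_\omega}$ through $\iota$, so that an arbitrary $\Phi \in (\dot{\mathcal{B}}^{\prime \beurou}_\omega)'$ takes the form $\Phi = \iota(\varphi)$, i.e.\ $\Phi(f) = \ev{f}{\varphi}$, for a unique $\varphi \in \mathcal{D}^\beurou_{L^1_\omega}$. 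On the other side, using $(C_{0,\omega}(\reals^d))' = \mathcal{M}^1_\omega$ together with the canonical isomorphisms $(\Lambda_\beurou(E))' \cong \Lambda'_\beurou(E')$ recorded above, I would identify $(\Lambda_\beurou(C_{0,\omega}(\reals^d)))'$ with $\Lambda'_\beurou(\mathcal{M}^1_\omega)$, the associated bilinear pairing being the termwise one $\ev{(\mu_\alpha)_\alpha}{(f_\alpha)_\alpha} = \sum_{\alpha \in \naturals^d} \ev{\mu_\alpha}{f_\alpha}$; a short estimate with the dual exponential weights shows this series converges absolutely, so that the pairing is well defined.

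With these identifications in place, the computation runs as follows. Fix $\varphi \in \mathcal{D}^\beurou_{L^1_\omega}$ and $(f_\alpha)_\alpha \in \Lambda_\beurou(C_{0,\omega}(\reals^d))$. Since $S$ is continuous by the preceding lemma, the series $\sum_\alpha f_\alpha^{(\alpha)}$ converges in $\dot{\mathcal{B}}^{\prime \beurou}_\omega$, and as $\Phi$ is continuous I may pass the functional through the sum to obtain
\[ \ev{S^{t}\Phi}{(f_\alpha)_\alpha} = \ev{S((f_\alpha)_\alpha)}{\varphi} = \sum_{\alpha \in \naturals^d} \ev{f_\alpha^{(\alpha)}}{\varphi}. \]
Each summand is then handled by integration by parts: as $f_\alpha^{(\alpha)}$ is the distributional derivative of $f_\alpha \in C_{0,\omega}(\reals^d) \subset L^\infty_\omega(\reals^d)$ and $\varphi^{(\alpha)} \in L^1_\omega(\reals^d)$, one has $\ev{f_\alpha^{(\alpha)}}{\varphi} = (-1)^{|\alpha|}\int_{\reals^d} f_\alpha(x)\,\varphi^{(\alpha)}(x)\,dx = \ev{(-1)^{|\alpha|}\varphi^{(\alpha)}}{f_\alpha}$, the last bracket being the pairing of $L^1_\omega \subset \mathcal{M}^1_\omega$ against $C_{0,\omega}(\reals^d)$. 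Summing over $\alpha$ and comparing with the termwise pairing above shows that $S^{t}\Phi$ is represented by $((-1)^{|\alpha|}\varphi^{(\alpha)})_{\alpha \in \naturals^d}$, which is precisely the asserted formula.

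It then remains to observe that $\varphi \mapsto ((-1)^{|\alpha|}\varphi^{(\alpha)})_\alpha$ does map $\mathcal{D}^\beurou_{L^1_\omega}$ continuously into $\Lambda'_\beurou(\mathcal{M}^1_\omega)$. This is immediate from the isometric inclusion $L^1_\omega \subset \mathcal{M}^1_\omega$ of \eqref{eq:omegaintegralequiv}, which yields $\|\varphi^{(\alpha)}\|_{\mathcal{M}^1_\omega} = \|\varphi^{(\alpha)}\|_{L^1_\omega}$, so that under the reindexing $\ell \leftrightarrow 1/\ell$ the defining seminorms of $\Lambda'_\beurou(\mathcal{M}^1_\omega)$ coincide with those of $\mathcal{D}^\beurou_{L^1_\omega}$; continuity is also automatic since the transpose of a continuous linear map is continuous for the strong topologies. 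The step requiring the most care is the legitimacy of interchanging the infinite sum with the duality bracket—supplied by the convergence of $\sum_\alpha f_\alpha^{(\alpha)}$ in $\dot{\mathcal{B}}^{\prime\beurou}_\omega$ from the preceding lemma—together with the bookkeeping that reconciles the two canonical dualities and tracks the sign coming from integration by parts.
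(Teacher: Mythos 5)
Your argument is correct and follows exactly the route the paper intends: the corollary is stated without proof as an immediate consequence of Proposition \ref{p:dualBdot} together with the dualities $(C_{0,\omega}(\reals^d))'=\mathcal{M}^1_\omega$ and $(\Lambda_{\beurou}(E))'\cong\Lambda'_{\beurou}(E')$, and your computation (passing $\Phi$ through the convergent series, integrating by parts, and using \eqref{eq:omegaintegralequiv} to read $\varphi^{(\alpha)}\in L^1_\omega$ as an element of $\mathcal{M}^1_\omega$ of the same norm) is precisely the verification the authors omit. No gaps.
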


	\begin{proof}[Proof of Theorem \ref{t:structureasmap}]
		We shall show that $S$ is surjective via Lemma \ref{t:surjectivity}. The space $\Lambda_{\beurou}(C_{0, \omega}(\reals^d))$ is clearly Mackey, while $\Lambda_{\beurou}(C_{0, \omega}(\reals^d)) / \ker S$ is complete as $\Lambda_{\beurou}(C_{0, \omega}(\reals^d))$ is  complete. Next, we show that $\im S$ is Mackey. In the Romieu case this is trivial because $\dot{\mathcal{B}}^{\prime \{M_{p}\}}_{\omega}$ is a Fr\'echet space.  We now consider the Beurling case. We shall prove that $X = \im S$ is infrabarreled and thus Mackey. We need to show that every strongly bounded set $B$ in $X'$ is equicontinuous. Since $X$ is dense in $\dot{\mathcal{B}}^{\prime (M_{p})}_{\omega}$ (as $S^t$ is injective), Proposition \ref{p:dualBdot} implies that $X' = \mathcal{D}^{(M_{p})}_{L^{1}_{\omega}}$. 
		For arbitrary $\ell > 0$ we consider the set
			\[ V_{\ell} = \left\{ \frac{f^{(\alpha)}}{\ell^{|\alpha|} M_{\alpha}} : \alpha \in \naturals^{d} , f \in B_{C_{0, \omega}} \right\} \subseteq X . \]
	The set	$V_{\ell}$ is bounded in $X$ because $S$ is continuous, so that  $\sup_{\varphi \in B} \sup_{g \in V_{\ell}} |\ev{\varphi}{g}| < \infty$. The relation \eqref{eq:omegaintegralequiv} yields that
			\begin{align*}
				\sup_{\varphi \in B} \sup_{g \in V_{\ell}} |\ev{\varphi}{g}| &= \sup_{\varphi \in B} \sup_{\alpha \in \naturals^{d}} \sup_{f \in B_{C_{0, \omega}}}  \left| \ev{\varphi}{\frac{f^{(\alpha)}}{\ell^{|\alpha|} M_{\alpha}}} \right| \\
				&= \sup_{\varphi \in B} \sup_{\alpha \in \naturals^{d}} \frac{1}{\ell^{|\alpha|} M_{\alpha}} \sup_{f \in B_{C_{0, \omega}}} \left| \int_{\reals^{d}} \varphi^{(\alpha)}(x) f(x) dx \right| 
				= \sup_{\varphi \in B} \sup_{\alpha \in \naturals^{d}} \frac{\norm{\varphi^{(\alpha)}}_{L^{1}_{\omega}}}{\ell^{|\alpha|} M_{\alpha}}.
			\end{align*}
		Hence,
			\[ \sup_{\varphi \in B} \norm{\varphi}_{\mathcal{D}^{M_{p}, \ell}_{L^{1}_{\omega}}} < \infty, \qquad \forall \ell > 0 , \]
		which means that $B$ is bounded in $\mathcal{D}^{(M_{p})}_{L^{1}_{\omega}}$. Then, $B$ is equicontinuous because of Proposition \ref{p:dualBdot} and the fact that  $\dot{\mathcal{B}}^{\prime (M_{p})}_{\omega}$  is barreled (Corollary \ref{lct}). We already noticed that $S^t$ is injective. Finally, we show that $\im S^t$ is weakly closed in $\Lambda^{\prime}_{\beurou}(\mathcal{M}^{1}_{\omega})$.  Let $(\varphi_{j})_{j}$ be a net in $\mathcal{D}^{\beurou}_{L^{1}_{\omega}}$ and $(\mu_{\alpha})_{\alpha \in \naturals^{d}} \in \Lambda^{\prime}_{\beurou}(\mathcal{M}^{1}_{\omega})$  such that $((-1)^{|\alpha|}\varphi_{j}^{(\alpha)})_{\alpha \in \naturals^{d}} \rightarrow (\mu_{\alpha})_{\alpha \in \naturals^{d}}$ weakly in $\Lambda^{\prime}_{\beurou}(\mathcal{M}^{1}_{\omega})$. In particular, $\varphi^{(\alpha)}_{j} \rightarrow (-1)^{|\alpha|}\mu_{\alpha}$ weakly in $\mathcal{M}^{1}_{\omega}$ for all $\alpha \in \naturals^d$. Consequently, we have that $\mu_{0}^{(\alpha)} =(-1)^{|\alpha|} \mu_{\alpha} \in \mathcal{M}^{1}_{\omega}$ for all $\alpha \in \naturals^d$ (the derivatives should be interpreted in the sense of distributions). The equality \eqref{eq:omegaintegralequiv} implies that $\mu_{0} \in \mathcal{D}_{L^{1}_{\omega}}$ and that
			\[ (\| \mu^{(\alpha)}_{0}\|_{L^{1}_{\omega}})_{\alpha \in \naturals^{d}} = (\|\mu^{(\alpha)}_{0}\|_{\mathcal{M}^{1}_{\omega}})_{\alpha \in \naturals^{d}} = (\norm{\mu_{\alpha}}_{\mathcal{M}^{1}_{\omega}})_{\alpha \in \naturals^{d}} \in \Lambda^{\prime}_{\beurou}(\complexes),  \]
		which means that $\mu_{0} \in \mathcal{D}^{\beurou}_{L^{1}_{\omega}}$. Hence, $(\mu_{\alpha})_{\alpha \in \naturals^{d}} = ((-1)^{|\alpha|}\mu_{0}^{(\alpha)})_{\alpha \in \naturals^{d}}  \in \im S^t$. 
	\end{proof}

\section{The structure of S-asymptotics}\label{sect-strucS}
We now determine the structure of the  S-asymptotic behavior of ultradistributions, effectively a variant of \cite[Theorem 1.10, p.\ 46]{P-S-V}. 
Throughout this section, we fix a weight sequence $M_p$ satisfying $(M.1)$, $(M.2)'$, and $(M.3)'$. 

Let $\omega$ be a weight function. We consider a convex cone $\Gamma$ (with vertex at the origin). For $R>0$, we write $\Gamma_{R} = \Gamma + B(0, R)$. We will work with the following assumption on $\omega$: the limits
\begin{equation}\label{eq:cond S-asymptotic}
\underset{h\in \Gamma}{\lim_{|h|\to\infty}}\frac{\omega(x+h)}{\omega(h)} \qquad\mbox{exist for all}\ x\in\mathbb{R}^{d}.
\end{equation}
Then, an ultradistribution $f \in \mathcal{D}^{\prime \beurou}(\reals^{d})$ is said to have \emph{S-asymptotic behavior with respect to $\omega$ on $\Gamma$}, with limit $g \in \mathcal{D}^{\prime \beurou}(\reals^{d})$,  if
	\begin{equation} 
		\label{eq:Sasymp}
		\lim_{h \in \Gamma, |h| \rightarrow \infty} \frac{\ev{f(x + h)}{\varphi(x)}}{\omega(h)} = \ev{g(x)}{\varphi(x)} , \qquad \forall \varphi \in \mathcal{D}^{\beurou}(\reals^{d}) . 
	\end{equation}
If $g\neq 0$, one readily obtains that \eqref{eq:cond S-asymptotic} must hold uniformly for $x$ in compact subsets.

We now apply Theorem \ref{t:structuraltheorem} to find the structure of the S-asymptotic behavior of ultradistributions. 
	\begin{theorem}
		\label{t:structSasymp}
		Let $\Gamma\subset \mathbb{R}^{d}$ be a convex cone such that $\operatorname*{int}\Gamma$ is non-empty and let $\omega$ be a weight function satisfying \eqref{TIWFeq} and \eqref{eq:cond S-asymptotic}. Then, $f \in \mathcal{D}^{\prime \beurou}(\reals^{d})$ has S-asymptotic behavior with respect to $\omega$ on $\Gamma$ if and only if for each $R > 0$ there exist $f_{\alpha} \in C(\reals^d)$, $\alpha \in \naturals^{d}$, such that
			$$f = \sum_{\alpha \in \naturals^{d}} f_{\alpha}^{(\alpha)} \quad \text{on } \Gamma_{R} , $$
		 the limits
			$$ \underset{x \in \Gamma_{R}}{\lim_{ |x| \rightarrow \infty}} \frac{f_{\alpha}(x)}{ \omega(x)}, \qquad \alpha \in \naturals^{d}, $$
		exist, and for some $\ell > 0$ (for all $\ell > 0$) it holds that
			$$ 				 \sup_{\alpha \in \naturals^d, \ x \in \Gamma_{R}} \frac{\ell^{|\alpha|}{M_{\alpha} |f_{\alpha}(x)|}}{\omega(x)} < \infty.
$$
	\end{theorem}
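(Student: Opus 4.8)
The plan is to derive both implications from the structure theorem for $\dot{\mathcal{B}}^{\prime \beurou}_{\omega}$ (Theorem \ref{t:structuraltheorem}) together with its translation characterization (Theorem \ref{t:BdotequivNQA}). Throughout write $c(x) := \lim_{h \in \Gamma, |h| \to \infty} \omega(x+h)/\omega(h)$, which exists by \eqref{eq:cond S-asymptotic}. For the \emph{if} part I would argue by dominated convergence. Fix $\varphi \in \mathcal{D}^{\beurou}(\reals^d)$ with $\operatorname{supp}\varphi \subset B(0,\rho)$ and take $R$ so large that $h + \operatorname{supp}\varphi \subset \Gamma_{R}$ for all $h \in \Gamma$ with $|h|$ large; using $f = \sum_{\alpha} f_{\alpha}^{(\alpha)}$ on $\Gamma_{R}$,
\[ \frac{\ev{f(x+h)}{\varphi(x)}}{\omega(h)} = \sum_{\alpha \in \naturals^{d}} (-1)^{|\alpha|} \int_{\reals^{d}} \frac{f_{\alpha}(x+h)}{\omega(x+h)} \, \frac{\omega(x+h)}{\omega(h)} \, \varphi^{(\alpha)}(x) \, dx . \]
Here $f_{\alpha}(x+h)/\omega(x+h) \to L_{\alpha}$ and $\omega(x+h)/\omega(h) \to c(x)$ pointwise, the latter ratio being uniformly bounded for $x \in \operatorname{supp}\varphi$ by \eqref{TIWFeq}, while the growth bound forces $|L_{\alpha}| \leq C \ell^{-|\alpha|} M_{\alpha}^{-1}$ and supplies an integrable, $\alpha$-summable majorant. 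Interchanging the limit with the sum and the integral then yields the S-asymptotic behavior with $g = \sum_{\alpha} L_{\alpha} c^{(\alpha)}$; the summability over $\alpha$ holds in the Beurling case because $\varphi \in \mathcal{D}^{(M_p)}(\reals^d)$ and in the Roumieu case because the growth bound is available for every $\ell$.

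For the \emph{only if} part I first identify the limit. Extending both \eqref{eq:cond S-asymptotic} and \eqref{eq:Sasymp} from $\Gamma$ to the fattened cones $\Gamma_{R}$ (writing $h = \gamma + b$ with $|b| \leq R$ and cancelling $\omega(\gamma+b)/\omega(\gamma)$ against the translate $T_{-b}$, while using that the convergence in \eqref{eq:Sasymp} is uniform on bounded subsets of $\mathcal{D}^{\beurou}(\reals^d)$ by Banach--Steinhaus), the translation equation gives that $c$ is continuous with $c(x+y) = c(x)c(y)$, hence $c(x) = e^{\beta \cdot x}$ for some $\beta \in \reals^d$, and that $T_{-y} g = c(y) g$ for all $y$; since $\operatorname{int}\Gamma \neq \emptyset$ this forces $g = C e^{\beta \cdot x}$ for a constant $C$. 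Next I regularize the weight: with $\chi \in \sss^{(M_p)}_{(A_p)}(\reals^d)$ suitably normalized, $\widetilde{\omega} := \omega * \chi \in \mathcal{E}^{\beurou}(\reals^d)$ satisfies, by \eqref{TIWFeq}, the bounds $\widetilde{\omega} \asymp \omega$ and $|\widetilde{\omega}^{(\alpha)}| \leq C \ell^{|\alpha|} M_{\alpha} \, \omega$, as well as $\widetilde{\omega}(\cdot + h)/\omega(h) \to \kappa e^{\beta \cdot x}$ and $\widetilde{\omega}/\omega \to \kappa$ on $\Gamma_{R}$ for some $\kappa > 0$. Setting $G := (C/\kappa)\widetilde{\omega}$, we obtain a single continuous density with $G/\omega \to C$ on $\Gamma_{R}$ and $T_{-h} G/\omega(h) \to g$. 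Finally, using $(M.3)'$ I select a conical cutoff $\eta \in \mathcal{E}^{\beurou}(\reals^d)$ with bounded ultradifferentiable derivatives, $\eta \equiv 1$ on $\Gamma_{R}$ and $\operatorname{supp}\eta \subset \Gamma_{R'}$.

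I then claim $\eta(f - G) \in \dot{\mathcal{B}}^{\prime \beurou}_{\omega}$. Since $\eta(f-G) \in \mathcal{D}^{\prime \beurou}(\reals^d)$, by Theorem \ref{t:BdotequivNQA} it suffices to verify $T_{-h}[\eta(f-G)]/\omega(h) \to 0$ in $\mathcal{D}^{\prime \beurou}(\reals^d)$ as $|h| \to \infty$ in every direction. Testing against $\varphi$, the pairing vanishes unless $h$ lies in a fixed enlarged cone (by $\operatorname{supp}\eta$), and for such $h$ it equals $\ev{(f-G)(\cdot + h)}{\eta(\cdot + h)\varphi}/\omega(h)$; as $\{\eta(\cdot + h)\varphi\}_{h}$ is bounded, hence relatively compact, in $\mathcal{D}^{\beurou}_{B(0,\rho)}(\reals^d)$ and $T_{-h}(f-G)/\omega(h) \to 0$ uniformly on such sets, this tends to $0$. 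Theorem \ref{t:structuraltheorem} now yields $\eta(f-G) = \sum_{\alpha} u_{\alpha}^{(\alpha)}$ with $u_{\alpha}/\omega \to 0$ and the stated growth bound. Because $\eta \equiv 1$ on $\Gamma_{R}$, putting $f_{0} := u_{0} + \eta G$ and $f_{\alpha} := u_{\alpha}$ for $\alpha \neq 0$ gives $f = \sum_{\alpha} f_{\alpha}^{(\alpha)}$ on $\Gamma_{R}$ with $f_{0}/\omega \to C$, $f_{\alpha}/\omega \to 0$ for $\alpha \neq 0$, and the required growth bound.

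The main obstacles both occur in the \emph{only if} part: upgrading the S-asymptotic relation from $\Gamma$ to the fattened cones $\Gamma_{R}$ with the \emph{same} limit (which rests on the uniformity of \eqref{eq:Sasymp} over bounded families of test functions and on the identifications $c = e^{\beta \cdot x}$ and $g = C e^{\beta \cdot x}$), and constructing the regularized weight $\widetilde{\omega}$ and the conical cutoff $\eta$ within the class $\ast$ with derivative bounds compatible with multiplication on $\dot{\mathcal{B}}^{\prime \beurou}_{\omega}$; once these are in place, the passages through Theorems \ref{t:BdotequivNQA} and \ref{t:structuraltheorem} are routine.
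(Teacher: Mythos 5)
Your proof is correct and follows essentially the same route as the paper's: identify the limit $g = Ce^{\beta\cdot x}$, subtract a suitable multiple of the weight so that the difference has S-asymptotic behavior with limit zero, localize to the cone, and then pass through Theorems \ref{t:BdotequivNQA} and \ref{t:structuraltheorem}. The paper delegates the identification of $g$ to \cite{P-S-V} and the reduction from a general cone to $\reals^d$ to the technique of \cite{N-V-asympboundMAE}, whereas you spell these steps out explicitly; your regularization $\widetilde{\omega} = \omega * \chi$ also quietly guarantees that the zeroth component is continuous, a point the paper's direct use of $C\omega$ glosses over.
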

	
	\begin{proof}
		The conditions are clearly sufficient. To show necessity, let us first verify that there is a constant $C$ such that $f_0= f - C \omega$ has S-asymptotic behavior with respect to $\omega$ on $\Gamma$ with limit 0. By \cite[Proposition~1.2, p.~12]{P-S-V}, there is $y\in\mathbb{R}^{d}$ such that the limits \eqref{eq:cond S-asymptotic} equal $e^{y\cdot x}$ for each $x\in\mathbb{R}^{d}$ and $g(x)=C e^{y\cdot x}$. Thus, $f_0$ with this $C$ satisfies the requirement. We further consider the case $\Gamma = \reals^{d}$, the general case can be reduced to this one by applying the same technique as in the proof of \cite[Theorem 3.2]{N-V-asympboundMAE}. Notice that $\omega$ is $(p!)$-admissible (see  Remark \ref{remark-1}). As $(M.1)$ and $(M.3)'$ imply that $p! \prec M_p$, we have that $M_p$ and $\omega$ satisfy Assumption \ref{assumption}. We obtain $f_{0} \in \dot{\mathcal{B}}^{\prime \beurou}_{\omega}$  by Theorem \ref{t:BdotequivNQA}. Hence, the desired structure of $f$ follows from Theorem \ref{t:structuraltheorem}. 
	\end{proof}
	
	\begin{remark} \label{remark-2}
		In \cite{structquasiultra}, the last two named authors obtained structural theorems for the so-called quasiaymptotic behavior of  ultradistributions upon reducing their analysis to the S-asymptotic behavior via an exponential substitution. Hence, as a direct consequence of Theorem \ref{t:structSasymp}, we obtain that the assumptions $(M.1)$, $(M.2)$, and $(M.3)$ in \cite{structquasiultra} can be everywhere relaxed to $(M.1)$, $(M.2)'$, and $(M.3)'$.
	\end{remark}

\end{document}